\newtheorem{thm}{Theorem}[section]
\newtheorem{THM}{Theorem}
\newtheorem{COR}[THM]{Corollary}
\newtheorem{cor}[thm]{Corollary}
\newtheorem{prop}[thm]{Proposition}
\newtheorem{lemma}[thm]{Lemma}
\theoremstyle{definition}
\newtheorem{remark}[thm]{Remark}
\newtheorem{example}[thm]{Example}
\DeclareMathOperator{\Pic}{Pic}
\DeclareMathOperator{\Aut}{Aut}
\DeclareMathOperator{\rank}{rank}
\DeclareMathOperator{\sing}{sing}
\DeclareMathOperator{\id}{id}
\DeclareMathOperator{\PSL}{PSL_2}
\DeclareMathOperator{\SL}{SL_2}
\DeclareMathOperator{\Aff}{Aff}
\DeclareMathOperator{\Id}{Id}
\DeclareMathOperator{\Res}{Res}
\DeclareMathOperator{\Alb}{Alb}
\DeclareMathOperator{\indet}{indet}
\def\C{\mathbb C}
\def\Z{\mathbb Z}
\def\P{\mathbb P}
\def\sl{\mathfrak{sl}_2}
\def\X0{X^{\circ}}
\def\Y0{Y^{\circ}}
\begin{document}

\title[Transversely projective foliations]
{Representations of quasiprojective groups,\\ flat connections and\\ transversely projective foliations}

\author[F.Loray]{Frank LORAY}
\address{IRMAR, Universit\'e de Rennes 1, Campus de Beaulieu, 35042 Rennes Cedex, France}
\email{frank.loray@univ-rennes1.fr}

\author[J.V. Pereira]{Jorge Vit\'{o}rio PEREIRA}
\address{IMPA, Estrada Dona Castorina, 110, Horto, Rio de Janeiro,
Brasil}
\email{jvp@impa.br}

\author[F. Touzet]{Fr\'ed\'eric TOUZET}
\address{IRMAR, Universit\'e de Rennes 1, Campus de Beaulieu, 35042 Rennes Cedex, France}
\email{frederic.touzet@univ-rennes1.fr}

\subjclass{} 
\keywords{Foliation, Transverse Structure, Birational Geometry, Flat Connections, Irregular Singular Points, Stokes Matrices}

\date{\today}

\thanks{The first author is supported by CNRS, and
the second author, by CNPq and FAPERJ}

\begin{abstract}
The main purpose of this paper is to provide a structure theorem for codimension one singular transversely projective foliations
on projective manifolds. To reach our goal, we firstly extend  Corlette-Simpson's classification of rank two representations
of fundamental groups of quasiprojective manifolds by dropping the hypothesis of quasi-unipotency at infinity.
Secondly  we establish an analogue classification for rank two flat meromorphic connections.
In particular, we prove that a rank two flat meromorphic connection with irregular singularities having non trivial Stokes
projectively factors through a connection over a curve.
\end{abstract}

\maketitle

\setcounter{tocdepth}{1}
\sloppy
\tableofcontents

\section{Introduction}

Let $X$ be a smooth projective manifold over $\C$. A (holomorphic singular) codimension one foliation $\mathcal F$ on $X$
is defined by a non zero rational $1$-form $\omega$ satisfying Frobenius integrability condition $\omega\wedge d\omega=0$.
The foliation is said transversely projective if there are rational $1$-forms $\alpha,\beta$ on $X$ such that
the $\mathfrak{sl}_2$-connection defined on the trivial vector bundle $X\times \C^2$ by
\begin{equation}\label{LaConnexion}
Z\mapsto \nabla Z=dZ+AZ\ \ \ \text{with}\ \ \ A=\begin{pmatrix}\alpha&\beta\\ \omega&-\alpha\end{pmatrix}
\end{equation}
is {\bf flat}: $dA+A\cdot A=0$. Note that this flatness property is intrisically attached to the foliation and does not depend on the choice of the defining rational one form $\omega$. Indeed, if $\mathcal F$ is defined by ${\omega}'=a\omega$ where $a$ is any rational function not identically zero, the corresponding connexion matrix is $$A'=\begin{pmatrix}\alpha+\frac{1}{2}\frac{da}{a}&\frac{1}{a}\beta\\ {\omega}'&-(\alpha +\frac{1}{2}\frac{da}{a})\end{pmatrix}.$$
  This definition, equivalent to \cite{Scardua}, extends to the singular case the classical definition \cite{Godbillon} for smooth foliations.
Outside  the polar divisor of the  connection matrix $A$, the foliation $\mathcal F$ admits distinguished germs
of first integrals taking values in $\P^1$,
well defined up to left composition with elements of $\Aut(\P^1) = \PSL(\C)$. Precisely,
given a local basis of $\nabla$-horizontal sections $B=(b_{ij})\in \mathrm{SL}_2(\mathcal O(U))$
on some open set $U$, i.e. satisfying $dB+A\cdot B=0$, the ratio $\varphi:=b_{21}/b_{22}$ provides 
such a local first integral for $\mathcal F$; changing to another basis $B\cdot B_0$
will have the effect of composing $\varphi$ with a Moebius transformation.

Transversely projective foliations play a singular role in the study of codimension one foliations.
They are precisely those foliations whose Galois groupo\"{\i}d in the sense of Malgrange is small
(see \cite{Casale2, Malgrange2}).
They often occur as exceptions or counter examples \cite{BrunellaMinimal,LinsNeto,FredTransvProj} and played
an important role in our study of foliations with numerically trivial canonical bundle \cite{Croco3}.
For these foliations, one can define a monodromy representation by considering analytic continuation
of distinguished germs of first integrals, making the tranverse pseudo-group into a group (see \cite{Frankpseudo});
it coincides with the (projectivization of the) monodromy of the flat connection $\nabla$.

The goal of this paper is to provide a structure theorem for transversely projective foliations
in the spirit of \cite{CerveauSad,CamachoScardua} and what has been done recently 
 in \cite{GaelJorge} for transversely affine foliations. 

In fact, we mainly work with the connection $\nabla$ defined by  (\ref{LaConnexion}) up to birational bundle transformation.
When it has at worst  regular singularities,  $\nabla$ is characterized by its monodromy representation up to birational bundle transformations \cite{Deligne}.
One of the main ingredients that goes into the proof of our structure theorem  is an extension of Corlette-Simpson's classification of rank two representations of quasiprojective fundamental groups  \cite{Corlette-Simpson} which we
now proceed to explain. 

\subsection{Rank-two representations of quasiprojective fundamental groups}\label{sec:IntroCorletteSimpson}
Let $\X0$ be a  quasiprojective manifold and consider $ X$ a projective compactification of $\X0$
with boundary equal to a  simple normal crossing divisor $D$. If $D_i$ is an irreducible component of $D$ then {by a small loop around $D_i$ we mean a loop} $\gamma : S^1 \to \X0$ that extends to a smooth map $\overline \gamma : \overline {\mathbb D} \to  X$ which intersects $D$ transversely on an unique {smooth point of $D_i$}.
  A representation $\rho : \pi_1(\X0,x) \to \SL(\C)$ is
quasi-unipotent at infinity if  for every irreducible component $D_i$ of $D$ and every small loop $\gamma$ around $D_i$, the conjugacy class of $\rho(\gamma)$ is quasi-unipotent (eigenvalues are roots of the unity).

A representation $\rho : \pi_1(\X0,x) \to \SL(\C)$  {projectively} factors through an orbifold  $Y$ if there exists
a morphism $f:\X0 \to Y$ and a representation $\tilde \rho : \pi_1^{\mathrm{orb}}(Y,f(x)) \to \PSL(\C)$ such that the
diagram
\begin{center}
\begin{tikzpicture}
  \matrix (m) [matrix of math nodes,row sep=2em,column sep=4em,minimum width=2em]
  {
    \pi_1(\X0,x ) & \SL(\C)  \\
      {\pi_1}^{\mathrm{orb}}(Y,f(x) ) & \PSL(\C) \\};
  \path[-stealth]
    (m-1-1) edge node [above] {$\rho$} (m-1-2)
            edge node [left] {$f_*$} (m-2-1)
    (m-1-2) edge node [right] {\text{proj}} (m-2-2)
    (m-2-1)  edge node [below] {$\tilde \rho$} (m-2-2)  ;
\end{tikzpicture}
\end{center}
is commutative (we refer to \cite[Section 2.4]{MM} for the definition and general properties of orbifolds).

A polydisk Shimura modular orbifold is a  quotient $\mathfrak H$ of  a polydisk $\mathbb D^n$ by a  group of the form $U(P,\Phi)$ where
$P$ is a projective module of  rank two over the ring of integers $\mathcal O_L$ of a  totally imaginary  {quadratic extension}
$L$ of a totally real number field $F$; $\Phi$ is a skew hermitian form on $P_L=P \otimes_{\mathcal O_L} L$; and $U(P,\Phi)$  is the subgroup of the $\Phi$-unitary group  $U(P_L,\Phi)$
consisting of elements which preserve $P$. This group acts naturally  on $\mathbb D^n$ where $n$ is half the number of
embeddings $\sigma : L \to \C$ such that the quadratic form $\sqrt{-1} \Phi(v,v)$ is indefinite. The aforementioned action is explained in details  in \cite[\S 9]{Corlette-Simpson}.
 Note that there is one tautological representation
 \[
 \pi_1^{orb}(\mathbb D^n /\mbox{U}(P,\Phi) ) \simeq \mbox{SU}(P,\Phi)/\{\pm \Id\} \hookrightarrow \PSL(\C) \, ,
 \] 
 which induces for each embedding $\sigma: L \to \C$ one tautological representation  $\pi_1^{orb}(\mathbb D^n /\mbox{U}(P,\Phi) ) \to \PSL(\C)$.
The quotients $\mathbb D^n /\mbox{U}(P,\Phi)$ are always  quasiprojective
orbifolds, and when $[L:\mathbb Q] > 2n$ they are projective (i.e. proper/compact) orbifolds.
The archetypical examples satisfying $[L:\mathbb Q] =2n$ are the Hilbert modular orbifolds, which are quasiprojective but
not projective.  We refer again to \cite{Corlette-Simpson} for a thorough discussion and point out that our definition of tautological representations
differs slightly from loc. cit. as they consider polydisk Shimura modular stacks instead of orbifolds and consequently their representations
take values in $\SL(\C)$. Here we are lead to consider representations with values in $\PSL(\C)$ because $\pm \Id \in \mbox{SU}(P,\Phi)$
acts trivially on $\mathbb D^n$.

\begin{thm}[Corlette-Simpson]\label{ThCorletteSimpson}
Suppose that $\X0$ is a quasiprojective manifold and $\rho : \pi_1(\X0, x) \to \SL(\C)$ is
a Zariski dense representation  {which is quasi-unipotent at infinity}.
 Then $\rho$ projectively factors through
 \begin{enumerate}
 \item a morphism $f : \X0 \to Y$ to an orbicurve $Y$ (orbifold of dimension one); or
 \item a morphism $f : \X0 \to \mathfrak H$ to a polydisk Shimura modular orbifold $\mathfrak H$.
 \end{enumerate}
In the latter case, the representation actually projectively factors through one of the tautological representations of $\mathfrak H$.
\end{thm}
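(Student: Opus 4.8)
The statement is due to Corlette and Simpson, so I would essentially reproduce the strategy of \cite{Corlette-Simpson}. The starting point is the dichotomy \emph{rigid} versus \emph{non-rigid}: since $\rho$ is Zariski dense in $\SL(\C)$ it is irreducible, hence reductive, and one may consider the germ at $[\rho]$ of the moduli space of representations having the same (quasi-unipotent) conjugacy classes around each boundary component $D_i$. Suppose first that $[\rho]$ is \emph{not} isolated in this relative character variety. A non-trivial deformation can be fed into non-abelian Hodge theory for quasiprojective manifolds (Simpson, Mochizuki): either it is visible on the Higgs side as a one-parameter family of Higgs bundles, or, after a Galois twist and the choice of an archimedean or non-archimedean place at which the twisted representation becomes unbounded, one obtains an equivariant pluriharmonic map to $\H$ or to a Bruhat--Tits tree. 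In all cases the Gromov--Schoen, Jost--Zuo and Eyssidieux factorization theorems force such a harmonic/period map to have rank $\le 1$, producing a fibration $f : \X0 \to Y$ onto an orbicurve through which $\rho$ projectively factors; this is alternative (1). So we may assume henceforth that $\rho$ is \textbf{rigid}.

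Next I would extract an arithmetic structure from rigidity. By the principle that a rigid semisimple representation is \emph{integral}, in the quasiprojective setting — and this is exactly where quasi-unipotency at infinity is used, since it allows the logarithmic extension across $D$ of the underlying Higgs/de Rham data — the representation $\rho$ is conjugate to one valued in $\SL(\mathcal O_L)$ for a number field $L$. Rigidity also forces $\rho$, and by Galois invariance of rigidity each conjugate $\rho^\sigma$, to underlie a polarized $\C$-variation of Hodge structure; since we are in rank two and $\SL$ preserves the standard symplectic form on $\C^2$, the Hodge symmetry pins the Hodge numbers to $(1,1)$ (up to a Tate twist), and the field of moduli together with the polarization make $L$ a totally imaginary quadratic extension of a totally real field $F$, carrying a natural skew-Hermitian form $\Phi$ on $L^2$. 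Setting $P = \mathcal O_L^{\,2}$, each $\rho^\sigma$ then takes values in $\mathcal U(P,\Phi)$.

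Now assemble the period maps. For each embedding $\sigma : L \to \C$ the conjugate $\rho^\sigma$ is either unitary (the form $\sqrt{-1}\,\Phi$ is definite at $\sigma$) or underlies a rank-two weight-one $\C$-VHS, in which case Hodge theory provides a $\rho^\sigma$-equivariant period map from the universal cover of $\X0$ to $\H$. Taking the product over the $n$ places of $L$ at which $\Phi$ is indefinite yields a holomorphic map $\widetilde{\X0} \to \H^n$ equivariant for $\prod_\sigma \rho^\sigma$, hence a morphism $f : \X0 \to \H^n/\mathcal U(P,\Phi) = \mathfrak H$ to a polydisk Shimura modular orbifold; by construction $\rho$ projectively factors through $f$ composed with the tautological representation of $\mathfrak H$ attached to the distinguished embedding $L \hookrightarrow \C$. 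If the image of $f$ happens to be one-dimensional one lands again in case (1), with $Y$ the normalization of that image endowed with its orbifold structure; otherwise case (2) holds verbatim.

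The main obstacle is the non-rigid case: turning a non-trivial deformation into an honest fibration over an \emph{orbicurve} is the deep step, requiring existence and regularity of equivariant pluriharmonic maps to buildings, the rank-one criterion, and the non-trivial passage from such a harmonic map to an algebraic fibration with the prescribed behaviour over the boundary $D$. A further, more technical difficulty is to certify that the arithmetic data produced in the rigid case has exactly the stated shape — that $L/F$ is CM with $F$ totally real, that $\Phi$ is skew-Hermitian on the rank-two projective $\mathcal O_L$-module $P$, and that $\H^n/\mathcal U(P,\Phi)$ is the claimed quasiprojective (or projective, when $[L:\mathbb Q]>2n$) orbifold — all while carefully tracking the descent from $\SL(\C)$-valued representations to the $\PSL(\C)$-valued tautological ones.
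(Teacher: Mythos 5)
The paper gives no proof of this statement: it is quoted verbatim as a theorem of Corlette and Simpson and simply referenced to \cite{Corlette-Simpson}, which is exactly the route you take. Your sketch of the rigid/non-rigid dichotomy, the harmonic-map factorization through an orbicurve in the non-rigid case, and the integrality/VHS argument producing the polydisk Shimura modular orbifold in the rigid case is a faithful outline of the argument in that reference, so there is nothing to compare against within this paper.
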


Although their hypothesis is natural, as representations  coming from geometry (Gauss-Manin connections) are automatically {quasi-unipotent} at infinity, Corlette and Simpson asked in \cite[Section 12.1]{Corlette-Simpson}
what happens if this assumption is dropped. Our first main result answers this question.

\begin{THM}\label{THM:A}
Suppose that $\X0$ is a quasiprojective manifold and $\rho : \pi_1(\X0, x) \to \SL(\C)$ is
a Zariski dense representation which is not quasi-unipotent at infinity.
Then $\rho$ projectively factors through a morphism $f : \X0 \to Y$ to an
orbicurve  $Y$.
\end{THM}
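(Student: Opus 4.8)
The plan is to deduce the theorem from Theorem~\ref{ThCorletteSimpson} by removing the quasi-unipotency hypothesis from the latter, the key point being that the Shimura alternative in Corlette--Simpson's classification is incompatible with failure of quasi-unipotency at infinity. Assume, for a contradiction, that $\rho$ does not projectively factor through an orbicurve. Since $\rho$ is not quasi-unipotent at infinity, there are an irreducible component $D_0$ of the boundary divisor $D$ and a small loop $\gamma_0$ around $D_0$ such that $\rho(\gamma_0)$ is not quasi-unipotent. A non-semisimple element of $\SL(\C)$ has both eigenvalues in $\{\pm 1\}$, and an element of $\SL(\C)$ whose eigenvalues are roots of unity is quasi-unipotent; hence $\rho(\gamma_0)$ is semisimple with an eigenvalue $\lambda_0$ that is not a root of unity, and a fortiori $\lambda_0^{2}$, the ratio of the two eigenvalues of $\bar\rho(\gamma_0)\in\PSL(\C)$, is not a root of unity either.

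The heart of the matter is to show that, under these hypotheses, $\rho$ projectively factors through one of the tautological representations of a polydisk Shimura modular orbifold. This means revisiting the proof of Theorem~\ref{ThCorletteSimpson} and checking that quasi-unipotency is inessential for the dichotomy it establishes. Quasi-unipotency is used there only through the non-abelian Hodge correspondence \emph{with parabolic structure}, which presupposes quasi-unipotent local monodromies; one replaces it by Mochizuki's correspondence between semisimple local systems on $\X0$ and stable tame parabolic Higgs bundles on $(X,D)$ with vanishing parabolic Chern classes, valid with no constraint on the local monodromies (the parabolic weights along $D_0$ being merely allowed to become irrational). Since $\rho$ is Zariski dense it is irreducible, hence corresponds to such a rank-two Higgs bundle, and the remaining Corlette--Simpson analysis then carries over: either the Higgs field is, up to birational modification, pulled back from a curve --- in which case $\rho$ projectively factors through an orbicurve, against our assumption --- or $\rho$ is rigid, hence, as in Corlette--Simpson, defined over the ring of integers of a number field; organising its Galois conjugates as they do, one realises its monodromy as an arithmetic lattice $\mathcal U(P,\Phi)$ acting on a polydisk $\H^{n}$, Margulis arithmeticity being available because the relevant product of copies of $\mathrm{SU}(1,1)$ and of $\mathrm{SU}(2)$ has real rank at least $2$ (the degenerate case $n=1$ giving once more an orbicurve). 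Thus $\rho$ is, projectively, the pull-back under a morphism $f:\X0\to\H^{n}/\mathcal U(P,\Phi)$ of one of the tautological representations $\tilde\rho$, with $n\geq 2$.

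It remains to contradict this. Tautological representations of a polydisk Shimura modular orbifold $\mathfrak H=\H^{n}/\mathcal U(P,\Phi)$ are quasi-unipotent at infinity: by reduction theory the stabiliser in $\mathcal U(P,\Phi)$ of a cusp of $\H^{n}$ is, up to finite index, unipotent in each non-compact factor $\mathrm{SU}(1,1)$ and trivial in each compact factor $\mathrm{SU}(2)$, while the isotropy group of an interior point is finite; composing with any tautological representation therefore yields quasi-unipotent elements of $\PSL(\C)$. Now extend $f$ to a rational map from $X$ to a smooth projective compactification of $\mathfrak H$ with simple normal crossing boundary and resolve its indeterminacy: the small loop $\gamma_0$ around $D_0$ is then carried by $f_*$ into the local fundamental group at the generic point of the image of the strict transform of $D_0$, that is, into a cusp stabiliser or into a finite isotropy group. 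Consequently $\bar\rho(\gamma_0)=\tilde\rho(f_*(\gamma_0))$ is quasi-unipotent in $\PSL(\C)$, which forces $\lambda_0^{2}$, hence $\lambda_0$, to be a root of unity --- a contradiction. Therefore $\rho$ projectively factors through an orbicurve.

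The step expected to be the main obstacle is the second one: pushing the Corlette--Simpson dichotomy through in the absence of quasi-unipotency. The delicate points there are the replacement of the parabolic non-abelian Hodge correspondence by Mochizuki's theory of tame harmonic bundles --- so that irrational parabolic weights along the non-quasi-unipotent boundary components are permitted --- and the recovery of the polydisk Shimura structure, in particular the arithmeticity of the monodromy, with no a priori control on the local behaviour of $\rho$ at those components. The rest, including the reduction-theory verification that the tautological representations are quasi-unipotent at infinity and the extension and resolution of $f$, is standard.
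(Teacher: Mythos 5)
Your overall strategy---rerun the Corlette--Simpson classification without the quasi-unipotency hypothesis and then rule out the polydisk Shimura alternative because its tautological representations are quasi-unipotent at infinity---is not the paper's route, and more importantly it has a genuine gap at its center. The entire weight of your argument rests on the sentence asserting that, once the parabolic non-abelian Hodge correspondence is replaced by Mochizuki's correspondence for tame harmonic bundles, ``the remaining Corlette--Simpson analysis then carries over.'' That is precisely the question Corlette and Simpson left open in \cite[Section 12.1]{Corlette-Simpson}, and it is not a routine verification. At least one concrete sub-step demonstrably breaks: the passage from rigidity to arithmeticity (needed to produce the lattice $\mathcal U(P,\Phi)$ and hence the Shimura structure) uses that the traces of the local monodromies are sums of roots of unity, hence algebraic integers, to get integrality of the representation; when some local eigenvalue $\lambda_0$ is not a root of unity this input disappears, and one would have to re-prove integrality (or the ``non-integral implies factors through a curve'' alternative via harmonic maps to Bruhat--Tits trees) in the non-quasi-unipotent setting. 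Likewise ``rigid'' must be re-defined relative to boundary data that are no longer rational, and the deformation-theoretic argument re-examined. None of this is carried out, so the proposal reduces Theorem \ref{THM:A} to an unproved strengthening of Theorem \ref{ThCorletteSimpson}. (Your first and third steps are fine: the reduction to a semisimple non-quasi-unipotent local monodromy is correct, and the tautological representations are indeed quasi-unipotent at infinity, as recorded in Corollary \ref{COR:B}.)

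The paper deliberately avoids this machinery and uses the failure of quasi-unipotency as a positive, elementary tool. One takes a maximal connected boundary divisor $E$ whose components all have local monodromy $\neq\pm\Id$ and at least one non-quasi-unipotent local monodromy. Proposition \ref{P:topsolv} shows the monodromy near $E$ is solvable; Proposition \ref{P:naocontrai}, a residue computation for a logarithmic rank-one subconnection of the Deligne extension, shows that if the intersection matrix of $E$ were negative definite the local monodromy would be forced to be quasi-unipotent, so the intersection matrix is not negative definite. Theorem \ref{T:criteriofactor} then combines Malcev's theorem (to produce a finite cover with at least three disjoint copies of $E$), the Hodge index theorem, and Totaro's fibration criterion (Theorem \ref{T:Totaro}) to construct the fibration over an orbicurve and factor the representation. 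If you want to salvage your approach you would need to actually establish the non-quasi-unipotent Corlette--Simpson dichotomy, which is a research project in itself rather than a step in a proof.
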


Our method to deal with representations which are not quasi-unipotent at infinity is considerably more elementary than
the sophisticated arguments needed to deal with the quasi-unipotent case. The non quasi-unipotency allows us to prove the
existence of effective divisors   with topologically
trivial normal bundle at the boundary of $\X0$. We then use Malcev's Theorem combined with a result of Totaro about the existence of
fibrations to produce the factorization.

\medskip

\begin{COR}\label{COR:B}
Suppose that $\X0$ is a quasiprojective manifold and $\rho : \pi_1(\X0, x) \to \SL(\C)$ is
a  representation which is not virtually abelian. Then $\rho$ projectively factors through
\begin{enumerate}
\item a morphism $f : \X0 \to Y$ to an orbicurve $Y$; or
\item a morphism $f : \X0 \to \mathfrak H$ to a polydisk Shimura modular orbifold $\mathfrak H$
equipped with one of its tautological representations.
\end{enumerate}
In case {\rm (2)}, the representation is Zariski dense and  quasi-unipotent at infinity.
\end{COR}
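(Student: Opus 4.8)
The plan is to reduce Corollary \ref{COR:B} to the already-established results by a case analysis on the size of the Zariski closure $G$ of $\rho(\pi_1(\X0,x))$ inside $\SL(\C)$. Since $\rho$ is not virtually abelian, $G$ cannot be a torus nor finite, and the classification of algebraic subgroups of $\SL(\C)$ leaves (up to conjugacy and finite index) essentially three possibilities for the image: either $G$ normalizes a Borel (equivalently, $\rho$ preserves a point on $\P^1$, or a pair of points), in which case after passing to an index-two subgroup $\rho$ takes values in $\Aff(\C)$; or $G$ normalizes but does not centralize a maximal torus, which again lands in the affine/dihedral case; or $G$ contains $\SL(\C)$, i.e. $\rho$ is Zariski dense. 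The first two cases are handled by the cited factorization results for representations into $\Aff(\C)$ \cite{Bartoloetal, GaelJorge}: after projectivizing, an affine representation either is virtually abelian (excluded) or projectively factors through a morphism to an orbicurve, giving alternative (1). Here one must be mildly careful that passing to a finite-index subgroup of $\pi_1$ corresponds to passing to a finite étale cover of $\X0$, which is again quasiprojective, and that the resulting factorization descends; this is routine and already present in the affine literature.

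In the remaining case $\rho$ is Zariski dense in $\SL(\C)$, and we split according to whether $\rho$ is quasi-unipotent at infinity. If it is, Theorem \ref{ThCorletteSimpson} of Corlette--Simpson applies directly and yields either alternative (1) (factorization through an orbicurve) or alternative (2) (factorization through a polydisk Shimura modular orbifold equipped with a tautological representation). If $\rho$ is \emph{not} quasi-unipotent at infinity, then Theorem \ref{THM:A} applies and yields alternative (1). In either sub-case we land in the stated dichotomy, and the last sentence of the corollary is automatic: in case (2) the tautological representation of a polydisk Shimura modular orbifold is visibly Zariski dense (its image is a lattice-type subgroup $\mathcal U(P,\Phi)/\{\pm\Id\}$ acting on $\mathbb H^n$), and such arithmetic representations are quasi-unipotent at infinity because the cusps of Shimura varieties have unipotent (parabolic) local monodromy — this is part of the structure theory of these orbifolds and is recorded in \cite{Corlette-Simpson}.

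One subtlety worth flagging in the write-up: the hypotheses of Theorems \ref{ThCorletteSimpson} and \ref{THM:A} are about representations into $\SL(\C)$ that are \emph{Zariski dense} in $\SL(\C)$, whereas ``not virtually abelian'' only forces the image to be large, not necessarily Zariski dense. The case analysis above is precisely the bridge: one must check that the non-Zariski-dense, non-virtually-abelian $\SL(\C)$-representations are exactly those whose projectivization is (virtually) reducible, hence covered by the $\Aff(\C)$ theory. This uses the elementary fact that a subgroup of $\PSL(\C)$ is either contained in the image of $\Aff(\C)$ (fixing a point of $\P^1$ or swapping a pair), contained in a finite group $A_4,S_4,A_5$, or Zariski dense; finite image is virtually abelian (indeed virtually trivial), so it is excluded. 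I expect this bookkeeping — correctly organizing the subgroup cases, tracking the finite-index covers, and citing the precise form of the affine factorization theorems — to be the only real work; there is no new geometric input, as the two hard theorems have already done it.

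For the quasi-unipotency claim in case (2) one can alternatively argue abstractly: the relevant local monodromies at the boundary of $\mathbb H^n/\mathcal U(P,\Phi)$ are conjugate into the stabilizer of a cusp, which in a unitary group of this type is virtually unipotent, so their images under any tautological representation are quasi-unipotent; pulling back along $f$ preserves this. No new computation is required here either.
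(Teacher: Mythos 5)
Your proposal is correct and follows essentially the same route as the paper, which derives Corollary \ref{COR:B} precisely by combining Theorem \ref{ThCorletteSimpson}, Theorem \ref{THM:A}, and the $\Aff(\C)$ factorization results of \cite{Bartoloetal, GaelJorge} according to the Zariski closure of the image; your case analysis (noting that the normalizer-of-torus and finite cases are excluded by the non-virtually-abelian hypothesis, leaving only the Borel and Zariski-dense cases) is exactly the bookkeeping the paper leaves implicit. The final assertion also follows, as you note, simply because conclusion (2) can only arise from the Zariski-dense, quasi-unipotent branch.
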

\begin{proof}
The case of Zariski dense representations is covered by Theorem \ref{ThCorletteSimpson} and Theorem \ref{THM:A}.   Non virtually abelian representations in  $\Aff(\C)$ factors through an orbicurve according to \cite{Bartoloetal},  \cite[Theorem 4.1]{GaelJorge} and references therein. 

\end{proof}
\subsection{Riccati foliations}\label{IntroRiccati}
Since we are interested in $\PSL$ rather than $\SL$, we will essentially work with
the projective connection associated to (\ref{LaConnexion}), or more geometrically the foliation $\mathcal H$
induced by $\nabla$-horizontal sections on the total space of the projective bundle $X\times\P^1$.
In an affine chart, 
$\mathcal H$ is defined by the ``Riccati'' pfaffian equation
$$dz+\omega-2\alpha z-\beta z^2=0.$$
More generally, a {\bf Riccati foliation} over a projective manifold $X$ consists of a pair $(\pi:P \to X,\mathcal H)=(P,\mathcal H)$ where
$\pi:P\to X$ is a locally trivial  $\P^1$-fiber bundle  in the Zariski topology (i.e. $P$ is the total space of the projectivization $\mathbb P(E)$ of a rank two vector bundle
$E$) and $\mathcal H$ is a codimension one foliation on $P$  which is transverse to a general fiber of $\pi$.  If the context is clear,
we will omit the $\mathbb P^1$-bundle $P$ from the notation and call $\mathcal H$ a Riccati foliation.

The foliation $\mathcal H$  is defined by the projectivization of horizontal sections of a (non unique) flat meromorphic connection $\nabla$
on $E$: it is the phase portrait of the projective connection $\mathbb P(E,\nabla)$.
The connection $\nabla$ is uniquely determined by $\mathcal H$ and by its trace on $\det(E)$.
We say that the Riccati foliation $\mathcal H$ has {\bf regular singularities} if it can be lifted to a meromorphic connection $\nabla$
with regular singularities (see \cite[chap.II, Definition 4.2]{Deligne}),
and {\bf irregular} if not.

We will say that a Riccati foliation $( P, \mathcal H)$  over $X$ factors through a projective manifold $X'$
if  there exists  a Riccati foliation $( \pi':P'\to X', \mathcal H')$ over $X'$,
and  rational maps  $\phi : X \dashrightarrow X'$ , $\Phi: P \dashrightarrow P'$   such that $\pi' \circ \Phi = \phi \circ \pi$, $\Phi$ has degree one when restricted to a general fiber of $P$,  and  $\mathcal H = \Phi^* \mathcal H'$.

Our second main result describes the structure of Riccati foliations having irregular singularities.

\begin{THM}\label{THM:C}
Suppose that $X$ is a projective manifold, and  $(P,\mathcal H)$ is a Riccati foliation over $X$.
If $\mathcal H$ is irregular then at least one of the following assertions holds true.
\begin{enumerate}
\item Maybe after passing to a (possibly ramified) two-fold covering, the Riccati foliation
 $\mathcal H$ is defined by a closed rational $1$-form.
\item The Riccati foliation $(P,\mathcal H)$ factors through a curve.
\end{enumerate}
\end{THM}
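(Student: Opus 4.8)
The plan is to reduce Theorem \ref{THM:C} to Theorem \ref{THM:A} by passing from the Riccati foliation to its monodromy/Stokes data. First I would analyze the local structure of $\mathcal H$ at an irregular singular point. Irregularity means that over some irreducible component $D_i$ of the polar divisor $D \subset X$ of a lift $\nabla$, the connection cannot be gauged to a regular one; restricting to a generic transverse disk $\Delta$ meeting $D_i$, the restricted connection $\nabla|_\Delta$ has an irregular singularity at the origin, hence a nontrivial formal/convergent decomposition and possibly nontrivial Stokes matrices. The key dichotomy is: either the formal monodromy together with the Stokes operators generate a group whose Zariski closure in $\mathrm{PSL}_2(\mathbb C)$ is \emph{not} reducible (not fixing a point or a pair of points on $\P^1$), or it is. In the reducible case one expects the connection to be, up to a two-fold cover, reducible as an $\mathfrak{sl}_2$ object — the eigendirections are globally defined after pulling back along the double cover that separates the two Stokes directions — and then $\mathcal H$ is defined by a closed rational $1$-form (this is case (1)); the point is that an irregular $\P^1$-connection which is an extension of rank-one connections, or a pullback of such under $z \mapsto z^2$, integrates to a closed form. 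In the irreducible case, one hopes to apply the factorization machinery.

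The heart of the argument, and the main obstacle, is to promote the \emph{global} monodromy representation $\rho : \pi_1(X \setminus D, x) \to \mathrm{SL}_2(\mathbb C)$ (or rather $\mathrm{PSL}_2$) of the regular part to the hypotheses of Theorem \ref{THM:A}. The crucial observation is that if $\mathcal H$ is irregular with nontrivial Stokes at $D_i$, then the small loop $\gamma$ around $D_i$ has $\rho(\gamma)$ equal to the product of the formal monodromy and the Stokes factors, and a nontrivial Stokes factor forces $\rho(\gamma)$ to \emph{not} be quasi-unipotent — more precisely, a nontrivial unipotent Stokes multiplier composed with a formal monodromy that is semisimple with eigenvalues whose ratio is not a root of unity (which is exactly the irregular non-resonant situation) yields a non-quasi-unipotent element, as its eigenvalues are not roots of unity. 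Hence $\rho$ is \emph{not quasi-unipotent at infinity}. If moreover $\rho$ is Zariski dense, Theorem \ref{THM:A} gives a projective factorization $f : X \setminus D \to Y$ through an orbicurve, which by Deligne's rigidity of regular connections (quoted in the introduction) means the regular part of $\mathcal H$ is pulled back from a Riccati foliation on (a compactification of) $Y$; one then extends the factorization across $D$ using properness of $X$ and the structure of the orbifold map, getting case (2).

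It remains to dispose of the case where $\rho$ is \emph{not} Zariski dense. Then the Zariski closure $G$ of $\rho(\pi_1(X\setminus D))$ in $\mathrm{PSL}_2(\mathbb C)$ is a proper algebraic subgroup: either finite (in which case $\mathcal H$ has finite monodromy, a two-fold cover trivializes the $\P^1$-bundle section structure and the foliation becomes transversely... in fact one gets a closed rational $1$-form after pullback — case (1)), or contained in the affine group $\mathrm{Aff}(\mathbb C)$ fixing a point of $\P^1$ (an invariant section, giving a sub-connection; using the affine factorization results cited from \cite{Bartoloetal, GaelJorge}, or directly: an invariant section $\sigma : X \dashrightarrow P$ makes $\mathcal H$ a logarithmic/closed-form foliation up to pullback by the residue construction — again case (1), possibly after the two-fold cover needed when instead a \emph{pair} of sections is invariant), or infinite dihedral (fixing an unordered pair of points, killed by the two-fold cover, reducing to the affine case — case (1)). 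I would organize this as: Step 1, reduce to a lift $\nabla$ and identify an irregular $D_i$; Step 2, case split on $\overline{\rho(\pi_1)}^{\mathrm{Zar}}$; Step 3, Zariski-dense case — check non-quasi-unipotency at infinity via Stokes, apply Theorem \ref{THM:A}, extend across $D$; Step 4, non-Zariski-dense cases — show each leads to a closed rational $1$-form after at most a two-fold cover.

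The step I expect to be the genuine obstacle is Step 3's extension of the factorization across the boundary divisor $D$: Theorem \ref{THM:A} factors the \emph{topological} representation through an orbicurve $Y$, but to conclude that the \emph{Riccati foliation} $(P,\mathcal H)$ — including its irregular behaviour — factors through a \emph{curve} (a projective curve, as in the statement), I need to realize $Y$'s smooth completion inside the picture, propagate the flat structure, and check that the meromorphic connection, not just its regular part, descends. This requires care with the interplay of Deligne's canonical extension, the orbifold structure at the branch points of $f$, and the fact that the polar/Stokes data of $\mathcal H$ along $D$ must come from the corresponding data on the target curve; a subtlety is components $D_i$ along which $\mathcal H$ is regular or even non-singular, which must be shown to map dominantly (not to points) under $f$ so that no information is lost. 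Handling this cleanly is where I anticipate spending most of the effort.
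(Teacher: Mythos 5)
Your proposed route collapses at its ``crucial observation''. The local monodromy around an irregular component is the product of the formal monodromy $\mathrm{diag}(e^{-i\pi\lambda},e^{i\pi\lambda})$ with the $2k$ unipotent Stokes factors, and nothing forces this product to have eigenvalues that are not roots of unity: a single pair $\left(\begin{smallmatrix}1&b\\0&1\end{smallmatrix}\right)\left(\begin{smallmatrix}1&0\\c&1\end{smallmatrix}\right)$ already has trace $2+bc$, which can be unipotent, elliptic of finite order, or anything else. The archetypal Euler equation $dz-\frac{z-x}{x^2}dx$ is irregular with nontrivial Stokes and yet has unipotent monodromy at $x=0$. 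So irregularity does \emph{not} imply failure of quasi-unipotency at infinity, and Theorem~\ref{THM:A} cannot be invoked this way. Worse, an irregular Riccati foliation can have reducible, abelian, or even trivial global monodromy while still not being defined by a closed $1$-form: by Lemma~\ref{lem:ClosedFormStokes}, near an irregular point the foliation is given by a closed meromorphic $1$-form if and only if the Stokes matrices are trivial, because otherwise the two formal invariant sections diverge and there is no actual invariant section. Hence your Step~4 (``reducible monodromy $\Rightarrow$ invariant section $\Rightarrow$ closed form, case (1)'') fails exactly in the interesting situation, and in that situation one must prove conclusion (2) with essentially no monodromy available to factor.

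This degenerate regime is where the paper spends all its effort, and it is absent from your outline. The actual proof first reduces to a surface and to reduced local models (Theorem~\ref{THM:ReductionRiccati}), then shows that a connected component $D=kD_{red}$ of the irregular divisor has zero self-intersection and torsion normal bundle with a trivialization of $\mathcal O_S(rD_{red})$ on the $k$-th infinitesimal neighborhood (Propositions~\ref{P:I20} and~\ref{P:flat}, the ``flat transverse coordinates''), and that the local monodromy around $D$ is virtually abelian and, when Stokes are nontrivial, factors through a $C^\infty$ fibration of a neighborhood (Proposition~\ref{P:monodromia irregular}). The case split is then on the size of the \emph{global} monodromy: if it is not virtually abelian one uses the covering/Hodge-index argument of Theorem~\ref{T:criteriofactor} plus Theorem~\ref{T:Totarobis}; if it is infinite cyclic one uses the residue formula and Hodge index to find disjoint proportional divisors and applies Theorem~\ref{T:Neeman}; if it is trivial one uses Lemma~\ref{L:finalpiece} to force the order of the normal bundle strictly below $k$ and again applies Neeman's infinitesimal criterion. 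Producing the fibration from the infinitesimal geometry of the irregular divisor, rather than from the representation, is the missing idea in your proposal.
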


The proof of Theorem \ref{THM:C}  relies on similar ideas as those used in the proof of Theorem \ref{THM:A}, on an infinitesimal criterion for the existence
of fibrations due to Neeman (Theorem \ref{T:Neeman}) and on the semi-local study of Riccati foliations
at a neighborhood of irregular components of the polar divisor (Section \ref{S:irregular}).

\subsection{Transversely projective foliations}
Our main goal, the description of the structure of transversely projective foliations, is achieved by combining Corollary \ref{COR:B} and Theorem \ref{THM:C}. To this end, recall that for a polydisk Shimura modular orbifold $\mathfrak H$
and any of its tautological representations $\rho$, Deligne extension of the associated local system provides a logarithmic flat connection; denote by $(\mathfrak H\times_\rho\P^1,\mathcal H_\rho)$ the induced Riccati foliation.

\begin{THM}\label{THM:D}
Let $\mathcal F$ be a codimension one transversely projective
foliation on a projective manifold $X$.
Then at least one of the following assertions
holds true.
\begin{enumerate}
\item There exists a generically finite Galois morphism $f: Y \to X$ such that $f^* \mathcal F$ is defined by a closed rational $1$-form.
\item There exists a rational dominant map $f: X \dashrightarrow S$ to a ruled surface $\pi:S\to C$, and a Riccati foliation
$\mathcal H$ defined on $S$ (i.e. over the curve $C$) such that $\mathcal F= f^* \mathcal H$.
\item There exists a polydisk Shimura modular orbifold $\mathfrak H$ and a rational map $f: X \dashrightarrow \mathfrak H\times_\rho\P^1$ towards one of its tautological Riccati foliations such that $\mathcal F= f^* \mathcal H_\rho$.
\end{enumerate}
\end{THM}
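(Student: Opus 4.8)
The plan is to replace $\mathcal F$ by an auxiliary Riccati foliation and then feed in Corollary~\ref{COR:B} and Theorem~\ref{THM:C}. Write $\mathcal F$ as the zero divisor of a rational $1$-form $\omega$ and, using the transverse projective structure, fix rational $1$-forms $\alpha,\beta$ making the $\sl$-connection $(\ref{LaConnexion})$ flat; let $(X\times\P^1,\mathcal H)$ be the induced Riccati foliation. The rational section $\sigma\colon X\to X\times\P^1$, $x\mapsto(x,0)$, satisfies $\sigma^*(dz+\omega-2\alpha z-\beta z^2)=\omega$, so $\sigma(X)$ is not $\mathcal H$-invariant and $\mathcal F=\sigma^*\mathcal H$. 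The gain is that any structural description of $\mathcal H$ descends to $\mathcal F$ through $\sigma$: if $\mathcal H$ is defined by a closed rational $1$-form $\eta$ on $X\times\P^1$ (possibly after a covering of $X$), then the polar components of $\eta$ are $\mathcal H$-invariant, hence avoid $\sigma(X)$, so $\sigma^*\eta$ is a closed rational $1$-form defining $\mathcal F$; and if $(X\times\P^1,\mathcal H)$ factors through a Riccati foliation $(P',\mathcal H')$ via $\Phi$, then $\mathcal F=(\Phi\circ\sigma)^*\mathcal H'$.

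First I would treat the case where $\mathcal H$ is irregular by invoking Theorem~\ref{THM:C}. If, after a two-fold (hence Galois) cover $f\colon Y\to X$, $\mathcal H$ pulls back to a Riccati foliation defined by a closed rational $1$-form, then by the previous paragraph $f^*\mathcal F$ is defined by a closed rational $1$-form: conclusion~(1). If instead $(X\times\P^1,\mathcal H)$ factors through a curve $C$, the corresponding Riccati foliation lives on a $\P^1$-bundle over $C$, i.e. a ruled surface $S$, and $\mathcal F=(\Phi\circ\sigma)^*\mathcal H'$ with $\Phi\circ\sigma\colon X\dashrightarrow S$ dominating $C$. If this map is dominant onto $S$ we are in conclusion~(2); if not, its image is a multisection of $S\to C$, so $\mathcal F$ is a fibration onto a curve and is therefore defined by a closed rational $1$-form, giving conclusion~(1).

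Next, suppose $\mathcal H$ has regular singularities. Then, by Deligne, the underlying projective connection is determined up to birational bundle transformation by its monodromy $\rho\colon\pi_1(X\setminus D)\to\SL(\C)$, where $D$ is (after blowing up) a normal crossing polar divisor. If $\rho$ is virtually abelian, pass to the finite Galois cover $f\colon Y\to X$ killing a normal finite-index subgroup on which $\rho$ is abelian; since an abelian subgroup of $\SL(\C)$ lies in a maximal torus or in a maximal unipotent subgroup, over $Y$ the connection either splits into rank-one pieces carrying closed logarithmic $1$-forms or is globally unipotent, and in both cases the Riccati foliation---hence $f^*\mathcal F$---is defined by a closed rational $1$-form: conclusion~(1). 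If $\rho$ is not virtually abelian, apply Corollary~\ref{COR:B}. When $\rho$ projectively factors through an orbicurve $Y$, the tautological $\PSL(\C)$-representation of $\pi_1^{\mathrm{orb}}(Y)$, Deligne-extended on the underlying curve, yields a Riccati foliation on a ruled surface $S$; its pullback to $X\setminus D$ is a regular projective connection with monodromy $\rho$, so Deligne's characterization identifies $\mathcal H$ birationally with this pullback and $\mathcal F=(\Phi\circ\sigma)^*\mathcal H_S$ --- conclusion~(2), or conclusion~(1) again if the relevant map collapses onto a curve. Finally, when $\rho$ projectively factors through one of the tautological representations of a polydisk Shimura modular orbifold $\mathfrak H$, the same Deligne-extension/uniqueness argument identifies $\mathcal H$ birationally with the pullback by $f\colon X\dashrightarrow\mathfrak H$ of the tautological Riccati foliation $(\mathfrak H\times_\rho\P^1,\mathcal H_\rho)$, so that $\mathcal F=(\Phi\circ\sigma)^*\mathcal H_\rho$ --- conclusion~(3).

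The delicate point is the regular case, and within it the passage from a projective factorization of $\rho$ to a factorization of the Riccati foliation: one has to construct the Riccati foliation over the orbicurve, resp. over $\mathfrak H$, from $\tilde\rho$, taking care of the orbifold structure and choosing the logarithmic (Deligne) extension so that the singularities remain regular and the monodromy matches on the nose, before Deligne's theorem can be applied to obtain the birational identification; keeping track of the indeterminacy loci of $f$ and of $\Phi\circ\sigma$, and cleanly reducing the degenerate sub-cases in which these maps collapse onto a curve to conclusion~(1), accounts for most of the remaining bookkeeping. By contrast, the irregular case is comparatively soft once Theorem~\ref{THM:C} is available.
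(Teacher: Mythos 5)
Your proposal is correct and follows essentially the same route as the paper: form the auxiliary Riccati foliation $(P,\mathcal H)$ with the non-invariant section $\sigma$ satisfying $\mathcal F=\sigma^*\mathcal H$, apply Corollary~\ref{COR:B} together with Theorem~\ref{THM:C} (i.e.\ the trichotomy of Theorem~\ref{THM:E}) to $(P,\mathcal H)$, and descend each alternative through $\sigma$ (resp.\ $\Phi\circ\sigma$), with the degenerate sub-case where $\Phi\circ\sigma$ collapses onto a curve handled exactly as you indicate. The extra detail you supply on the regular case (Deligne extension, rigidity of regular-singular Riccati foliations with given monodromy, and the virtually abelian case) is what the paper delegates to its Section on Riccati foliations and to the proof of Theorem~\ref{THM:E}.
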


\begin{remark}In the third item, we note (see \cite{ErwanFred}) that, after blowing-up the ambient,  $\mathcal H_\rho$
(resp. $\mathcal F$) is locally defined at singular points by a $1$-form of the type
$$\sum_{i=1}^k\lambda_i\frac{dx_i}{x_i}, \ \ \ 1\le k\le n,\ \lambda_i\in\mathbb R_{>0}$$
for local coordinates $(x_1,\ldots,x_n)$ (where $n$ is the dimension of the ambient space).
In particular, for instance for $X$ a surface, if $\mathcal F$ has (maybe after blowing-up) 
a hyperbolic singular point, a saddle-node or a non linearizable saddle, then we are in the first two items.
\end{remark}

There are previous results on the subject \cite{Scardua}  and on the
neighboring subject of transversely affine foliations  (\cite{CamachoScardua}, \cite{CerveauSad}, \cite{GaelJorge}).
With the exception of \cite{GaelJorge}, all the other works impose strong restrictions
on the nature of the singularities of the foliation. Our only hypothesis is the projectivity of the ambient manifold.

Theorem \ref{THM:D} also answers
a question left open in \cite{GaelJorge}. There, a similar  classification for transversely affine foliations is established for foliations on projective manifolds with zero first Betti number. Theorem \ref{THM:D}
gives the analogue classification  for arbitrary projective manifolds, showing that the hypothesis on the first Betti number is not necessary.

\subsection{Flat meromorphic $\mathfrak{sl}_2$-connections}

A meromorphic rank $2$ connection $(E,\nabla)$ on a projective manifold $X$
is the datum of a rank $2$ vector bundle $E$ equipped with a $\mathbb C$-linear morphism  $\nabla:E\to E\otimes\Omega^1_X(D)$
satisfying Leibniz rule 
$$\nabla(f\cdot s)=f\cdot \nabla(s)+df\otimes s\ \ \ \text{for any section }s\text{ and function }f.$$
Here $D$ is the polar divisor of the connection $\nabla$. 
The connection $\nabla$  is {\it flat} when the curvature vanishes, that is $\nabla\cdot\nabla=0$,
meaning that it has no local monodromy outside the support of $D$; we can therefore define its monodromy representation.
{\it Throughout the text, all connections are assumed to be flat.}

When  $\det(E)=\mathcal O_X$ and the trace connection $\mathrm{tr}(\nabla)$ is the trivial connection on $\mathcal O_X$, 
we say that $(E,\nabla)$ is a {\it $\mathfrak{sl}_2$-connection}. In particular, its monodromy representation 
takes values into $\SL(\C)$.

We will say that any two connections $(E,\nabla)$ and $(E',\nabla')$ are {\it birationally equivalent} when
there is a birational bundle transformation $\phi:E\dashrightarrow E'$ that conjugates the two operators $\nabla$ and $\nabla'$. In other words, $\mathcal O_X(\star D)\otimes_{\mathcal O_X}E\simeq \mathcal O_X(\star D)\otimes_{\mathcal O_X}E'$ for some reduced divisor $D$ in $X$ 
and the respective differential operators coincide.
Keep in mind that the polar divisor might be not the same for $\nabla$ and $\nabla'$. The connection $(E,\nabla)$ is called regular if local $\nabla$-horizontal sections have moderate growth near the polar divisor 
$(\nabla)_\infty$; equivalently, the restriction of $(E,\nabla)$ to a general (complete) curve $C\subset X$
is birationally equivalent to a connection having only simple poles (see \cite[chap.II, Definition 4.2]{Deligne} for details). When the polar divisor $(\nabla)_\infty$ has only normal crossings, then regularity is equivalent
to be birationally equivalent to a logarithmic connection, in particular having only simple poles. 
A connection $(E,\nabla)$ is said irregular if it is not regular.

We will say that $(E,\nabla)$ and $(E',\nabla')$ are {\it projectively equivalent} if
the induced $\P^1$-bundles coincide $\P (E)=\P (E')$, and if moreover $\nabla$ and $\nabla'$ induce the same projective connection $\P (\nabla)=\P (\nabla')$; equivalently, they induce the same Riccati foliation on the total space of $\P (E)$. This is equivalent to say that $(E',\nabla')=(E,\nabla)\otimes(L,\delta)$
for some flat meromorphic rank $1$ connection $(L,\delta)$ on $X$.

We provide a structure theorem for connections up to the combination of projective and birational equivalence.
In order to settle our result, we note that any meromorphic rank $2$ connection is (projectively birationally) equivalent
to a $\sl$-connection (see remark \ref{R:generalriccati}).
A combination of Corollary \ref{COR:B} and Theorem \ref{THM:C} yields:

\begin{THM}\label{THM:E}Let $(E,\nabla)$ be a flat meromorphic $\sl$-connection 
on a projective manifold $X$.
Then at least one of the following assertions holds true.
\begin{enumerate}
\item There exists a generically finite Galois morphism $f: Y \to X$ such that $f^*(E,\nabla)$ is  projectively birationally equivalent to one of the following connections defined on the trivial bundle:
$$\nabla=d+\begin{pmatrix}\omega&0\\ 0&-\omega\end{pmatrix}\ \ \ \text{or}\ \ \ d+\begin{pmatrix}0&\omega\\ 0&0\end{pmatrix}$$
with $\omega$ a rational closed $1$-form on $X$.
\item There exists a rational map $f: X \dashrightarrow C$ to a curve and a meromorphic connection $(E_0,\nabla_0)$ on $C$
such that $(E,\nabla)$ is projectively birationally equivalent to $f^*(E_0,\nabla_0)$.
\item The $\mathfrak{sl}_2$-connection $(E,\nabla)$
has at worst regular singularities and there exists a rational map $f: X \dashrightarrow \mathfrak H$ which projectively factors the monodromy  through one of the tautological representations of a polydisk Shimura modular orbifold $\mathfrak H$. In particular,
the monodromy representation of $(E,\nabla)$ is quasi-unipotent at infinity, rigid, and
Zariski dense.
\end{enumerate}
\end{THM}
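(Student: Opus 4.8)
The plan is to reduce the classification of flat meromorphic rank $2$ connections to the already-established classification of Riccati foliations (Theorem~\ref{THM:C}) and the factorization of $\SL(\C)$-representations (Corollary~\ref{COR:B}), then to carefully bootstrap the information obtained at the level of the projective connection $\P\nabla$ back to the linear connection $\nabla$ itself. First I would normalize: as observed just before the statement, $(E,\nabla)$ is projectively birationally equivalent to an $\sl$-connection, so we may assume $\det E=\mathcal O_X$ and $\mathrm{tr}(\nabla)=0$, and the monodromy is a representation $\rho:\pi_1(\X0)\to\SL(\C)$ where $\X0=X\setminus D$. The associated Riccati foliation $\mathcal H$ on $\P E$ is then the object to which Theorem~\ref{THM:C} and Corollary~\ref{COR:B} apply.

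The argument then splits according to the nature of $\rho$ and of the singularities. \emph{Case (a): $\rho$ is virtually abelian.} After passing to the finite Galois cover $f:Y\to X$ trivializing the finite-index abelian image (Malcev/standard), the pulled-back monodromy is abelian, hence reducible; this forces, up to birational gauge, either a diagonal form $d+\mathrm{diag}(\omega,-\omega)$ or, in the non-semisimple abelian case, an upper-triangular unipotent form $d+\left(\begin{smallmatrix}0&\omega\\0&0\end{smallmatrix}\right)$. The closedness $d\omega=0$ is exactly the integrability/flatness condition in these normal forms, so we land in item~(1). Here the only subtlety is checking that a flat connection with abelian (hence diagonalizable or unipotent) monodromy is birationally equivalent to one of these explicit trivial-bundle models — this is a Deligne-type argument in the regular case, and in the irregular case one invokes the irregular part of Theorem~\ref{THM:C}(1), which already produces a closed rational $1$-form after a two-fold cover.

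\emph{Case (b): $\rho$ is not virtually abelian but $\mathcal H$ is irregular.} Then Theorem~\ref{THM:C} applies to $\mathcal H$: either $\mathcal H$ is defined by a closed rational $1$-form after a two-fold cover — which feeds back into item~(1) by the normal-form discussion above — or $(P,\mathcal H)$ factors through a curve, i.e.\ there is $f:X\dashrightarrow C$ and a Riccati foliation on $C$ pulling back to $\mathcal H$. Lifting this from the projective to the linear level: the Riccati foliation on $C$ comes from a meromorphic connection $(E_0,\nabla_0)$ on $C$ (every Riccati foliation over a curve does), and $f^*(E_0,\nabla_0)$ has the same $\P^1$-bundle and the same projective connection as $(E,\nabla)$, hence is projectively birationally equivalent to it — this is item~(2).

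\emph{Case (c): $\rho$ is not virtually abelian and $\mathcal H$ is regular.} Now Corollary~\ref{COR:B} applies to $\rho$: it projectively factors either through an orbicurve $Y$, or through a polydisk Shimura modular orbifold $\mathfrak H$ with a tautological representation. In the orbicurve case, the factoring morphism $X\dashrightarrow Y$ together with the (regular, hence Deligne-rigidified) connection on $Y$ realizes $(E,\nabla)$ as a projective birational pullback from a curve — again item~(2), after noting that orbifold data on $Y$ is absorbed by a ramified cover and a birational gauge. In the Shimura case, Corollary~\ref{COR:B} moreover guarantees $\rho$ is Zariski dense and quasi-unipotent at infinity; rigidity follows from Corlette–Simpson (tautological representations of $\mathfrak H$ are rigid), and regularity of $\mathcal H$ plus quasi-unipotency lets us use Deligne's theorem to identify $(E,\nabla)$ up to birational gauge with the Deligne extension pulled back along $X\dashrightarrow\mathfrak H$ — this is item~(3).

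The main obstacle I expect is the passage from the \emph{projective} conclusions (about $\mathcal H$, $\P\nabla$, or $\rho$ into $\PSL(\C)$) back to the \emph{linear} connection: one must check that a projective factorization of $\mathcal H$ through a curve or through $\mathfrak H$ lifts to a projective-birational equivalence $(E,\nabla)\sim f^*(E_0,\nabla_0)$, which requires controlling the trace/determinant twist — precisely the $\{\pm1\}$-ambiguity encoded by the rank $1$ logarithmic connection $(L,\delta)$ discussed before the statement, and which is what forces the two-fold covers appearing in item~(1) and (implicitly) elsewhere. A secondary technical point is ensuring, in the regular non-virtually-abelian orbicurve case, that the resulting connection on the curve can be taken meromorphic with the correct polar behavior; this is handled by Deligne's canonical extension together with the observation that orbifold structure on an orbicurve is killed by a finite base change.
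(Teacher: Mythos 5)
Your proposal is correct and follows essentially the same route as the paper: reduce to an $\sl$-connection, apply Corollary \ref{COR:B} when the connection is regular and Theorem \ref{THM:C} when it is irregular, convert the closed-$1$-form conclusion into the explicit trivial-bundle models via Proposition \ref{prop:RiccClosedForm}, and absorb the residual rank-one determinant twist $(L,\zeta)$ by a two-fold cover --- exactly the point you flag as the main obstacle and exactly how the paper handles it. The only (harmless) deviation is organizational: you split off the virtually abelian case first, and in that case an irregular connection need not satisfy conclusion (1) of Theorem \ref{THM:C} --- it may only factor through a curve, landing you in item (2) rather than item (1) --- but since the theorem is a disjunction this does not affect correctness.
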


In particular, when $(E,\nabla)$ is irregular, only the former two cases occur. As mentioned in the abstract, the connection  projectively factors through a curve whenever it has non trivial Stokes. 

\begin{remark}
As we are working in the meromorphic setting, it is no more relevant to consider orbicurves instead of curves in item (2).
\end{remark}

\subsection{Structure of the paper}
The paper is divided in two parts, with the first independent of the second. In the first part,  Sections \ref{S:Fibration} and \ref{S:Factor}, we recall some results on the existence of fibrations which will be used throughout the paper, and present the proof of Theorem \ref{THM:A}. In this first part we avoided using  foliation theory aiming at a wider audience. The second part deals with the irregular case and is organized as follows.
Section \ref{S:basic} presents foundational  results about Riccati foliations, most of them borrowed from \cite{RH} and \cite{GaelJorge}. In particular, we show how to reduce the general problem to the surface case. Section \ref{S:polar} describes the local structure of a Riccati foliation along its polar divisor (on a surface);
we state there, in Theorem \ref{THM:ReductionRiccati}, a  reduction of singularities in the spirit of Sabbah's ``good formal model'' for meromorphic connections by using blowing-ups and ramified covers; the proof of this technical result is postponed to Section \ref{SecTransvProj},
and it uses tools from the theory of transversely projective foliations, like classification of their reduced singular points following  
\cite{BerthierTouzet,FredTransvProj}.  Section \ref{S:irregular} analyzes the structure of reduced Riccati foliations 
over surfaces 
at a neighborhood of a connected component of its irregular divisor, showing in particular  the existence of flat tranverse coordinates. This will be essential to produce the fibration in the absence of rich monodromy.
Section \ref{S:structure} contains the proofs of Theorems \ref{THM:C}  and  \ref{THM:E}.
Section \ref{SecTransvProj} deals with transversely projective foliations, including 
the proof of  Theorem \ref{THM:D}, and the fact that it is actually equivalent to our structure result 
on $\sl$-connections.  Section \ref{SecExamples} presents a series of examples
underlining the sharpness of our results. Finally in an appendix we prove a result reminiscent of Sabbah's good formal models in the context of Riccati foliations.

\subsection{Thanks} The authors warmly thank anonymous referees for pointing out several
uncorrect points and for giving many suggestions improving the presentation of the paper.

\section{Existence of fibrations}\label{S:Fibration}

In this section we collect three results about the existence of fibrations on projective manifolds which will be used in the sequel.
For the  proof of Theorem \ref{THM:A}  all we will need is Theorem \ref{T:Totaro} below, which is  due to  Totaro \cite{Totaro}. Toward the end of the paper (proof of Theorem \ref{THM:C}), we will
make use of the two other  results below.

\begin{thm}\label{T:Totaro}
Let $X$ be a projective manifold and
 $D_1, D_2, D_3$ $3$ connected effective divisors which are pairwise disjoint and whose
Chern classes lie in a line inside of  $H^2(X, \mathbb R)$. Then there exists a non constant morphism
$f: X\to C$  to a smooth curve $C$ with connected fibers which contracts every divisor numerically proportional to $D_1$  to a point. In particular, the $D_i$'s are rational multiple of fibers of $f$.
\end{thm}

The original proof studies the restriction map $H^1(X,\mathbb Q) \to H^1(\hat{D}_1,\mathbb Q)$, where $\hat D_1$ is the disjoint union
of desingularizations of the irreducible components of $D_1$.
When it is injective, this map leads to a divisor linearly equivalent to zero in the span of $D_2, \ldots, D_r$
which defines the  fibration. Otherwise, the fibration is constructed as a quotient of the Albanese map
of $X$. An alternative proof, based on properties of some auxiliary foliations is given in \cite{jvpJAG}.
It goes as follows:  given two divisors with proportional Chern classes, one constructs a logarithmic $1$-form with poles on
these divisors and purely imaginary periods. The induced foliation, although not by algebraic leaves in general, admits
a non-constant real first integral. Comparison of  the leaves of two of these foliations coming from three pairwise disjoint
divisors with proportional Chern classes reveals
that they are indeed the same foliation. The proportionality factor of the corresponding two logarithmic $1$-forms gives, after Stein factorization, the
sought fibration. For details,  see respectively \cite{Totaro, jvpJAG}.

In general, two disjoint divisors with same Chern classes are not fibers of a fibration. Indeed, if $L$
is a non-torsion flat line-bundle over a projective curve $C$ then the surface $S=\P (  L \oplus \mathcal O_C)$ admits two homologous disjoint curves, corresponding to the inclusions of $L$ and $\mathcal O_C$ in $L \oplus \mathcal O_C$, which are not fibers of a fibration. The point is that the normal bundle of these sections are $L^*$ and $ L$. Nevertheless, if the normal bundle of one of the effective divisors is torsion then we do have a fibration containing them as fibers. This example is extracted from \cite[p. 613]{Totaro}.

\begin{thm}\label{T:Totarobis}
Let $X$ be a projective manifold and
 $D_1, D_2$  be connected effective divisors which are pairwise disjoint and whose
Chern classes lie in a line  of  $H^2(X, \mathbb R)$.
If $\mathcal O_X(D_1)\vert_{D_1}$ is torsion then there exists a non constant map
$f: X\to C$  to a smooth curve $C$ with connected fibers which
maps the divisors $D_i$ to points.
\end{thm}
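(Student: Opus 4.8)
The plan is to reduce Theorem~\ref{T:Totarobis} to the already-established Theorem~\ref{T:Totaro} by producing, out of the two given divisors $D_1, D_2$, a \emph{third} effective divisor $D_3$, disjoint from both, with Chern class proportional to those of $D_1$ and $D_2$; then the three divisors $D_1, D_2, D_3$ satisfy the hypotheses of Theorem~\ref{T:Totaro} and we obtain the fibration $f : X \to C$ contracting all three, in particular $D_1$ and $D_2$. The source of the extra divisor is the torsion hypothesis on $\mathcal O_X(D_1)_{|D_1}$: if $\mathcal O_X(D_1)_{|D_1}$ has order $m$ in $\Pic(D_1)$, then $\mathcal O_X(mD_1)_{|D_1} \cong \mathcal O_{D_1}$.

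First I would examine the restriction exact sequence
\[
H^0(X, \mathcal O_X(mD_1)) \to H^0(D_1, \mathcal O_X(mD_1)_{|D_1}) \to H^1(X, \mathcal O_X((m-1)D_1)) \otimes (\cdots)
\]
or, more robustly, work on the formal/analytic neighborhood of $D_1$: since the conormal bundle is torsion, a standard argument (in the spirit of the ``formal principle'' for divisors with torsion normal bundle, or simply an explicit obstruction computation) shows that after possibly replacing $m$ by a multiple, the section $1 \in H^0(D_1, \mathcal O_{D_1})$ lifts to a section of $\mathcal O_X(mD_1)$ on a neighborhood of $D_1$, and in fact — because $D_1$ and $D_2$ are disjoint, and $X$ is projective so $D_1$ (being a fiber-like divisor with numerically trivial normal bundle) is ``nef but not big'' in its own class — one extracts a nonzero global section of $\mathcal O_X(mD_1)$ vanishing on $mD_1$ but not identically, whose zero divisor is therefore of the form $mD_1 + D_3$ with $D_3$ effective. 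Since the Chern class of $D_3$ equals $m$ times that of $D_1$ minus the class of $mD_1$... wait: more carefully, $D_3 \sim mD_1$ minus nothing — rather $\mathcal O_X(D_3) \cong \mathcal O_X(mD_1) \otimes \mathcal O_X(-mD_1)$ is trivial only if the section is $mD_1$ itself, so instead one takes the zero divisor $Z$ of a general section $s \in H^0(X, \mathcal O_X(mD_1))$: it is linearly equivalent to $mD_1$, hence has proportional Chern class, and one checks it can be chosen disjoint from $D_1$ (using that the linear system, restricted near $D_1$, is base-point free because the normal bundle is trivial there) and disjoint from $D_2$ after a further twist, or one simply notes $D_2$ already lies in the same line and applies Theorem~\ref{T:Totaro} to $D_1, D_2, Z$ once $Z \neq D_1, D_2$ and all three are pairwise disjoint.

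The delicate point is guaranteeing that the linear system $|mD_1|$ is nonempty with a member disjoint from $D_1$. The cleanest route: the torsion hypothesis gives that $h^0(X, \mathcal O_X(kmD_1)) $ grows, since the normal bundle being torsion forces $\mathcal O_X(D_1)$ to be effective with $D_1$ semiample-like behaviour in a formal neighborhood; invoking Totaro's own technique, one instead directly builds a closed logarithmic $1$-form. Indeed, when $\mathcal O_X(D_1)_{|D_1}$ is torsion of order $m$, there is a rational function germ near $D_1$ with divisor a multiple of $D_1$, giving a closed logarithmic $1$-form $\eta_1$ with polar divisor supported on $D_1$ and integral (hence purely imaginary, up to scaling) periods; similarly $D_2$, having Chern class proportional to $D_1$, carries a closed logarithmic $1$-form $\eta_2$ with poles on $D_2 \cup D_1$ and purely imaginary periods. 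Following the foliation-theoretic argument recalled after Theorem~\ref{T:Totaro}, the foliations defined by $\eta_1$ and $\eta_2$ share leaves because their real first integrals are compatible on the overlap, so $\eta_1/\eta_2$ is a global meromorphic function whose Stein factorization is the desired $f : X \to C$, with $D_1$ and $D_2$ among its fibers.

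I expect the main obstacle to be the passage from the torsion condition on the normal bundle — which is purely \emph{local} (formal/analytic) information near $D_1$ — to a \emph{global} object on $X$: a global section of some $\mathcal O_X(kD_1)$, or equivalently a global closed logarithmic $1$-form with the prescribed poles and periods. One must rule out, or handle, the phenomenon illustrated by the $\P(L \oplus \mathcal O_C)$ example, where two homologous disjoint sections fail to be fibers precisely because \emph{neither} normal bundle is torsion; here the torsion hypothesis on $D_1$ is exactly what breaks that obstruction, and the key computation is that the Mittag--Leffler-type obstruction to globalizing lives in $H^1(X, \mathcal O_X)$ or in a finite-dimensional space killed after passing to a multiple of $D_1$. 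Once a nonzero global section is in hand, the rest — producing $D_3$, checking disjointness and proportionality of Chern classes, and applying Theorem~\ref{T:Totaro} — is routine.
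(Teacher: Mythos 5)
There is a genuine gap, and it sits exactly where you flagged ``the delicate point.'' Both of your routes require promoting the torsion of the normal bundle $\mathcal O_X(D_1)_{|D_1}$ --- which is information on the \emph{first} infinitesimal neighborhood of $D_1$ --- into a global object: a second section of $\mathcal O_X(mD_1)$ (hence a disjoint divisor $D_3$), or a global closed logarithmic $1$-form with poles only on $D_1$. Neither exists in general. The obstruction to extending $1 \in H^0(D_1,\mathcal O_X(mD_1)_{|D_1})$ across higher infinitesimal neighborhoods is the content of Ueda theory, and it is \emph{not} ``killed after passing to a multiple of $D_1$'': there are smooth divisors with trivial normal bundle and nonvanishing Ueda class at some finite order, for which $h^0(X,\mathcal O_X(kD_1))=1$ for all $k$. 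This is precisely why Theorem \ref{T:Neeman} must assume $\mathcal O_X(mD)_{|(m+1)D}\simeq\mathcal O_{(m+1)D}$, a condition on the $(m+1)$-st neighborhood, rather than only torsion of the normal bundle. Your logarithmic-form variant fails for a related reason: a global closed logarithmic $1$-form with nonzero residue along $D_1$ and poles supported there cannot exist when $c_1(\mathcal O_X(D_1))\neq 0$; one is forced to put poles on $D_1+D_2$ with balancing residues, which yields only \emph{one} foliation, so there is no second form to compare with and no ratio $\eta_1/\eta_2$ to Stein-factorize. (Your ``$\eta_1$'' is only a germ near $D_1$ and cannot be divided into a global form.)

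The paper's proof avoids all of this by using $D_2$ at the very first step, not merely at the end. Since the Chern classes are proportional, one forms $\mathcal L=\mathcal O_X(a_1D_1-a_2D_2)\in\Pic^0(X)$ and dichotomizes. If $\mathcal L$ is torsion, some $\mathcal L^{\otimes m}$ is trivial, giving a rational function with zeros on $D_1$ and poles on $D_2$, whose Stein factorization is $f$. If $\mathcal L$ is not torsion, then because $D_1\cap D_2=\emptyset$ one has $\mathcal L_{|D_1}\simeq\mathcal O_{D_1}(a_1D_1)$, which is torsion by hypothesis; hence some nonzero power of $\mathcal L$ is a nontrivial element of the kernel of $\Pic^0(X)\to\Pic^0(D_1)$, so $H^1(X,\mathcal O_X)\to H^1(D_1,\mathcal O_{D_1})$ is not injective, and the Albanese-quotient argument from the proof of Theorem \ref{T:Neeman} produces the fibration contracting $D_1$ (and then $D_2$, being connected, disjoint from the fibers through $D_1$, and numerically proportional to them, lies in a single fiber). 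To repair your write-up you would need to replace the unproved globalization step by this dichotomy, or else supply the Ueda-type hypothesis that Theorem \ref{T:Neeman} genuinely requires.
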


A proof of this result is presented in Section \ref{S:Totarobis} below.
There is also  an infinitesimal variant of
Theorem \ref{T:Totaro} due to Neeman \cite[Article 2, Theorem 5.1]{Neeman} where instead of three divisors we have only one divisor with torsion normal bundle and  constraints on a infinitesimal neighborhood of order bigger
than the order of the normal bundle.
The formulation below is due to Totaro, see \cite[paragraph before the proof of Lemma 4.1]{Totaro2}, and  the proof we present  is an
adaptation of Neeman's proof. For an alternative proof see \cite[Section 3]{Leaves}.

\begin{thm}\label{T:Neeman}
Let $X$ be a projective manifold and
 $D$  be a connected effective divisor. Suppose that  $\mathcal O_X(D)\vert_{D}$ is torsion of order $m$. If
\[
\mathcal O_X(mD)\vert_{(m+1)D}  \simeq \mathcal O_{(m+1)D}
\]
then there exists a nonconstant morphism
$f: X\to C$  to a smooth curve $C$ with connected fibers which
maps the divisor $D$ to a point.
\end{thm}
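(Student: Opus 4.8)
\textbf{Proof plan for Theorem \ref{T:Neeman}.}
The plan is to follow Neeman's strategy, as adapted by Totaro: produce a large-dimensional space of sections of a line bundle supported near $D$ and use it to define a map to a curve. First I would set $L = \mathcal O_X(mD)$, which by hypothesis restricts trivially not only to $D$ but to the thickened scheme $(m+1)D$. The point of going up to order $m+1$ is that it allows one to lift the trivializing section of $L|_{(m+1)D}$ indefinitely: I would show by induction on $k$ that $L|_{kD}$ is trivial for every $k\ge 1$, using the exact sequences
\begin{equation*}
0 \to \mathcal O_X(-kD)_{|D}\otimes L_{|D} \to L_{|(k+1)D} \to L_{|kD} \to 0
\end{equation*}
together with the vanishing of the relevant obstruction in $H^1(D, \mathcal O_X(-kD)_{|D}\otimes L_{|D})$; here one uses that $L_{|D}=\mathcal O_D$ and that $\mathcal O_X(-kD)_{|D}$ runs through the (finitely many) torsion powers of the normal bundle, so that the obstruction group is controlled — the hypothesis at order $m+1$ is exactly what makes the first step of the induction go through, and subsequent steps follow by periodicity modulo $m$.

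Next I would pass to the formal/analytic completion $\hat X$ of $X$ along $D$ and conclude that $L$ is trivial there, hence $\mathcal O_X(mD)$ is represented by a divisor (the formal trivialization spreads out to a neighbourhood, by formal functions / Grothendieck existence applied on a suitable blow-up, or directly by Neeman's argument) supported away from $D$ but meeting it formally to high order. Concretely, the goal is to produce a nonconstant section of $\mathcal O_X(NmD)$ for $N\gg 0$, i.e. a rational function $g$ on $X$ whose divisor of poles is concentrated on $D$; equivalently one gets two divisors in $|NmD|$ through which a pencil, hence a rational map $X\dashrightarrow \mathbb P^1$, is defined, contracting $D$. Then Stein factorization of (a resolution of) this rational map yields the morphism $f:X\to C$ to a smooth curve with connected fibers mapping $D$ to a point. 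I would note that connectedness of $D$ is used to guarantee $D$ lands in a single fiber.

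The main obstacle — and the place where the order-$(m+1)$ hypothesis is genuinely needed rather than just order $m$ — is the lifting/convergence step: showing that the compatible system of trivializations of $L|_{kD}$ actually produces a section over an honest (not merely formal) neighbourhood, and then globalizing it to a rational section on all of $X$. Neeman handles this by a cohomological dimension count showing $h^0(X, \mathcal O_X(kmD))$ grows at least linearly in $k$, which forces the existence of a nonconstant such section; I would reproduce this estimate, the key input being that the restriction maps $H^0(X,\mathcal O_X((k+1)mD))\to H^0(X,\mathcal O_X(kmD))$ are eventually isomorphisms onto a fixed finite-dimensional space while the bundles themselves are nontrivial, forcing new sections to appear. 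Once the linear growth of $h^0$ is established, Totaro's version of the argument (or the Albanese-type argument in the proof of Theorem \ref{T:Totaro}) finishes the construction of $f$. I expect the cohomological bookkeeping — keeping track of the torsion powers of the normal bundle and the precise order of thickening at each inductive step — to be the only delicate part; the geometric conclusion is then formal.
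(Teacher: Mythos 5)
Your strategy diverges from the paper's and has a genuine gap at its first and most important step: the induction claiming that $\mathcal O_X(mD)_{|kD}$ is trivial for every $k$. Writing $N=\mathcal O_X(D)_{|D}$, the obstruction to extending a trivialization from $kD$ to $(k+1)D$ is a class $u_k\in H^1(D,N^{\otimes -k})$, and the hypothesis $\mathcal O_X(mD)_{|(m+1)D}\simeq\mathcal O_{(m+1)D}$ kills exactly the classes up to order $m+1$. While the obstruction \emph{groups} are periodic in $k$ modulo $m$, the obstruction \emph{classes} are not: $u_{m+1}$ lives in the same group as $u_1$, but there is no reason for it to vanish just because $u_1$ does. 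This is precisely the finite-type versus infinite-type dichotomy of Ueda theory, and the point of Neeman's theorem is that the conclusion holds without assuming the higher obstructions vanish. The later steps of your outline are also underpowered: the theorem on formal functions / Grothendieck existence requires a proper morphism contracting $D$, which is exactly what you are trying to construct (and $D$, having numerically trivial normal bundle, is not a priori contractible); and the claimed linear growth of $h^0(X,\mathcal O_X(kmD))$ is asserted rather than proved --- the natural maps between these spaces are inclusions given by multiplying by canonical sections, not restrictions, and producing even one extra section is the entire content of the theorem. Pursuing your route would amount to reproving Ueda's convergence theorem, which is far harder than what is needed.

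The paper's proof is much shorter and sidesteps all of this. It uses the hypothesis exactly once: chasing the commutative diagram built from $0\to\mathcal O_X\to\mathcal O_X(mD)\to\mathcal O_X(mD)\otimes\mathcal O_X/I^m\to 0$ and its order-$(m+1)$ analogue shows that the image of $1\in H^0(mD,\mathcal O_{mD})$ under the connecting map $\delta$ to $H^1(X,\mathcal O_X)$ dies in $H^1(D,\mathcal O_D)$. Then there is a dichotomy. If $\delta(1)=0$, the element $1$ lifts to a section of $\mathcal O_X(mD)$ not vanishing identically on $D$, so $h^0(X,\mathcal O_X(mD))\ge 2$ and $mD$ moves in a pencil. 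If $\delta(1)\neq 0$, then $H^1(X,\mathcal O_X)\to H^1(D,\mathcal O_D)$ is not injective, the Albanese varieties of the components of $D$ fail to dominate $\Alb(X)$, and the quotient Albanese map contracts $D$ with one-dimensional image; Stein factorization finishes. Your outline misses this second (Albanese) branch entirely, and it is essential: in that case one cannot expect to obtain the fibration from sections of multiples of $mD$ by a direct cohomological count.
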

\begin{proof}
Let $I = \mathcal O_X(-D)$ be the defining ideal of $D$. Consider the diagram
\begin{center}
\begin{tikzpicture}
 \matrix (m) [matrix of math nodes,row sep=2em,column sep=2.7em,minimum width=1em]
 {
  0 & \mathcal O_X & \mathcal O_X(mD) & \mathcal O_{X}(mD) \otimes \frac{\mathcal O_X}{I^m}  &  0 \\
  0 &  \mathcal O_D & \mathcal O_X(mD) \otimes \frac{\mathcal O_X}{I^{m+1}} & \mathcal O_{X}(mD) \otimes \frac{\mathcal O_X}{I^m}  &  0 \\};
 \path[-stealth]
   (m-1-1) edge (m-1-2)
 (m-1-2) edge (m-1-3)
  (m-1-3) edge (m-1-4)
  (m-1-4) edge (m-1-5)
   (m-2-1) edge (m-2-2)
   (m-2-2) edge (m-2-3)
   (m-2-3) edge  (m-2-4)
   (m-2-4) edge (m-2-5)
   (m-1-2) edge   (m-2-2)
   (m-1-3)     edge   (m-2-3)
  (m-1-4)  edge node [left] {$\wr$} (m-2-4)  ;
\end{tikzpicture}
\end{center}
deduced from the standard exact sequence
\[
0 \to \mathcal O_X(-mD) \longrightarrow \mathcal O_X \longrightarrow \frac{\mathcal O_X}{I^m} \to 0 \, .
\]

In cohomology we get the diagram
\begin{center}
\begin{tikzpicture}
 \matrix (m) [matrix of math nodes,row sep=2em,column sep=1.5em,minimum width=0.8em]
 {  & H^0(X,\mathcal O_{X}(mD)\otimes \frac{\mathcal O_X}{I^{m}}) & H^1(X,\mathcal O_X)   \\
  H^0((m+1)D,   \mathcal O_X(mD) \otimes \frac{\mathcal O_X}{I^{m+1}}) &  H^0(mD,    \mathcal O_X(mD) \otimes \frac{\mathcal O_X}{I^{m}})  &  H^1(D ,\mathcal O_D) \\};
 \path[-stealth]
 (m-1-2) edge (m-1-3)

   (m-2-1) edge (m-2-2)
   (m-2-2) edge (m-2-3)

   (m-1-2)     edge  node [left] {$\wr$} (m-2-2)
  (m-1-3)  edge (m-2-3)  ;
\end{tikzpicture}
\end{center}
If $\mathcal O_X(mD)\vert_{(m+1)D} \simeq \mathcal O_{(m+1)D}$ then
$\mathcal O_X(mD)\vert_{mD} \simeq \mathcal O_{mD}$ and
$$1 \in H^0(mD,\mathcal O_{mD}) = H^0(mD,    \mathcal O_X(mD) \otimes \frac{\mathcal O_X}{I^{m}})  $$ belongs to the image of the map in the lower left corner. Exactness of the bottom row
implies that $1 \in H^0(mD,\mathcal O_{mD}) $  is mapped to zero in $H^1(D,\mathcal O_D)$.

According to the diagram, the morphism $H^0(mD,\mathcal O_{mD})  \to H^1(D,\mathcal O_D)$ factors through $H^1(X,\mathcal O_X) \to H^1(D,\mathcal O_D)$.

If $1 \in H^0(mD,\mathcal O_{mD}) $ is mapped to zero in $H^1(X,\mathcal O_X)$  then we deduce from the first row of the first diagram above that $h^0(X,\mathcal O_X(mD))\ge 2$. Moreover, $H^0(X,\mathcal O_X(mD))$ contains a section nowhere vanishing on $D$.  Therefore
$mD$ moves in a pencil without base locus and we have the sought fibration.

If instead   $1 \in H^0(mD,\mathcal O_{mD})$ is mapped to a nonzero element in $H^1(X,\mathcal O_X)$ then $H^1(X,\mathcal O_X) \to H^1(D,\mathcal O_D)$ is not injective. Thus the same
holds true for the map $H^1(X,\mathcal O_X) \to \oplus H^1(D_i,\mathcal O_{D_i})$ where $D_i$ are the irreducible components of $D$. It follows that $\oplus \Alb(D_i)$  (direct sum of the Albanese varieties) do not dominate  $\Alb(X)$. By the second part of the proof of Theorem 2.1 in \cite{Totaro},  the morphism
\[
X \longrightarrow \frac{\Alb(X)}{ \oplus \Alb(D_i) }
\]
contracts $D$, and is non constant. It follows (cf. \cite{Neeman} or \cite{Totaro}) that the image is a curve and we get the sought fibration
as the Stein factorization of this morphism.
\end{proof}

\subsection{Proof of Theorem \ref{T:Totarobis}}\label{S:Totarobis}
By assumption the effective divisors $D_1$ and $D_2$ have proportional Chern classes.
Therefore, there exists non-zero positive integers $a_1, a_2$ such that the line bundle $\mathcal L = \mathcal O_X(a_1 D_1 - a_2 D_2)$ lies
in $\Pic^0(X)$. If $\mathcal L$ is a torsion element of $\Pic^0(X)$ then there exists a rational function $g: X \to \mathbb P^1$ with
zero set supported on $D_1$ and polar set supported on $D_2$. We can take $f$ as the Stein factorization of $g$.

Suppose now that $\mathcal L$ is not a torsion line-bundle. The restriction $\mathcal L\vert_{D_1}$ of $\mathcal L$ to $D_1$ is isomorphic to $\mathcal O_{D_1}(a_1 D_1)$ and according to our
assumptions is a torsion line-bundle. Hence for some integer $m\neq 0$, $\mathcal L^{\otimes m}$ is in the kernel of the restriction   morphism $\Pic^0(X)\to \Pic^0(D_1)$. Since we are assuming that $\mathcal L$ is not a torsion line-bundle, it follows that the restriction morphism $H^1(X,\mathcal O_X) \to H^1(D_1,\mathcal O_{D_1})$ is  not injective.    We conclude as in the proof of Theorem \ref{T:Neeman}. \qed

\section{Factorization of representations}\label{S:Factor}

\subsection{Criterion for factorization}
We apply Theorem \ref{T:Totaro} to establish a criterion
for the factorization of representations of quasiprojective fundamental groups.
In the statement below, we have implicitly fixed an ample
divisor $A$ in $X$ and  we consider the bilinear pairing defined in $NS(X)$, the N\'eron-Severi group of
$X$,  defined
by $(E,D) = E \cdot D \cdot A^{n-2}$ where $n = \dim X$. According to
Hodge index Theorem  this bilinear form has signature $(1,\rank NS(X)-1)$.

\begin{thm}\label{T:criteriofactor}
Let $X$ be a  projective  manifold,  $D$ be a reduced simple normal crossing divisor in $X$,
and  $\rho:\pi_1(X-D) \to G$ be a representation to a simple linear algebraic group $G$ \footnote{ As the terminology may vary, we precise that a  linear algebraic group G is \textit{simple} if it is not commutative and has no normal
algebraic subgroups (other than 1 and $G$), and it is quasi-simple if its centre Z is finite
and $G/Z$ is simple. } with Zariski dense image.
Suppose there exists $E$, a connected component of the support $D$ with irreducible
components $E_1, \ldots, E_k$,
and an (analytic) open subset $U \subset X$ containing $E$  such that the restriction of $\rho$
to $\pi_1(U - E)$ has solvable image. Then
either the intersection matrix $(E_i, E_j)$ is negative definite or
the representation factors through an orbicurve.
\end{thm}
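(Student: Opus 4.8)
The plan is to argue by contradiction: assume the intersection matrix $(E_i,E_j)$ is not negative definite, and produce a factorization through an orbicurve. The strategy is to manufacture three pairwise disjoint connected effective divisors with proportional Chern classes, so that Totaro's Theorem \ref{T:Totaro} supplies a fibration $g:X\to C$, and then to check that $\rho$ factors through the orbicurve structure induced by $g$ on (an open subset of) $C$.

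First I would use the failure of negative definiteness, together with the fact that $E$ is connected (so the matrix $(E_i,E_j)$ is ``connected'' in the graph-theoretic sense and its zero/positive part is controlled), to find an effective $\mathbb Q$-divisor $Z$ supported on $E$ with $Z^2\ge 0$ in the pairing $(\,\cdot\,,\cdot\,)$, and in fact with $(Z,E_i)\ge 0$ for all $i$; a standard argument (as in Zariski's lemma, run in reverse) shows that either $Z$ has self-intersection zero and is, up to scaling, proportional to a fiber-like class, or one can push $Z$ to have strictly positive self-intersection. In the borderline case $(Z,Z)=0$ one gets that the class of $Z$ lies on the boundary of the positive cone and is orthogonal to each $E_i$; this is exactly the numerical signature of a multiple fiber, and I would like to realize $Z$ (after clearing denominators) as genuinely movable. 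The cleanest route is: after a birational modification of $X$ that is an isomorphism near $E$, arrange $\mathcal O_X(Z)_{|Z}$ (equivalently $\mathcal O_X(nE\text{-like divisor})$ restricted to its support) to be topologically, hence after further care, torsion, and then invoke Theorem \ref{T:Totarobis} or Theorem \ref{T:Neeman} directly to get $g:X\to C$ contracting $E$. If instead $(Z,Z)>0$, then $E$ supports a big divisor and one runs into the constraint that the complement of a big-boundary component carries no interesting topology transverse to $E$ — I would derive a contradiction with Zariski density of $\rho$ combined with the solvability of $\rho|_{\pi_1(U-E)}$, using that a Zariski dense representation of $\pi_1(X-D)$ into a simple non-abelian $G$ cannot have the local monodromy around a ``big'' component be both solvable and, by a Lefschetz-type argument, normally generating.

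Granting the fibration $g:X\to C$ with $E$ inside a fiber, the next step is to upgrade $g$ to an orbicurve $Y$ carrying a factorization of $\rho$. Here the input is that $\rho$ restricted to $\pi_1(U-E)$ is solvable while $G$ is simple non-abelian: the image of $\pi_1$ of a punctured neighborhood of the fiber $F\supset E$ is solvable, so by Malcev-type rigidity (finitely generated linear solvable groups are virtually polycyclic, and a Zariski dense image forces the ambient monodromy of the fibration to be ``vertical-trivial'' up to finite index) the restriction of $\rho$ to a general fiber of $g$ has finite, hence by density trivial, image after passing to a finite étale cover compatible with $g$. This means $\rho$ descends to $\pi_1$ of the base with its orbifold structure coming from the multiple fibers of $g$, i.e.\ $\rho$ projectively (indeed honestly, into $G$) factors through an orbicurve. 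I would assemble the orbifold weights at points $p\in C$ from the multiplicities of the components of $g^{-1}(p)$ and from the (finite) local monodromy there, exactly as in the standard orbifold-fibration dictionary.

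The main obstacle is the dichotomy in the first part: cleanly extracting a \emph{fibration} rather than merely a nef class of self-intersection zero out of the non-negative-definiteness of $(E_i,E_j)$. Numerically one only gets a class on the boundary of the movable cone; promoting it to an actual morphism requires either torsion of an appropriate restricted normal bundle (to feed Theorems \ref{T:Totarobis}/\ref{T:Neeman}) or enough disjoint homologous divisors (to feed Theorem \ref{T:Totaro}). Bridging that gap — and in particular ruling out the ``big boundary'' case purely from the representation-theoretic hypotheses — is where the real work lies; the descent to the orbicurve afterwards is comparatively formal, relying only on Malcev's theorem and the simplicity of $G$.
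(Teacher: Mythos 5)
There is a genuine gap, and it sits exactly where you say ``the real work lies'': you never actually produce the input that any of Theorems \ref{T:Totaro}, \ref{T:Totarobis} or \ref{T:Neeman} require. From the failure of negative definiteness you only get a single connected effective divisor $Z$ supported on $E$ with $(Z,Z)\ge 0$. Theorem \ref{T:Totaro} needs \emph{three} pairwise disjoint such divisors; Theorems \ref{T:Totarobis} and \ref{T:Neeman} need the restricted normal bundle to be torsion, and there is no reason for that here --- the paper's own example $S=\P(L\oplus\mathcal O_C)$ with $L$ flat non-torsion exhibits a connected divisor of self-intersection zero whose normal bundle is non-torsion and which is not a fiber of any fibration; a birational modification that is an isomorphism near $E$ cannot change this. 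Likewise your proposed contradiction in the case $(Z,Z)>0$ (``Lefschetz-type argument, normally generating'') is not an argument. So the dichotomy you flag as the main obstacle is indeed unresolved in your write-up.

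The missing idea is a use of Malcev's theorem that has nothing to do with polycyclicity: since $\rho(\pi_1(U-E))$ is solvable, its Zariski closure $S$ has some finite derived length $\mu$, while $\rho(\pi_1(X-D))$ is Zariski dense in the simple group $G$ and hence contains an element $\gamma$ of the $(\mu+1)$-st derived subgroup with $\rho(\gamma)\neq\id$. Residual finiteness of the finitely generated linear group $\rho(\pi_1(X-D))$ then yields a finite quotient $\varrho:\pi_1(X-D)\to\Gamma$ with $\varrho(\gamma)\neq\id$, forcing $[\varrho(\pi_1(X-D)):\varrho(\pi_1(U-E))]\ge 3$. On a suitable smooth model $Y$ of the associated ramified cover (built with Kawamata coverings so as to control the intersection form), $E$ pulls back to at least three disjoint connected divisors, each supporting an effective divisor of non-negative self-intersection; Hodge index then forces all these self-intersections to be zero and the classes to be proportional, which simultaneously kills your ``big boundary'' case and feeds Theorem \ref{T:Totaro}. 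After that, the descent of the fibration and of the factorization to an orbicurve on $X$ proceeds essentially as you sketch (normality of $\pi_1(\text{fiber})$ in $\pi_1$ of the smooth locus, plus simplicity of $G$, gives triviality of the fiberwise monodromy closure), though the mechanism is this normality argument rather than any rigidity of solvable groups.
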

\begin{proof}
If the intersection matrix $(E_i,E_j)$ is negative definite then there is nothing to prove. Throughout we 
will assume that $(E_i,E_j)$ is not negative definite.

Let $S$ be the Zariski closure of $\rho(\pi_1(U-E))$ in $G$. Since $G$ is simple while
$\rho(\pi_1(U-E))$ is solvable,
we have that $S \neq G$. Let $\mu$ be the derived length of $S$  and  choose an  element
$\gamma$ of the $(\mu+1)$-th derived  group of $\pi_1(X-D)$ such that $\rho(\gamma) \neq \id$.

We  apply Malcev's Theorem (any finitely generated subgroup of $G$ is residually finite) \cite{malcev} to  obtain
a morphism $\varrho : \pi_1(X-D) \to \Gamma$ to a finite group $\Gamma$ such that
$\varrho(\gamma)\neq \id$.  This choice of $\varrho$ implies that the derived length of $\varrho(\pi_1(X-D))$ is at least $\mu+2$.
Since the derived length of $S$ is $\mu$, it follows that $c = [ \varrho(\pi_1(X-D)) : \varrho(\pi_1(U-E)) ]$ is at least
$3$.

Let us now consider the covering $p: X_{\varrho} \to X$, ramified along $D$, determined by $\varrho$. Over $X-D$, $p\vert_{p^{-1}(X-D)}$ is the \'etale
covering of $X-D$ determined by $\ker \varrho$. In particular $p^{-1}(X-D)$ is a smooth quasi-projective manifold. In general, $X_{\varrho}$ is projective but not necessarily smooth. Notice that by construction $p^{-1}(E)$  has at least
$c=[ \varrho(\pi_1(X-D)) : \varrho(\pi_1(U-E))] \ge 3$ distinct connected components in $X_{\varrho}$.

To be able to apply Hodge index Theorem, we now proceed to {\it desingularize} $X_{\varrho}$. In order to keep track 
of the intersection matrix $(E_i,E_j)$, instead of applying Hironaka's Theorem
we will construct a smooth variety $Y$ together with a ramified covering  $\phi: Y\to X$ which factors through $p$.

To each irreducible component $D_i$ of $D$ let $m_i$ be the order of $\varrho(\gamma_i)$  on a short loop $\gamma_i$ around $D_i$.
Let $\kappa: \tilde X \to X$ be a  finite ramified  covering of $X$, with $\tilde X$ smooth,  such that $\kappa^*(D_i) = m_i \tilde{D_i}$
where $\tilde{D_i}$ is a smooth and irreducible hypersurface and  $\tilde D= \sum \tilde{D_i}$ is a simple normal
crossing divisor.  For the existence of $\kappa$ with the above properties, we can use Kawamata coverings (see \cite[Proposition 4.1.12]{Lazarsfeld}).
If we denote the ramification divisor of $\kappa$ by $R$ and its image under $\kappa$ by $\Delta$ then we can also assume that $\Delta + D$ is a simple normal crossing divisor on $X$.

The composition $ \varrho\circ \kappa_* : \pi_1(\tilde X - \tilde D) \to \Gamma$ sends
short loops around the irreducible components of $\tilde D$ to the identity of $\Gamma$ and therefore induces a representation
$\tilde \varrho : \pi_1(\tilde X) \to \Gamma$.
Let $\pi:Y \to \tilde X$ be the \'{e}tale covering of $\tilde X$ determined by the kernel of $\tilde \varrho$. It is a projective
 manifold endowed with a ramified covering  $\phi = \kappa\circ \pi: Y \to X$ 
 which factors through $p:X_{\varrho} \to X$  as
 wanted. Consequently $\phi^{-1}(E)$ has (at least) as many connected components as $p^{-1}(E)$.

Let $B$ be one of the connected components of $\phi^{-1}(E)$
with irreducible components $B_1, \ldots, B_{\ell}$.
Consider the ample divisor $A':=\phi^* A$ on $Y$ and define  $(\cdot,\cdot)_Y$ using it.
Notice that 
\[
(\phi^* C_1 , \phi^* C_2)_Y= \phi^* C_1 \cdot \phi^* C_2 \cdot (\phi^* A)^{n-2} =  \deg(\phi) \cdot (C_1,C_2)
\]
for any divisors $C_1, C_2$ in $X$.
Since we are assuming that $(E_i, E_j)$ is not negative definite, there exists an effective divisor $F$
supported by $B$ with $(F,F)_Y \ge 0$.
 In fact, there exists such divisor for each connected component of $\phi^{-1}(E)$,
therefore at least three. Hence, we can produce $F_1, F_2, F_3$ pairwise disjoint
effective divisors with connected supports on $Y$ satisfying $(F_i,F_i)_Y  \ge 0$ and $ (F_i,F_j)_Y=0$.
Since the signature of the quadratic form $(\cdot,\cdot)_Y$
is $(1,\rank NS(Y)-1)$, we  deduce that all the three divisors have proportional Chern classes; moreover, $(F_i,F_i)_Y  = 0$.
Notice also that either $F_i \cap B = \emptyset$
or the support of $F_i$ coincides with $B$. Indeed, if $F$ is an effective divisor  with support contained in $B$, but not equal to $B$, 
then we can choose an irreducible component of $B$, say $C$, not contained in the support of $F$ but intersecting it. Therefore $(C,F)_Y >0$ and, provided that $(F,F)_Y \ge 0$,
for $k$ large enough $C+kF$ is an effective divisor with $(C+kF,C+kF)_Y = (C,C)_Y + 2k (C,F)_Y+k^2 (F,F)_Y>0$. But,
doing so, we would produce new disjoint $F_i$'s with $(F_i,F_i)_Y>0$, contradicting 
Hodge index Theorem.

We can apply Theorem \ref{T:Totaro} to ensure the existence
of a curve $\Sigma$ and a nonconstant morphism with irreducible general fiber $g:  Y \to \Sigma$  such that the divisors $F_1, F_2,$ and $F_3$ are
multiples of  fibers of $g$. The morphism $g$ is proper and open, thus all the other connected components of $\phi^{-1}(D)$ are mapped by $g$ to points. Let us denote by $\tilde\rho:=\rho\circ\phi_*:\pi_1(Y-\phi^*D)\to G$ the lifted representation.
We want first to prove that $\tilde\rho$ factors through $g$. Note that $\tilde\rho$ also has  Zariski dense image in $G$.

Let $U \subset Y-\phi^*D$ be a Zariski open subset such that the restriction of $g$ to $U$ is a
smooth and proper fibration, thus locally trivial in the $C^{\infty}$ category, over $\Sigma^o= g(U)$.
Let also {$F$ be a fiber of $g_{\vert U}$ and} $H$ be  the Zariski closure in $G$ of $\tilde\rho(\pi_1(F))$, and consider
the following diagram 
\begin{center}
\begin{tikzpicture}
  \matrix (m) [matrix of math nodes,row sep=2em,column sep=4em,minimum width=2em]
  {
   1 & H & G &   &   \\
    & \pi_1(F) & \pi_1 (U) & \pi_1(\Sigma^o ) & 1\\};
  \path[-stealth]
    (m-1-1) edge (m-1-2)
  (m-1-2) edge (m-1-3)
    (m-2-2) edge (m-2-3)
    (m-2-3) edge node [above] {$g_*$} (m-2-4)
    (m-2-4) edge (m-2-5)
    (m-2-2) edge node [left] {$\tilde\rho$}  (m-1-2)
    (m-2-3)     edge node [left] {$\tilde\rho$} (m-1-3) ;
\end{tikzpicture}
\end{center}

From this, it is clear that the image of $\pi_1(F)$ is normal in $\pi_1(U)$, and since normality is a (Zariski) closed condition,
we deduce that $H$ is a normal subgroup of $G$. Since $G$ is simple, we conclude that
$H$ must be trivial, i.e. the restriction of $\tilde\rho$ to $U$ factors through the curve $\Sigma^o$.  Following the proof of \cite[Lemma 3.5]{Corlette-Simpson}
we see that this suffices to obtain the factorization through an orbicurve.
Still denote by $g:Y\to \Sigma$ the factorization morphism.

We will now prove that the morphism $g:Y\to \Sigma$ descends to a morphism $f:X\to C$
that factors the initial representation $\rho$. Here, we follow the argument of \cite[Lemma 3.6]{Corlette-Simpson}.
By Stein factorization Theorem, we can assume that $g:Y\to \Sigma$ has connected fibers.
Assume also $\phi:Y\to X$ is Galois, with Galois group $\Lambda$; if not, replace $Y$ by some finite Galois covering $\tilde\phi:\tilde Y\stackrel{\varphi}{\to}Y\stackrel{\phi}{\to}X$
(choose a finite index normal subgroup of $\pi_1(X)$ that contains the subgroup defining $\phi$).
We note that $Y$ might become singular, but this does not matter in what follows.
For any $\gamma \in \Lambda$, we want to prove that $\gamma$ permutes $g$-fibers, i.e. 
for a generic fiber $F$ of $g$, then  $g\circ\gamma(F)$ is a point.

Aiming at a contradiction assume  $g\circ\gamma(F)$ is not a point. Then $g_{\vert\gamma(F)}:\gamma(F)\to\Sigma$ is surjective and, since the representation $\tilde\rho$
factors through the curve $\Sigma$, it follows that the representation $\tilde\rho$ has Zariski dense image
in restriction to $\gamma(F)$. Strictly speaking, in order to define the restriction $\tilde\rho$ to $\gamma(F)$,
we have to move the base point of the fundamental group (that we have omitted so far) to put it into $\gamma(F)$
and the restriction depends on the way we do this, but different choices lead to conjugated subgroups and the property of  being Zariski dense is invariant by conjugation.
Once this has been done, the image $H$ of $\tilde\rho_{\vert\gamma(F)}$ must be of finite index
in the image of the factorizing representation $\pi_1(\Sigma)\to G$. By hypothesis $G$ is simple and therefore $H$ is Zariski dense.
On the other hand, since $\tilde\rho$ comes from a representation on $X=Y/\Lambda$, it follows that
$\tilde\rho$ must be trivial in restriction to $\gamma(F)$, since it is in restriction to $F$. 
This contradiction shows that $\Lambda$, the Galois group of $\phi$, permutes the fibers  of $g:Y\to \Sigma$, and thus permutes
the points of $\Sigma$ (connectedness of fibers). We get a morphism from $X$ to the orbicurve $C=\Sigma/\Lambda$
through which $\rho$ factors.\end{proof}

\begin{remark}
We can avoid the factorization through an orbicurve by instead restricting the factorization to a Zariski open subset of $X- D$.
In the opposite direction, if we allow one dimensional Deligne-Mumford stacks with general point having non trivial stabilizer as targets of the factorization, then we can replace simple linear algebraic groups by quasi-simple linear algebraic groups in the statement, since our proof
shows that for $G$ quasi-simple, there exists a fibration such that the Zariski closure of the images of fundamental groups of fibers of $f$ under $\rho$ are finite.
\end{remark}

\subsection{Rank-two representations  at  neighborhoods of divisors}

\begin{prop}\label{P:topsolv}
Let $X$ be a complex manifold,   $D$ a reduced and simple normal crossing divisor in $X$, and $\rho:\pi_1(X-D) \to \SL(\C)$  a representation.
Let $E$ be a connected divisor with support contained in $D$ such that
for each irreducible
component $E_i$ of $E$ and any short loop $\gamma_i$ turning around $E_i$, the element  $\rho(\gamma_i)$ does not lie in the center of $\SL(\C)$, i.e. $\rho(\gamma_i)$ is distinct from $\pm \Id$.
Then  there exists an open subset $U \subset X$ containing $E$  such that the restriction of $\rho$
to $\pi_1(U - D)$ has solvable image.
\end{prop}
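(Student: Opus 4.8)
The plan is to take $U$ to be a sufficiently small regular neighbourhood of $E$ and to show directly that $\Gamma:=\rho(\pi_1(U-D))\subset\SL(\C)$ is solvable; since small regular neighbourhoods of $E$ have homotopy equivalent complements in $X-D$, the group $\pi_1(U-D)$ is essentially independent of $U$, so this is the only case to treat. I would first recall the standard stratified description of the complement of a simple normal crossing divisor in such a neighbourhood: $\pi_1(U-D)$ is generated by the meridians around those components of $D$ that meet $E$ --- in particular the meridians $m_1,\dots,m_k$ around the components $E_1,\dots,E_k$ of $E$ --- together with ``horizontal'' loops, each lying in the locally trivial $(\C^*)^a\times\C^{n-a}$-bundle over a single open stratum of $D\cap U$. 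Two properties of this description are what the proof will exploit. First, since $E$ is connected and the meridians around two intersecting components commute (local model $(\C^*)^2\times\C^{n-2}$), one can choose based representatives $m_1,\dots,m_k$ so that $[m_i,m_j]=1$ whenever $(E_i,E_j)$ is an edge of a fixed spanning tree of the dual graph of $E$. Second, when $U$ is small each horizontal generator can be taken to lie in a tube over a stratum contained in (or abutting) some component $E_i$, and the meridians spanning the fibre directions of that tube, which include $m_i$, are central in its fundamental group; hence every horizontal generator can be chosen to commute with at least one of the $m_i$.

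The remainder is elementary group theory in $\SL(\C)$. If $A,B\in\SL(\C)$ commute and $A\neq\pm\Id$, then $B$ fixes one of the fixed points of $A$ on $\P^1$: if $A$ is not semisimple, its centraliser fixes the unique fixed point of $A$; if $A$ is semisimple --- hence regular semisimple, as $A\neq\pm\Id$ --- its centraliser is precisely the maximal torus fixing the two fixed points of $A$ pointwise. By hypothesis every $\rho(m_i)$ is non-central, and I would split into two cases, propagating information along the spanning tree. In the first case some $\rho(m_i)$ is not semisimple, with unique fixed point $p\in\P^1$; then the property ``fixes $p$'' propagates along the tree to every $\rho(m_j)$ (the centraliser of any non-central element fixing $p$ still fixes $p$), hence, by the second property above, to every horizontal generator; so $\Gamma$ fixes $p$ and lies in the corresponding Borel subgroup, which is solvable. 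In the second case every $\rho(m_i)$ is semisimple; adjacent ones, commuting, lie in a common maximal torus and so have the same fixed pair, which is therefore a common pair $\{p_1,p_2\}$ for all the $\rho(m_i)$; every horizontal generator then lies in the centralising torus fixing $p_1$ and $p_2$ pointwise, so $\Gamma$ is contained in that torus and is in fact abelian. In both cases $\Gamma$ is solvable. If $D$ has further components meeting $E$, their meridians commute with some $m_i$ by the same local argument, hence also lie in $\mathrm{Stab}(p)$ (resp. in the torus), and do not affect the conclusion.

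I expect the main obstacle to be the two structural ``properties'' above: turning the informal stratified picture of $U-D$ into an honest statement, and --- more delicately --- choosing the base point, the based meridians $m_i$ and the connecting paths coherently so that all the commutation relations used in the propagation hold simultaneously along a connected graph. This is precisely where the connectedness of $E$ and the combinatorics of the strata of $D\cap U$ near $E$ are used. Once this bookkeeping is arranged, the argument reduces to the elementary dichotomy above, and the non-centrality hypothesis on the meridians of $E$ enters only through the fact that a non-central element of $\SL(\C)$ has at most two fixed points on $\P^1$ and a solvable centraliser.
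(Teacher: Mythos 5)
Your overall strategy is sound, and your local analysis (commuting meridians at crossings, centrality of the fiber class in the tube over a stratum, the dichotomy non-semisimple/semisimple and the structure of centralizers in $\SL(\C)$) matches what the paper does. The gap is in your generation claim for $\pi_1(U-D)$. It is not true that this group is generated by meridians together with loops each of which lies in a tube over a \emph{single} open stratum: whenever the incidence complex of $E$ is not simply connected (e.g.\ two components $E_1,E_2$ meeting in two distinct points $p,q$, or a cycle $E_1,\dots,E_r,E_1$ of components), van Kampen applied to the covering of $U-D$ by the tubes $U_i^{\circ}$ has disconnected overlaps and therefore produces extra generators $\delta$ which travel along $E_1$ from $p$ to $q$ and return along $E_2$. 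Such a $\delta$ does not lie over a single stratum and need not commute with any $m_i$; one only gets that $\rho(\delta)^{-1}\rho(m_1)\rho(\delta)$ commutes with $\rho(m_2)$ (via the crossing at $q$), hence that $\rho(\delta)$ \emph{normalizes} the relevant subgroup rather than centralizes $\rho(m_1)$. Concretely, your Case B conclusion that the image is abelian is false in general: in the semisimple case the image can be infinite dihedral, with $\rho(\delta)$ swapping the two common eigenlines --- this is exactly the dihedral monodromy the paper itself allows in Lemma~\ref{lem:MultisectionH}, and it is why the paper's conclusion in the semisimple case is only ``an extension of $\mathbb Z/2\mathbb Z$ by $\C^*$''.

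The statement you are proving (solvability) survives, and your argument can be patched: in the non-semisimple case the extra generators still fix the common fixed point $p$ (since $\rho(\delta)^{-1}\rho(m_1)\rho(\delta)$ is a nontrivial unipotent commuting with $\rho(m_2)$, its unique fixed point is $p$, which is also $\rho(\delta)^{-1}(p)$), and in the semisimple case they preserve the unordered pair $\{p_1,p_2\}$, landing in the normalizer of the torus, which is solvable. The paper avoids the bookkeeping entirely by phrasing everything in terms of invariant sub-local-systems: on each $U_i^{\circ}$ the rank-two local system admits a \emph{unique} invariant line (resp.\ a canonical invariant pair of lines, the fixed locus of $\rho(\gamma_i)$), and uniqueness/canonicity makes these local objects glue over arbitrary, possibly disconnected, overlaps --- yielding a global invariant section (resp.\ bisection) over $U-D$ and hence an image contained in a Borel subgroup (resp.\ in the normalizer of a maximal torus) without ever writing down a generating set.
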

\begin{proof}
Let $E_1, \ldots, E_k$ be the irreducible components of $E$ and $\gamma_1, \ldots, \gamma_k$ be
short loops turning around them.  We will denote the set of smooth points of $D$ in $E_i$ by
 $E_i^{\circ}$, i.e. {$E_i^{\circ}=E_i - \cup_{j\not=i}(E_j\cap E_i)$}.

Suppose first that $\pm \rho(\gamma_1)$ is unipotent. Since, by hypothesis, it is different from the identity,
its action on $\C^2$ leaves invariant a one-dimensional subspace $L$. If $U_1$ is a small tubular  neighborhood of
$E_1$ and $U_1^{\circ} = U_1 - D$ then $U_1^{\circ}$ has the homotopy type of a $S^1$-bundle over $E_1^{\circ}$
and therefore the subgroup generated by $\gamma_1$ in $\pi_1(U_1^{\circ})$ is normal. It follows that
every $\gamma \in \pi_1(U_1^{\circ})$ also leaves $L$ invariant. It follows that the rank two local system induced
by $\rho$ admits a unique rank one local subsystem determined by $L$ on $U_1^{\circ}$.

To analyze what happens  at a non-empty  intersection $E_1 \cap E_j$,  we can assume that both $\gamma_1$ and $\gamma_j$ have base points
near $E_1\cap E_j$. Thus  {$\gamma_1$ commutes with
$\gamma_j$,   both $\rho(\gamma_1)$ and $\rho(\gamma_j)$ are  unipotent, and they both leave $L$ invariant}.
Thus the rank one local subsystem determined by $L$ on $U_1^{\circ}$ extends to a rank one local subsystem on $U_1^{\circ} \cup U_j^{\circ}$.
Repeating the argument above for the other irreducible components $E_2, \ldots, E_k$,
we deduce the existence of a neighborhood $U$ of $E$ such that $\rho(\pi_1(U-D))$ is
contained in a Borel subgroup of $\SL(\C)$.

Similarly if $\rho(\gamma_1)$ is semi-simple, then the same holds true for every $\gamma_i$. Moreover,
the representation now leaves invariant the union of two linear subspaces $L_1$ and $L_2$ (but does not necessarily
leave invariant any of the two). We deduce that the image of $\rho$ restricted to a neighborhood of $E$ minus $D$
is contained in an extension of $\mathbb Z/2\mathbb Z$ by $\C^*$.
\end{proof}

\begin{remark}
The proof above is very similar to  the  proof of \cite[Lemma 4.5]{Corlette-Simpson}.
\end{remark}

\begin{prop}\label{P:naocontrai}
Let $X, D, E,U$ and $\rho$ be as in Proposition \ref{P:topsolv}.
Assume also that every short loop $\gamma$ turning around {an  irreducible component of $D - E$ which intersects $E$}
has monodromy in the center of $\SL(\C)$. If the intersection matrix of $E$
is invertible, then the restriction of $\rho$ to $\pi_1(U-D)$ is quasi-unipotent at  the irreducible
components of $E$.
\end{prop}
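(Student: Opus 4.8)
\emph{Reduction and setup.} First I would reduce to the case $\dim X=2$: cutting $X$ by $\dim X-2$ general members of a very ample linear system produces a smooth projective surface $S$ for which $S\cap D$ is simple normal crossing, $S\cap E$ is connected with intersection matrix a positive multiple of that of $E$ (hence negative definite), $\pi_1(S-(S\cap D))\to\pi_1(X-D)$ is onto (Lefschetz for complements), the small loops around the components of $E$ can be realized inside $S$, and the hypothesis on components of $D-E$ meeting $E$ is inherited; so replace $X$ by $S$, and after shrinking $U$ assume $U\cap D=E\cup\bigcup_\ell D'_\ell$ with $D'_\ell$ the components of $D-E$ meeting $E$, and $U$ deformation retracting onto $E$. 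By Proposition~\ref{P:topsolv}, up to conjugation $\rho(\pi_1(U-D))$ lies in a Borel subgroup $B\subset\SL(\C)$ or in the normalizer $N$ of a maximal torus $T$. Since the monodromy around each $D'_\ell$ is $\pm\Id$, the projectivization $\P\rho$ kills every meridian of $D-E$, and as these meridians normally generate the kernel of $\pi_1(U-D)\twoheadrightarrow\pi_1(U-E)$, we get $\P\rho=\bar\rho\circ(\text{natural surjection})$ with $\bar\rho\colon\pi_1(U-E)\to\PSL(\C)$ whose image lies in $\Aff(\C)=\P B$ or in $\P N=\mathbb G_m\rtimes\Z/2\Z=\operatorname{Stab}(\{0,\infty\})$. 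In the latter case, if $\bar\rho(\gamma_i)$ lies in the nontrivial coset then $\rho(\gamma_i)\in N\smallsetminus T$ has trace $0$, so has eigenvalues $\pm\sqrt{-1}$ and is already quasi-unipotent; in all remaining cases $\bar\rho(\gamma_i)$ lies in the multiplicative part, and it suffices to show its multiplier — the ratio of the eigenvalues of $\rho(\gamma_i)$ — is a root of unity, since then these eigenvalues $a,a^{-1}$ satisfy that $a^2$ is a root of unity.

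\emph{The key relation.} From the long exact sequence of the pair $(U,U-E)$ — identifying $H_2(U,U-E)$ with the free abelian group on the $2$-disks transverse to the $E_j$, $H_2(U)\cong H_2(E)$ with the span of the fundamental classes $[E_j]$, the induced map with the intersection matrix, and using that the connecting map sends the transverse disk of $E_j$ to $\gamma_j$ — one obtains the relations
\[
\sum_{j}(E_i\cdot E_j)\,\gamma_j=0\quad\text{in }H_1(U-E;\Z),\qquad i=1,\dots,k.
\]
In the Borel case, composing $\bar\rho$ with the linear-part homomorphism $\Aff(\C)\to\C^*$ gives a character $\chi\colon H_1(U-E;\Z)\to\C^*$ sending $\gamma_i$ to the eigenvalue ratio of $\rho(\gamma_i)$; feeding the relations into $\chi$ gives $\prod_j\chi(\gamma_j)^{E_i\cdot E_j}=1$ for each $i$, and multiplying by the adjugate of the intersection matrix, which is invertible over $\Q$ since negative definite, forces each $\chi(\gamma_j)$ to be a root of unity, as required.

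\emph{The semisimple case.} Here $\P N$ has no nontrivial character to $\C^*$, so I would pass, if necessary, to the connected double étale cover $W\to U-E$ attached to $\pi_1(U-E)\xrightarrow{\bar\rho}\P N\to\Z/2\Z$, over which $\bar\rho$ becomes $\mathbb G_m$-valued, then complete $W$ to a degree-two cover $\bar W\to U$ (the normalization of $U$ in $\C(W)$) branched over the union of the $E_i$ with $\bar\rho(\gamma_i)\notin\mathbb G_m$ — this $\bar W$ has at most ordinary double point singularities, lying over the nodes of $E$ where both branches are branch components — and resolve them to obtain a smooth surface $p\colon\tilde W\to U$. Granting that $\tilde E:=p^{-1}(E)_{\mathrm{red}}$ is again negative definite (see below), the argument of the Borel case applied on $\tilde W$ shows that the multiplier character sends every meridian of a component of $\tilde E$ to a root of unity; for each $i$ with $\bar\rho(\gamma_i)\in\mathbb G_m$ a lift of $\gamma_i$ is such a meridian whose image is the multiplier of $\bar\rho(\gamma_i)$, namely the eigenvalue ratio of $\rho(\gamma_i)$, which is thus a root of unity. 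This completes the proof modulo the negative-definiteness claim.

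\emph{Why $\tilde E$ is negative definite — the main obstacle.} Since $E$ is negative definite, Grauert's criterion lets us contract it to a normal point, yielding a normal surface $\bar U$ and a modification $U\to\bar U$; let $\bar U'$ be the normalization of $\bar U$ in $\C(W)$. Then $\bar W$, hence $\tilde W$, maps to $\bar U'$ contracting $p^{-1}(E)$, and the components of $\tilde E$ together with the exceptional curves of the resolution form the exceptional locus of the proper modification $\tilde W\to\bar U'$ onto a normal surface, hence a negative definite configuration; the intersection matrix of $\tilde E$ is a principal submatrix of it and so is negative definite. (If $\tilde E$ fails to be simple normal crossing one performs a few more blow-ups at its points, which preserves the whole structure of the argument.)
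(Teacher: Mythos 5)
Your proof is correct, but it takes a genuinely different route from the paper's. Where you extract the key arithmetic relations $\sum_j (E_i\cdot E_j)\,\gamma_j=0$ in $H_1(U-E;\Z)$ from the long exact sequence of the pair $(U,U-E)$ --- Mumford's plumbing computation --- and then feed them into the multiplier character, the paper instead takes a Deligne logarithmic extension $(F,\nabla)$ of the local system on $U$, extracts an invariant sub-line-bundle $F_1$ with induced connection $\nabla_1$, and applies the residue formula $c_1(F_1)=\sum\Res_{E_i}(\nabla_1)E_i+\sum\Res_{D_j}(\nabla_1)D_j$; integrality of the pairings $(c_1(F_1),E_k)$ then yields the same linear system $A\cdot v=b$ with $2b\in\Z^k$ over the negative definite integral matrix $A$, and the rationality-of-the-solution step is identical. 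The two arguments are essentially dual to one another (the paper's own remark notes its proof is ``reminiscent of Mumford's computation''). What your approach buys: it avoids Deligne extensions entirely, and your treatment of the dihedral case --- passing to the double cover, normalizing over the Grauert contraction of $E$, and deducing negative definiteness of the exceptional configuration upstairs --- makes fully explicit a point the paper compresses into the single sentence ``modulo passing to a double covering \dots\ we can assume $(F,\nabla)$ is reducible''. What it costs: you need the Lefschetz-type reduction to a surface section, whereas the paper works directly on $X$ with the pairing $E_i\cdot E_j\cdot A^{n-2}$; and your appeal to Proposition \ref{P:topsolv} should really be to its proof rather than its statement, since a solvable subgroup of $\SL(\C)$ need not be conjugate into a Borel or into the normalizer of a torus (e.g.\ the binary tetrahedral group) --- but the proof of Proposition \ref{P:topsolv} produces the invariant line(s) directly, and a finite image is quasi-unipotent for free, so nothing is lost.
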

\begin{proof}
Let $(F,\nabla)$ be a rank two vector bundle over $U$ with a flat logarithmic connection whose monodromy
is given by $\rho$ (see \cite[Proposition  5.4, p.94]{Deligne}). Since the monodromy is solvable, around each point of $U-D$ we have one or two
subbundles of $F$ which are left invariant by $\nabla$. Modulo passing to a double covering of
$U-D$ if necessary, we can assume that $(F,\nabla)$  is reducible, i.e. we have a subbundle $F_1\subset F$ and a logarithmic connection $\nabla_1$ on $F_1$ such that
$\nabla_1 = \nabla\vert_{F_1}$.
The  monodromy of $\nabla_1$ on a loop $\gamma$ around irreducible components of $E$ equal   to
one of the eigenvalues of $\rho(\gamma)$, say $\lambda_{\gamma}$.
If $\gamma_i$ is a short loop around an irreducible component $E_i$ of $E$ then the residue of $\nabla_1$ along
$E_i$ satisfies
\[
\exp( 2 \pi i \Res_{E_1}(\nabla_1) )  =  \lambda_{\gamma_1}  \, .
\]
By the residue formula we can write
\[
  c_1(F_1) = \sum \Res_{E_1}(\nabla_1) E_i + \sum \Res_{D_j}(\nabla_1) D_j
\]
where $D_1, \ldots, D_s$ are the irreducible components of $D$ intersecting $E$ but not contained in it.
Since the eigenvalues around $D_j$ are $\pm 1$, we have that $\Res_{D_j}$ is a half-integer.
 Since $c_1(F_1)$ lies in $H^2(U, \mathbb Z)$, we have that for every $k$
\[
 (c_1(F_1) , E_k) = \sum \Res_{E_i}(\nabla_1) (E_i  , E_k) +  \sum \Res_{D_j}(\nabla_1) (D_j , E_k)
\]
is an integer. Therefore the vector $v = (\Res_{E_1}(\nabla_1), \ldots, \Res_{E_k}(\nabla_1))^t$ satisfies
a linear equation of the form $A \cdot v = b$ with $A=(E_i ,  E_j)$ and $2b \in \mathbb Z^k$.
Since $A$ has integral coefficients and is negative definite, it follows that $v$ is a rational vector. Therefore
the restriction of $\rho$ to $U-D$ is quasi-unipotent at the irreducible components of $E$.
\end{proof}

\begin{remark}
The proof above is reminiscent of Mumford's computation \cite{Mumford} of the homology of the plumbing of a contractible divisor
on a smooth surface.
\end{remark}

\subsection{Proof of Theorem \ref{THM:A}}
 Let $X$ be a projective manifold and $D\subset X$ a simple normal crossing hypersurface such that $\X0 =X-D$. Let $\rho: \pi(X-D) \to \SL(\C)$ be a Zariski dense representation which is not quasi-unipotent at infinity.
Let $E$ be a connected divisor with support contained in $|D|$ \footnote{If $H$ is a divisor, $|H|$ stands for the \textit{support} of H} such that $\rho(\gamma) \neq \pm \Id$ for every small  loop around an irreducible component of $E$, and $\rho(\gamma)$ is not quasi-unipotent for at least one small loop. If $E$ is maximal with respect to these properties, Proposition \ref{P:topsolv} implies that the restriction of the projectivization of $\rho$ to a neighborhood of $E$ is solvable, and Proposition \ref{P:naocontrai} implies that the intersection matrix of $E$ is indefinite. Let $D'\subset D$ the union of the components of $D$ around which the monoromy is equal to $\pm\Id$. Clearly $|E|$ is a connected component of $|D-D'|$.
Since $\PSL(\C)$ is a simple group we can apply Theorem \ref{T:criteriofactor} (replacing $D$ by $D-D'$)  to factorize the projectivization of $\rho$ through an orbicurve. Theorem \ref{THM:A} follows. \qed

\section{Riccati foliations}\label{S:basic}

Here we recall basic definitions and properties of Riccati foliations,
 and provide some reduction lemmata. 
 
\subsection{Projective connections and Riccati foliations}

Let $E$ be a rank $2$ vector bundle on a complex manifold $X$ and $\nabla:E\to E\otimes\Omega^1_X(D)$ 
be a meromorphic connection on $E$ with (effective) polar divisor $D$: the operator
$\nabla$ is $\C$-linear and satisfying Leibniz rule 
$$\nabla(f\cdot s)=f\cdot \nabla(s)+df\otimes s\ \ \ \text{for any local section }s\text{ and function }f.$$ 
In any local trivialization $Z:E\to  \C^2$ of the bundle, the connection is defined by 
$$Z\mapsto \nabla(Z)=dZ+AZ\ \ \ \text{with}\ \ \ Z=\begin{pmatrix}z_1\\ z_2\end{pmatrix}\ \text{and}\ 
A=\begin{pmatrix}\alpha&\beta\\ \omega&\delta\end{pmatrix}$$
where $A$ is a matrix of meromorphic $1$-forms (sections of $\Omega^1_X(D)$).

The operator $\nabla$, being linear, commutes with the fiberwise action of $\mathbb G_m=\C^*$ on $E$
and induces a projective connection on the $\P^1$-bundle $\P(E)$. In local trivialization above,
after setting $(1:z)=(z_1:z_2)$, the differential equation $\nabla Z=0$ takes the form $\Omega=0$
for the Riccati $1$-form
$$\Omega=dz+\omega_0+z\omega_1+z^2\omega_2$$
with $(\omega_0,\omega_1,\omega_2)=(\omega,\delta-\alpha,-\beta)$. 
Any two connections $(E,\nabla)$ and $(E',\nabla')$ induce the same projective connection if, and only if
there is a rank $1$ meromorphic connection $(L,\zeta)$ such that $(E',\nabla')=(L,\zeta)\otimes(E,\nabla)$.
The connection $(E,\nabla)$ is actually determined by the (independant) data of the projective connection 
and the trace connection $(\det(E),\mathrm{tr}(\nabla))$ (defined in local trivialization by $d+\alpha+\delta$).

The distribution $\Omega=0$ defines a (singular codimension one) foliation $\mathcal H$ on the total space
$P$ of the $\P^1$-bundle if, and only if it satisfies Frobenius integrability condition $\Omega\wedge d\Omega=0$, 
which is equivalent to
\begin{equation}\label{IntCondOmega}
\left\{\begin{matrix}
d\omega_0=\hfill\omega_0\wedge\omega_1\\
d\omega_1=2\omega_0\wedge\omega_2\\
d\omega_2=\hfill\omega_1\wedge\omega_2
\end{matrix}\right.
\end{equation}
The flatness condition $\nabla\cdot\nabla=0$ for the linear connection, which writes $dA+A\cdot A=0$ in local trivialization,
is equivalent to (\ref{IntCondOmega})  (flatness of the projective connection) and the flatness $d(\alpha+\delta)=0$ for 
the trace connection. In this case, $\nabla$-horizontal sections project onto the leaves of $\mathcal H$.

We say that $(P\to X,\mathcal H)$ is a Riccati foliation since over a general point of $X$, 
the $\P^1$-fiber is tranverse to $\mathcal H$.

\begin{remark}
A codimension one foliation $\mathcal H$ on (the total space of a) $\P^1$-bundle $P\to X$ is a Riccati foliation 
if and only if it is tranverse to a general $\P^1$-fiber.
Indeed, in local trivialization, $\mathcal H$ must be defined by a $1$-form $P(x,z)dz+\tilde{\Omega}(x,z)$ polynomial in $z$.
One easily checks that transversality to $\C=\P^1-\{\infty\}$ implies that $P$ does not depend on the variable $z$. On the other hand, transversality near $z=\infty$ implies that $\tilde{\Omega}$ has degree at most $2$ in $z$.
\end{remark}

\begin{remark} \label{R:generalriccati}One could define define Riccati foliations on arbitrary $\P^1$-bundles, not necessarily of the form $\P(E)$.
However, in this paper, {\it all Riccati foliations are defined on projectivized vector bundles $\P(E)$}. 
In particular, when $X$ is projective, they are birationally equivalent to the trivial bundle $X\times \P^1$ where all above formula
makes sense globally; moreover, they come from the $\sl$-connection on $X\times \C^2$ obtained by setting $\alpha+\delta=0$.
\end{remark}

\begin{remark}
Any two $\sl$-connections $(E,\nabla)$ and $(E',\nabla')$ are equivalent if, and only if,
there exists a flat rank $1$ logarithmic connection $(L,\delta)$ on $X$
having monodromy into the binary group $\{\pm 1\}\subset\mathbb G_m$ such that $(E,\nabla)$ is birationally equivalent to
$(L,\delta)\otimes (E',\nabla')$. In particular, $(E,\nabla)$ and $(E',\nabla')$ are birationally equivalent
after pulling them back to the ramified two-fold cover $Y \to X$ determined by the monodromy representation of  
$(L,\delta)$. 
\end{remark}

\begin{remark}\label{R:curvecase}
If $X$ is  a projective manifold and $H\subset X$ is an hypersurface, there may exist representations
    \[\rho: \pi_1(X-H)\to \PSL(\C)\] which cannot be realized as the monodromy of a Riccati foliation (see \cite[Example 5.2]{RH}). However, when $X$ is a projective curve, then $H=\{p_1,...,p_l\}$ is a finite union of points, this obstruction does not exist because every local monodromy (around each $p_i$) can be realized as the projectivisation of the monodromy of a meromorphic connection. This enables us to construct over $X$ a ${\mathbb P}^1$ bundle (hence a projectivisation of a rank $2$ vector bundle) equipped with a Ricatti foliation with the right monodromy (see  \cite{aln} for the details).
\end{remark}

The (effective) polar divisor $(\mathcal H)_\infty$ of the Riccati foliation is defined 
as the direct image under $\pi:P \to X$ of the tangency
divisor between $\mathcal H$ and the vertical foliation defined by the fibers of $\pi$.
It corresponds to the vertical part of the polar divisor of $\Omega$ in local trivialization. 
We have $(\nabla)_\infty\ge(\mathcal H)_\infty$ since the trace $\mathrm{tr}(\nabla)$ may have
more, or higher order poles; however, in the $\sl$-case, the two divisors coincide.

The {\it monodromy representation} of the Riccati foliation $(P\to X,\mathcal
H)$ is the representation
$$\rho_{\mathcal H}:\pi_1(X \setminus \vert (\mathcal H)_{\infty} \vert)\longrightarrow\PSL(\C)$$
defined by lifting paths on $X\setminus \vert (\mathcal H)_{\infty} \vert$ to the leaves of
$\mathcal H$. We note that $\rho_{\mathcal H}$ is just the projectivization of the linear monodromy of $\nabla$.

\subsection{Riccati foliations defined by a closed $1$-form}\label{sec:RiccClosedForm}
Let us start with two criteria.

\begin{lemma}\label{L:uniquelift}
Let $(P,\mathcal H)$ be a Riccati foliation over a projective manifold $X$.
If there exists a  birational map $\Phi : P \dashrightarrow P$ distinct from the identity  such that $\Phi^* \mathcal H = \mathcal H$
and $\pi \circ \Phi = \pi$ then there exists a generically finite morphism of degree at most two $f:Y\to X$ such that $f^*(P, \mathcal H)$ is defined by a closed rational $1$-form.
\end{lemma}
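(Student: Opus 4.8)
The plan is to exploit the Riccati structure together with the given automorphism $\Phi$ of the $\mathbb P^1$-bundle that preserves both the fibration $\pi$ and the foliation $\mathcal H$. Since $\Phi$ is fibered over $X$ and acts as the identity on the base, it restricts on a general fiber to a nontrivial Moebius transformation, and this family of Moebius transformations depends rationally on the base point; in other words $\Phi$ is a rational section of the bundle $\mathrm{PGL}_2$-bundle $\mathrm{Aut}(P/X)$. First I would classify $\Phi$ by the nature of its fixed-point set on a general fiber: either $\Phi$ is an involution (two fixed points on the general fiber, or an order-two parabolic — but a parabolic has infinite order, so in the involutive case we get exactly two distinct fixed points generically), or $\Phi$ has infinite order and fixes either one point (parabolic type) or two points (hyperbolic/elliptic type) on each general fiber. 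In every case the fixed-point locus $\mathrm{Fix}(\Phi)\subset P$ projects onto $X$, so after possibly passing to the two-fold cover $f:Y\to X$ that separates the two branches (this is where ``degree at most two'' enters — when $\Phi$ has two fixed points swapped by the monodromy of the base), we obtain one or two genuine rational sections $\sigma_0,\sigma_\infty: X\dashrightarrow P$ (or $Y\dashrightarrow P_Y$) which are $\mathcal H$-invariant, because $\Phi^*\mathcal H=\mathcal H$ forces $\mathrm{Fix}(\Phi)$ to be tangent to $\mathcal H$.

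Next I would use these invariant sections to put $\mathcal H$ in a normal form. Birationally we may trivialize $P = X\times\mathbb P^1$ with coordinate $z$, and a Riccati foliation is then given by $\Omega = dz + \omega_0 + z\omega_1 + z^2\omega_2$ with the integrability conditions \eqref{IntCondOmega}. If $\Phi$ is an involution, after the two-fold cover we have two disjoint invariant sections, which we may send to $z=0$ and $z=\infty$; invariance of $\{z=0\}$ kills $\omega_0$ and invariance of $\{z=\infty\}$ kills $\omega_2$, leaving $\Omega = dz + z\omega_1$, i.e. $\mathcal H$ is defined by the closed rational $1$-form $\tfrac{dz}{z}+\omega_1$ (closedness of $\omega_1$ being exactly the middle equation of \eqref{IntCondOmega} once $\omega_0=\omega_2=0$). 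If $\Phi$ is parabolic with a single invariant section, send it to $z=\infty$; this kills $\omega_2$, so $\Omega = dz+\omega_0+z\omega_1$ is a Riccati equation with an affine (rather than projective) structure, and the fact that $\Phi$ acts by the nontrivial translation $z\mapsto z+c(x)$ preserving $\mathcal H$ forces $\omega_1 = 0$ (a genuine translation can only be a symmetry if the ``$z$'' coefficient vanishes), so $\Omega = dz+\omega_0$ with $\omega_0$ closed — again a closed rational $1$-form. In the hyperbolic/elliptic infinite-order case the two fixed points are individually invariant over $X$ after at most a double cover, and we are back in the two-section situation above.

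The main obstacle I expect is the bookkeeping around the ``degree at most two'' claim: a priori the two fixed points of an involutive $\Phi$ need not be defined over $X$ — the monodromy of the base may interchange the two branches of $\mathrm{Fix}(\Phi)\to X$ — so one must argue that the only ambiguity is this $\mathbb Z/2$, i.e. that $\mathrm{Fix}(\Phi)\to X$ is generically two-to-one (not worse), which follows because a Moebius transformation has at most two fixed points; then the cover $f:Y\to X$ is precisely the normalization of $X$ in the function field of $\mathrm{Fix}(\Phi)$, of degree one or two. A secondary point requiring care is that the normal forms above were derived \emph{birationally}: one must note that the existence of a closed rational $1$-form defining $f^*\mathcal H$ is a birational statement (a closed rational $1$-form pulls back along birational maps to a closed rational $1$-form, and its zero divisor is irrelevant to the foliation it defines), so working on the trivialized model $Y\times\mathbb P^1$ loses nothing. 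Finally, in the parabolic sub-case one should double-check that $\Phi$ really acts as an honest translation after the chosen trivialization and is not merely an affine map $z\mapsto a(x)z+c(x)$ with $a\not\equiv 1$; but an affine map with $a\neq 1$ has a second fixed point at finite distance, contradicting that $\Phi$ was parabolic (one fixed point), so this case is internally consistent.
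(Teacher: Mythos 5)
Your strategy is essentially the paper's: look at the fixed locus of $\Phi$, which is generically one- or two-valued over $X$, pass to a double cover in the two-valued case so that it splits into sections, normalize $\Phi$ to a translation or a scaling, and read off a closed rational $1$-form. Two points deserve comment. First, you assert without argument that $\mathrm{Fix}(\Phi)$ is tangent to $\mathcal H$. This is true — locally $\varphi\circ\Phi=h\circ\varphi$ for a distinguished first integral $\varphi$ and a germ $h\neq\mathrm{id}$ of biholomorphism of $\P^1$ ($h=\mathrm{id}$ would force $\Phi=\mathrm{id}$), so $\varphi$ takes values in the discrete set $\mathrm{Fix}(h)$ along $\mathrm{Fix}(\Phi)$ — but it needs a word of justification. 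The paper bypasses it entirely: comparing the $dz$-coefficients in $\Phi^*\Omega\wedge\Omega=0$ reduces the invariance of $\mathcal H$ to $\Phi^*\Omega=\Omega$ (translation case) or $\Phi^*\Omega=\lambda\Omega$ (scaling case), and the vanishing of $\omega_0$ and/or $\omega_2$ drops out of that identity.

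Second, and more substantively, your parabolic case contains an error. For $\Phi\colon z\mapsto z+c(x)$ with $c$ non-constant, expanding $\Phi^*\Omega=\Omega$ in powers of $z$ gives $\omega_2=0$ and $\omega_1=-d\log c$, \emph{not} $\omega_1=0$; so $\Omega=dz+\omega_0+z\omega_1$ is not itself closed. The parenthetical "a genuine translation can only be a symmetry if the $z$-coefficient vanishes" is only correct for translations by a constant. The conclusion of the lemma nevertheless survives, because $\Omega/c$ is closed (using $d\omega_0=\omega_0\wedge\omega_1$); equivalently, one should first rescale $z\mapsto z/c$ to reduce to a translation by $1$, after which $\omega_1=0$ does hold. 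With these two repairs your proof is complete and coincides with the one in the paper.
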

\begin{proof}
Since $\Phi$ commutes with projection $\pi:P \to X$ it follows that over a general fiber of $\pi$, $\Phi$
is an automorphism. Let $F = \{ z \in P - \indet(\Phi) | \Phi(z) = z \}$ be the set of fixed points of $\Phi$. Since we are dealing with a family of automorphism of $\mathbb P^1$,  the projection of $F$ to $X$ is generically finite of degree one or two. Assume first that the degree is one. Then we can birationally trivialize $P$ in such a way that $F$ becomes the section at infinity and $\Phi$ is of the form $z \mapsto z + \tau$ for some $\tau \in \C(X)$. Let
$
\Omega = dz + \omega_0 + \omega_1 z + \omega_2 z^2
$
be a rational form defining $\mathcal H$. The invariance of $\mathcal H$ under $\Phi$ reads as
\[
\Omega \wedge \Phi^* \Omega =0  \iff \omega_2 = 0 \text{ and } \omega_1 = - d \log \tau  \iff d ( \tau \Omega ) = 0 \, .
\]

If the degree of $\pi\vert_{Z}$ is two then after replacing $X$ by (a resolution of)  a ramified double covering
we can assume that $P$ is trivial and that $\Phi$ is given by $z \mapsto \lambda(x) z$.
The invariance of $\mathcal H$ under $\Phi$ reads as
\[
\Omega \wedge \Phi^* \Omega =0  \iff \omega_0 = \omega_2 = d\lambda = 0   \iff d ( z^{-1} \Omega ) = 0 \, .
\]
This concludes the proof of the lemma.
\end{proof}

\begin{lemma}\label{lem:MultisectionH}
Let $(P,\mathcal H)$ be a Riccati foliation over a complex manifold $X$.
Let $H\subset P$ be a $\mathcal H$-invariant (maybe singular) hypersurface
which intersects the generic fiber of $P\to X$ at $2\le n<\infty$ distinct points.
\begin{itemize}
\item If $n\ge3$, then $\mathcal H$ has  a (non constant) meromorphic first integral.
\item If $n=2$, maybe passing to a two-fold cover $X'\to X$, then $\mathcal H$ is defined
after a convenient bundle trivialization
by $\Omega=\frac{dz}{z}+\omega$ with $\omega$ a closed $1$-form.
\end{itemize}
In particular, the monodromy of $\mathcal H$ is virtually abelian.
\end{lemma}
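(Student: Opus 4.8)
The idea is that an $\mathcal H$-invariant hypersurface $H$ meeting the generic fiber in finitely many points gives rise to an $\mathcal H$-invariant cross-ratio-type function, and the monodromy of $\mathcal H$ must permute these $n$ points fiberwise. First I would restrict attention to the Zariski open set $U\subset X$ over which $H$ really does meet each fiber transversely in exactly $n$ distinct points; the covering $H|_{\pi^{-1}(U)}\to U$ is then an étale $n$-sheeted covering, and the monodromy representation $\rho_{\mathcal H}:\pi_1(U)\to \PSL(\C)$ has image preserving this finite set of $n$ points of $\mathbb P^1$ (for a suitable local trivialization identifying the $n$ marked points with a fixed $n$-element subset of $\mathbb P^1$). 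Consequently $\rho_{\mathcal H}$ lands in the subgroup of $\PSL(\C)$ stabilizing a set of $n\ge 2$ points, which is finite when $n\ge 3$ and is the infinite dihedral-type group $\mathbb G_m\rtimes\mathbb Z/2\mathbb Z$ when $n=2$. This already gives the final sentence: the image is virtually abelian in both cases (finite, resp. virtually $\mathbb G_m$).

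For the case $n\ge 3$: since the image of $\rho_{\mathcal H}$ is finite, the leaves of $\mathcal H$ over $U$ define a finite covering, so $\mathcal H$ admits a multivalued first integral with finite monodromy; taking the appropriate symmetric function of the branches (or equivalently pushing forward along $\pi$) produces a genuine meromorphic function on $P$ constant on leaves, i.e. a non-constant meromorphic first integral. Concretely, one may choose three sections of $H\to U$ (after a finite base change they exist) and send them by a fiberwise Möbius map to $0,1,\infty$; in this trivialization $\mathcal H$ has constant monodromy in the finite stabilizer of $\{0,1,\infty,\dots\}\subset\mathbb P^1$, hence the coordinate $z$ is itself a first integral up to this finite group action, and averaging over the group yields the first integral. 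Some care is needed to descend from the finite base change back to $X$, but this is standard (invariants of the Galois group of the covering).

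For the case $n=2$: after possibly a two-fold cover $X'\to X$ we may assume the two branches of $H$ are given by two disjoint sections, which we normalize to $z=0$ and $z=\infty$ by a birational bundle transformation. Invariance of these two sections under $\mathcal H$ forces, in the Riccati form $\Omega=dz+\omega_0+z\omega_1+z^2\omega_2$, that $\omega_0=\omega_2=0$ (the sections $z=0$ and $z=\infty$ are leaves precisely when the coefficients of $z^0$ and $z^2$ vanish). Then $\Omega=z\omega_1+dz$, i.e. $\mathcal H$ is defined by $\frac{dz}{z}+\omega_1$, and the integrability conditions \eqref{IntCondOmega} reduce to $d\omega_1=0$; so $\omega:=\omega_1$ is closed, as claimed. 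Finally, the monodromy of $\frac{dz}{z}+\omega$ is generated by the homotheties $z\mapsto \exp(-\oint_\gamma\omega)z$, hence abelian, and together with the at-most-degree-two base change this gives the virtually abelian conclusion.

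\textbf{Main obstacle.} The delicate point is the bookkeeping around finite base changes and the resolution of the singularities of $H$: to put the $n$ marked points into standard position one typically needs to pull back by the (possibly ramified) covering $H\to X$ of degree $n$, work there, and then descend the first integral (for $n\ge 3$) or the normal form (for $n=2$, where only a degree-two descent is allowed) back to $X$ (or $X'$). Ensuring that the two branches in the $n=2$ case can be separated into disjoint \emph{sections} after only a two-fold cover — rather than needing to blow up — is exactly where the hypothesis $n=2$ and the two-fold cover $X'\to X$ enter, and this is the step I expect to require the most care.
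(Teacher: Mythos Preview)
Your $n=2$ argument coincides with the paper's: pass to the two-fold cover so that $H$ splits into two sections, normalize them to $z=0$ and $z=\infty$, read off $\omega_0=\omega_2=0$, and let integrability give $d\omega_1=0$.

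For $n\ge 3$ the paper is more direct, and your monodromy-first route has a genuine gap. The implication ``finite monodromy $\Rightarrow$ meromorphic first integral'' is \emph{false} for irregular Riccati foliations; the paper flags precisely this in the paragraph following the lemma (an irregular Riccati can have trivial monodromy and still have transcendental leaves), so your sentence ``since the image of $\rho_{\mathcal H}$ is finite, the leaves of $\mathcal H$ over $U$ define a finite covering, so $\mathcal H$ admits a multivalued first integral'' does not by itself extend across the polar divisor. What does work is the concrete step you sketch but do not finish: after a finite base change $H$ splits into $n$ disjoint sections, and once three of them are placed at $z=0,1,\infty$ each is \emph{individually} $\mathcal H$-invariant (not merely permuted as a set). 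Hence, in addition to $\omega_0=\omega_2=0$, invariance of $\{z=1\}$ forces $\omega_0+\omega_1+\omega_2=0$, so $\omega_1=0$ as well and $\Omega=dz$. Thus $z$ itself is a first integral on the cover, with no residual finite group and no averaging; in particular the Riccati is automatically regular there, and the ``main obstacle'' you anticipate about descent dissolves: the leaves of $\mathcal H$ on the original $P$ have algebraic closure, and the first integral descends as a symmetric function of the Galois conjugates of $z$.
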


Since $H$ is $\mathcal H$-invariant, the intersection set between $H$ and a fiber
must be (globally) invariant by the monodromy group computed on the same fiber.
For $n=2$, this implies that the monodromy is dihedral, and for $n>2$, that it is finite.
The conclusion of the Lemma is much stronger in case $\mathcal H$ is irregular,
since $\mathcal H$ could have no monodromy but transcendental leaves in that case.

\begin{proof}
Maybe passing to a finite cover $X'\to X$, we can assume that $H$ splits into $n$ meromorphic sections.
For $n\ge 3$ one can send three of them to $z=0,1,\infty$ and observe that their $\mathcal H$-invariance
implies $\omega_0=\omega_1=\omega_2=0$. Therefore, $z$ is a first integral and all leaves have algebraic closure.
For $n=2$ one can send the two sections to $z=0$ and $z=\infty$ and we get that the Riccati $1$-form
defining $\mathcal H$ takes the form $dz+\omega z$; Frobenius integrability implies $d\omega=0$.
\end{proof}

Let us now describe Riccati foliations defined by closed $1$-forms.

\begin{prop}\label{prop:RiccClosedForm}
Let $(P,\mathcal H)$ be a Riccati foliation over a projective complex manifold $X$.
If $\mathcal H$ is defined by a closed $1$-form $\Omega$ on $P$, then:
\begin{itemize}
\item either $\mathcal H$ has a (non constant) meromorphic first integral,
\item or after a convenient bimeromorphic bundle trivialization, we have
\begin{equation}\label{NormalFormRiccClosedForm}
\Omega=c\left(\frac{dz}{z}+\omega\right) \ \ \ \text{or}\ \ \ \Omega=dz+\omega
\end{equation}
with $\omega$ a meromorphic closed $1$-form on $X$ and $c\in\C^*$.
\end{itemize}
\end{prop}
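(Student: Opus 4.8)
The plan is to fix a bimeromorphic bundle trivialization $P\dashrightarrow X\times\P^1$, so that $\mathcal H$ is defined there by a Riccati $1$-form $\eta=dz+\omega_0+z\omega_1+z^2\omega_2$ with $\omega_0,\omega_1,\omega_2$ meromorphic $1$-forms on $X$, and $\Omega=f\eta$ for some meromorphic function $f$ on $X\times\P^1$ (both forms define the same foliation on the connected manifold $X\times\P^1$). First I would turn the hypothesis $d\Omega=0$ into an equation for $f$. A direct computation from the integrability relations $(\ref{IntCondOmega})$ gives $d\eta=\eta\wedge(\omega_1+2z\omega_2)$, whence $d\Omega=\bigl(df-f(\omega_1+2z\omega_2)\bigr)\wedge\eta$; thus $d\Omega=0$ amounts to $\tfrac{df}{f}=(\omega_1+2z\omega_2)-h\,\eta$ for a meromorphic function $h$, and separating the $dz$\nobreakdash-component from the part along $X$ this reads $\tfrac{d_xf}{f}-\tfrac{f_z}{f}(\omega_0+z\omega_1+z^2\omega_2)=\omega_1+2z\omega_2$, an identity $(\star)$ between rational functions of $z$ with meromorphic $1$-form coefficients on $X$ (here $f_z=\partial f/\partial z$ and $d_xf$ is the differential along $X$).

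Next I would read off the geometry from $(\star)$. Factoring $f=c(x)\prod_i(z-\zeta_i(x))^{m_i}$ over $\overline{\C(X)}$, the left-hand side of $(\star)$ has only simple poles in $z$, at the $\zeta_i$, while the right-hand side is polynomial in $z$; so the residue at each $z=\zeta_i$ vanishes, which is precisely $d_x\zeta_i+\zeta_i^2\omega_2+\zeta_i\omega_1+\omega_0=0$. In other words every horizontal component of the divisor of $\Omega$ — its zeros as well as its poles — is $\mathcal H$-invariant. Let $W$ be the reduced horizontal part of that divisor and $n$ its degree over $X$. Since $\mathcal H$ is transverse to the generic fibre, $\Omega$ restricts there to a nonzero rational $1$-form on $\P^1$ whose divisor has degree $-2$; hence $n\ge1$. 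If $n\ge3$, then $W$ is an $\mathcal H$-invariant hypersurface meeting the generic fibre in at least three points and Lemma~\ref{lem:MultisectionH} yields a non-constant meromorphic first integral of $\mathcal H$, so we are in the first alternative.

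It remains to treat $n=1$ and $n=2$. If $n=1$, after a bundle transformation we may assume $W=\{z=\infty\}$; its invariance forces $\omega_2=0$, and on the generic fibre $\Omega$ has its only zero or pole at $\{z=\infty\}$, so $\Omega|_{\mathrm{fibre}}=c(x)\,dz$ and therefore $\Omega=c(x)\,\eta$. Then $0=d\Omega=(dc-c\omega_1)\wedge\eta$ forces, matching $dz$-components, $\omega_1=d\log c$, and the substitution $z\mapsto c(x)z$ brings $\Omega$ to the form $dz+\omega$ with $\omega=c\omega_0$, which one checks to be closed using $d\omega_0=\omega_0\wedge\omega_1$. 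If $n=2$ and $W$ splits over $X$ into two sections, trivialize so that these are $\{z=0\}$ and $\{z=\infty\}$; invariance forces $\omega_0=\omega_2=0$ and $\omega_1$ closed, and $\Omega=c(x)z^k\eta$ for some $k\in\Z$, with $d\Omega=0$ equivalent to $dc=(k+1)c\,\omega_1$. For $k=-1$ this gives $c\in\C^*$ and $\Omega=c\bigl(\tfrac{dz}{z}+\omega_1\bigr)$; for $k\ne-1$ one finds $\omega_1=\tfrac1{k+1}d\log c$ and then $z^{k+1}c$ is a non-constant meromorphic first integral of $\mathcal H$. Finally, if $n=2$ and $W$ is an irreducible double multisection, I would pass to the two-fold cover $p\colon X'\to X$ that splits $W$ into two sections exchanged by the deck involution $\tau$, apply the previous sub-case to $p^*\Omega$, and observe that the outcome $p^*\Omega=c\bigl(\tfrac{dz}{z}+\omega'\bigr)$ is incompatible with $\tau$-invariance — since $\tau$ acts by $z\mapsto\mu(x')/z$, the identity $\tau^*(p^*\Omega)=p^*\Omega$ would force a spurious term $2c\,\tfrac{dz}{z}$ to equal a $1$-form pulled back from $X'$ — so $p^*\mathcal H$ must admit a first integral $g$, and a $\tau$-invariant symmetric function of $g$ and $\tau^*g$ (one of $g+\tau^*g$, $g\,\tau^*g$ being non-constant) descends to a meromorphic first integral of $\mathcal H$.

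The step I expect to be the main obstacle is the case $n=2$: one must ensure that the normal form in the statement is reached over $X$ itself, with no auxiliary cover, which is exactly why the irreducible double multisection has to be isolated and shown to fall into the first-integral alternative. Alongside this, the bookkeeping in $(\star)$ — in particular the fact that the \emph{horizontal zeros} of $\Omega$, not only its poles, are $\mathcal H$-invariant, which is what keeps $W$ under control — requires care.
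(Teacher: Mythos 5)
Your proof is correct and follows essentially the same route as the paper: invariance of the divisor of the closed form $\Omega$, case division according to the number $n$ of points where its horizontal part meets a generic fibre, Lemma~\ref{lem:MultisectionH} for $n\ge 3$, and direct normalization for $n=1,2$. The one place where you genuinely diverge is the $n=2$ case with an irreducible bisection. You keep that case alive, split it on a double cover, and eliminate the normal form $c\left(\frac{dz}{z}+\omega'\right)$ by the deck involution $z\mapsto \mu/z$, so that only the first-integral alternative survives and descends via symmetric functions; this is correct. The paper instead observes that the case never occurs: on a generic fibre the two points of $\mathrm{div}(\Omega)$ are either one zero and one pole, or two simple poles with opposite non-zero residues, and since the residue of a closed $1$-form is constant along each irreducible component of its polar divisor, the two branches cannot belong to a single irreducible hypersurface. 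The paper's route is shorter; yours costs a covering but avoids the residue argument.

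One small incompleteness to repair: your residue computation in $(\star)$ proves the $\mathcal H$-invariance only of the components $\{z=\zeta_i\}$ of $\mathrm{div}(f)$, whereas $\mathrm{div}(\Omega)=\mathrm{div}(f)+\mathrm{div}(\eta)$ also contains $\{z=\infty\}$ (with multiplicity $-2$ plus the order of $f$ there), and you invoke the invariance of $\{z=\infty\}$ to get $\omega_2=0$ in the cases $n=1$ and $n=2$. Either redo the computation in the chart $w=1/z$, where $\eta=-w^{-2}\left(dw-\omega_2-w\omega_1-w^2\omega_0\right)$ and the same residue argument at $w=0$ gives the invariance of $\{z=\infty\}$ whenever it carries a non-zero multiplicity in $\mathrm{div}(\Omega)$, or simply quote the general fact --- this is what the paper does --- that both the zero and the polar divisor of a closed meromorphic $1$-form are invariant by the foliation it defines. (Alternatively, in each of these cases $\omega_2=0$ also drops out directly from the $dz$-component of the closedness equation, so nothing downstream is actually affected.)
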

\begin{proof}
Denote by ${(\Omega)}_0$ and ${(\Omega)}_\infty$ the zero and polar divisor of $\Omega$. Since $\Omega$ is closed, the support $H$ of  ${(\Omega)}_0-{(\Omega)}_\infty$ is $\mathcal H$-invariant.
The hypersurface $H$ intersects the generic fiber of $P\to X$ in $n\ge1$ distinct points
($1$-forms have non trivial divisor on $\mathbb P^1$). If $n\ge3$, then $\mathcal H$ has a first integral.

If $n=2$, then $H$ splits into the union of two meromorphic sections, $H_0$ and $H_\infty$ say; indeed,
either they are zero and polar locus of $\Omega$, or they are both simple pole, but with opposite residue.
After trivialization of $P$ sending them to $z=0$ and $z=\infty$ respectively, we get that $\mathcal H$
is defined by a closed $1$-form $\Omega':=\frac{dz}{z}+\omega$; but $\Omega=f\cdot \Omega'$ is closed
as well, which implies that $f$ is a first integral. We are thus in one of the two items depending on $f$ is constant or not.

Finally, when $n=1$, we can assume $H=\{z=\infty\}$ and we get that $\Omega=f(dz+\omega_0+z\omega_1+z^2\omega_2)$
restrict to a general fiber as $c\cdot dz$, with $c\in\C^*$. This means that $f$ does not depend on $z$
and, by gauge transformation $z\mapsto cz$, we can assume $f=1$. The $1$-form $\Omega$ is closed
if, and only if, $d\omega_0=\omega_1=\omega_2=0$.
\end{proof}

\subsection{Generically finite morphisms and factorizations}

In order to prove Theorems \ref{THM:C} and \ref{THM:D}, it will be useful
to blow-up the manifold $X$ and  pass to a finite cover in order to simplify the foliation.
At the end, we will need the following descent lemma to come back to a conclusion on $X$.

\begin{prop}\label{P:pushRiccati}
Let $(P,\mathcal H)$ be a Riccati foliation over $X$ and assume that $f^*(P,\mathcal H)$ is not defined
by a closed rational $1$-form for any dominant morphism $f : Y \to X$. 
If the pull-back $\varphi^*(P,\mathcal H)$ via generically finite morphism $\varphi:\tilde X\to X$ factors through a curve,
then the same holds true for $(P,\mathcal H)$.
\end{prop}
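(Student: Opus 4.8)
The plan is to exploit the factorization through a curve upstairs together with the rigidity coming from the hypothesis that $(P,\mathcal H)$ is not defined by a closed rational $1$-form after any pull-back. First I would replace $\varphi:\tilde X\to X$ by a Galois morphism: choose a finite Galois covering $\psi:\hat X\to X$ factoring through $\varphi$, with Galois group $G=\Gamma$; since $\psi^*(P,\mathcal H)=\psi^*\varphi^*(P,\mathcal H)/\dots$, more precisely since $\hat X\to\tilde X$ is again generically finite, the pull-back to $\hat X$ still factors through a curve. So without loss of generality $\varphi:\tilde X\to X$ is Galois with group $G$. Let $g:\tilde X\dashrightarrow C$ and $(\tilde P,\tilde{\mathcal H})=\varphi^*(P,\mathcal H)$ together with $(P_C\to C,\mathcal H_C)$ and $\Psi:\tilde P\dashrightarrow P_C$ realize the factorization, so $\tilde{\mathcal H}=\Psi^*\mathcal H_C$.

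The key point is that this factorization is essentially unique, hence $G$-equivariant. Indeed, the foliation $\tilde{\mathcal H}$ on $\tilde P$ determines the factorization in the following sense: the generic fibre of $g$ is characterised, among subvarieties of $\tilde X$, as the maximal one along which the restricted Riccati foliation has trivial (more precisely: virtually abelian / constant-transverse-structure) behaviour — concretely, $g$-fibres are the closures of leaves of the codimension-one foliation on $\tilde X$ obtained by pushing down, via $\pi$, an $\tilde{\mathcal H}$-invariant algebraic structure. More robustly, I would argue as follows: for $\gamma\in G$, the pull-back $\gamma^*\tilde{\mathcal H}=\tilde{\mathcal H}$ (since $\tilde{\mathcal H}=\varphi^*\mathcal H$ is $G$-invariant by construction), so $g\circ\gamma:\tilde X\dashrightarrow C$ also factors $\tilde{\mathcal H}$. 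If $g$ and $g\circ\gamma$ had different generic fibres, then a generic fibre $F$ of $g$ would dominate $C$ under $g\circ\gamma$; restricting $\tilde{\mathcal H}$ to $P|_F$ we would get, on one hand, that $\tilde{\mathcal H}|_{P|_F}$ is pulled back from $\mathcal H_C$ under a dominant map $F\to C$, so it carries a genuinely transverse structure; on the other hand $\tilde{\mathcal H}|_{P|_F}$ is pulled back from $C$ under $g|_F$ which is constant, hence trivial along $F$. This forces the transverse structure of $\mathcal H_C$ to be trivial, i.e. $\mathcal H_C$ — and therefore $\tilde{\mathcal H}$, and therefore (by Proposition \ref{P:pushRiccati}'s analogue for closed forms, or directly) $(P,\mathcal H)$ — is defined by a closed rational $1$-form, contradicting the hypothesis. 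Hence $G$ permutes the fibres of $g$, and so acts on $C$; set $C_0=C/G$ and let $f:X\dashrightarrow C_0$ be the induced rational map.

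It remains to descend the Riccati foliation itself. Since $G$ permutes $g$-fibres $G$-equivariantly and acts on $\tilde P$ compatibly with $\pi$ and with the map to $P_C$, one gets a $G$-action on $P_C\to C$ lifting the $G$-action on $C$ and preserving $\mathcal H_C$; taking the quotient, $(P_C,\mathcal H_C)/G$ is a Riccati foliation over $C_0$ (after resolving, a locally trivial $\mathbb P^1$-bundle) whose pull-back to $C$ is $(P_C,\mathcal H_C)$ — here one uses that $\mathbb P^1$-bundles of the form $\mathbb P(E)$ descend along the ramified cover $C\to C_0$, possibly after twisting, which is elementary for curves. Finally, chasing the commutative diagram $\tilde P\to P_C\to P_C/G$ versus $\tilde P\to P\dashrightarrow ?$, the $G$-invariance of all the maps shows that $(P,\mathcal H)$ is the pull-back of $(P_C,\mathcal H_C)/G$ under $f:X\dashrightarrow C_0$, up to the usual birational and $\{\pm1\}$-twist ambiguity of Riccati foliations; should that ambiguity produce a nontrivial deck transformation $\Phi$ of $P$ over $X$ with $\Phi^*\mathcal H=\mathcal H$, then Lemma \ref{L:uniquelift} would give that $(P,\mathcal H)$ becomes defined by a closed rational $1$-form after a degree $\le 2$ cover, again contradicting the hypothesis. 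So $(P,\mathcal H)$ itself factors through the curve $C_0$.

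The main obstacle I anticipate is the rigidity step: showing that the factorization $g$ is $G$-equivariant, i.e. that $\tilde{\mathcal H}$ cannot factor through two genuinely different fibrations. The argument sketched above reduces this to the statement that a Riccati foliation which is \emph{simultaneously} pulled back from two fibrations with distinct generic fibres must be defined by a closed rational $1$-form — this is where one really uses the structure theory (e.g. Lemma \ref{lem:MultisectionH}, Proposition \ref{prop:RiccClosedForm}) to see that such extra symmetry forces a trivial transverse structure. Handling the birational subtleties — base points of $\Psi$ and $g$, non-smoothness of the intermediate covers, and making "restriction of $\tilde{\mathcal H}$ to a fibre" precise — is routine but requires care, and is dealt with exactly as in the proof of Theorem \ref{T:criteriofactor} above.
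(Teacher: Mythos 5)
Your proposal is correct and follows essentially the same route as the paper's proof: reduce to a Galois cover, show the Galois group permutes the fibres of the factorizing fibration by deriving a rational first integral (hence a closed defining $1$-form) from the coexistence of two distinct fibrations, invoke Lemma \ref{L:uniquelift} to get a unique, hence equivariant, lift of the group action to the $\mathbb P^1$-bundle, and pass to the quotient. The minor phrasing differences (``trivial transverse structure'' versus the paper's ``rational first integral'') do not change the argument.
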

\begin{proof}It is very similar to the proof of \cite[Lemma 3.6]{Corlette-Simpson}.
Maybe composing by a generically finite morphism, we can assume that
\begin{enumerate}
\item $\varphi$ is Galois in the sense that there is a finite group $G$ of birational transformations
acting on $\tilde X$ and acting transitively on a general fiber;
\item there is a morphism $\tilde f:\tilde X\to\tilde C$ with connected fibers and a Riccati foliation $(\tilde P_0,\tilde{\mathcal H_0})$
over $\tilde C$ such that its pull-back on $\tilde X$ is birationally equivalent to $(\tilde P,\tilde{\mathcal H}):=\varphi^*(P,\mathcal H)$.
\end{enumerate}
In consequence, for any $g\in G$, $g^*(\tilde P,\tilde{\mathcal H})$ is birationally equivalent to $(\tilde P,\tilde{\mathcal H})$
and the Riccati foliation $(\tilde P,\tilde{\mathcal H})$ factors through $\tilde f\circ g: \tilde X\dashrightarrow\tilde C$.
Consider a general fiber $Z$ of $\tilde f$. The Riccati foliation $(\tilde P,\tilde{\mathcal H})\vert_Z$ restricted to $Z$
is birationally equivalent to the trivial Riccati foliation on $Z$, and thus  admits a rational first integral.
If the map $\tilde f\circ g$ were dominant in restriction to $Z$,
this would imply that $(\tilde P_0,\tilde{\mathcal H_0})$ also has a rational  first integral, and the same for $(\tilde P,\tilde{\mathcal H})$
and $(P,\mathcal H)$, contradiction. Thus $g$ (and $G$) must permute general fibers of $\tilde f$ and acts on $\tilde  C$.
Moreover, $g^*(\tilde P_0,\tilde{\mathcal H_0})$ is birational to $(\tilde P_0,\tilde{\mathcal H_0})$ for all $g\in G$. 
Lemma \ref{L:uniquelift} implies that over each $g: \tilde X \dashrightarrow  \tilde X$ there exists a unique birational map $\hat g : \tilde P \dashrightarrow  \tilde P$ such that $\hat g^* \tilde {\mathcal H} = \tilde{\mathcal H}$, and a similar statement holds true for the action of
 $G$ on $\tilde C$. Therefore, we get an action of $G$ on the diagram
$$\xymatrix{
    \tilde P \ar[r] \ar[d] & \tilde P_0 \ar[d] \\
    \tilde X \ar[r]_{\tilde f}  & \tilde C
  }
  $$
which preserves the Riccati foliations. Passing to the quotient, we get a commutative diagram
$$\xymatrix{
    P \ar[r] \ar[d] & P_0 \ar@{.>}[d] \\
    X \ar[r]_f  & C
  }$$
where $P_0\dashrightarrow C$ has $\P^1$ as a general fiber. Moreover, the quotient foliation $\mathcal H_0$
on $P_0$ is transverse to the general fiber and thus of Riccati type.
\end{proof}

\subsection{Monodromy  and factorization}

We say that a Riccati foliation $(P,\mathcal H)$ has regular singularities when it can be locally induced by
a flat meromorphic linear connection having regular singularities in the sense of \cite[Chapter II]{Deligne}.
Like in the linear case, any two  Riccati foliations $(P,\mathcal H)$ and $(P',\mathcal H')$ having  regular singularities have the same monodromy if and only if there exists a birational bundle map $\phi: P \dashrightarrow P'$ such that $\phi^*\mathcal H' = \mathcal H$, see \cite[Lemma 2.13]{GaelJorge}.
In particular, if the monodromy factors through a curve, then so does the Riccati foliation by remark \ref{R:curvecase}.
The next proposition, borrowed from \cite[Proposition 2.14]{GaelJorge}, tells us what remains true in the irregular case.

\begin{prop}\label{P:pullRiccati}
Let $(P,\mathcal H)$ be a Riccati foliation over $X$. Suppose there exists a morphism $f: X \to C$ with connected fibers such that the polar divisor  $(\mathcal H)_{\infty}$  of $\mathcal H$  intersects the general fiber of $f$ at most on regular singularities;
assume moreover that
the monodromy representation $\rho$ of $(P,\mathcal H)$ factors through $f$, i.e.  there exists a divisor $F$ supported on finitely many  fibers of $f$ and a representation $\rho_0$ from the fundamental group of $C_0= f(X -  |(\mathcal H)_{\infty} + F|)$ to $\PSL(\C)$ fitting in the diagram below.
\begin{center}
\begin{tikzpicture}
  \matrix (m) [matrix of math nodes,row sep=2em,column sep=4em,minimum width=2em]
  {
    \pi_1(X - |(\mathcal H)_{\infty} + F| ) & \PSL(\C)  \\
      \pi_1(C_0 ) & \\};
  \path[-stealth]
    (m-1-1) edge node [above] {$\rho$} (m-1-2)
           edge node [left] {$f_*$} (m-2-1)
    (m-2-1)  edge node [below] {$\rho_0$} (m-1-2)  ;
\end{tikzpicture}
\end{center}
Then $(P,\mathcal H)$ factors through $f: X \to C$.
\end{prop}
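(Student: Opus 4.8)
The plan is to construct the factoring Riccati foliation $(P_0, \mathcal H_0)$ over $C$ directly, and then to check that its pull-back agrees with $(P, \mathcal H)$. The representation $\rho_0 : \pi_1(C_0) \to \PSL(\C)$ together with Deligne's extension yields a logarithmic (hence regular) Riccati foliation $(P_0, \mathcal H_0)$ over a suitable compactification of $C_0$, which we may take to be $C$ itself (adjusting $F$ so that $C_0 = C \setminus \{\text{finitely many points}\}$, including the images of the components of $F$ and of the non-regular part of $(\mathcal P)_\infty$). By construction $f^*(P_0,\mathcal H_0)$ is a Riccati foliation over $X$ whose monodromy representation is $\rho_0 \circ f_* = \rho$, the monodromy of $(P,\mathcal H)$; moreover, $f^*(P_0,\mathcal H_0)$ has regular singularities, being the pull-back of a logarithmic object by a morphism.

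\textbf{Key steps.} First I would set up the two Riccati foliations on a common ambient $\P^1$-bundle over $X$, both birationally trivialized. The restriction to a general fiber $Z$ of $f$ is then the crux: on $Z$, the foliation $f^*(P_0,\mathcal H_0)\vert_Z$ is the \emph{trivial} Riccati foliation (pull-back from a point), and, by hypothesis, $(P,\mathcal H)\vert_Z$ has polar divisor meeting $Z$ only at regular singularities and trivial monodromy (since $\rho$ restricted to $\pi_1(Z \cap X_0)$ is trivial, being in the image of $f_*$ of a fiber, hence in the kernel factor). Thus $(P,\mathcal H)\vert_Z$ is a regular Riccati foliation with trivial monodromy, so by \cite[Chapter II]{Deligne} (uniqueness of regular connections with given monodromy, cf. \cite[Lemma 2.13]{GaelJorge}) it is birationally equivalent to the trivial one; in particular it admits a rational first integral on $Z$. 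Comparing the two first integrals on a general fiber and using that both foliations have the same monodromy on all of $X_0$, one produces a birational bundle map $\phi : P \dashrightarrow f^*P_0$ with $\phi^*(f^*\mathcal H_0) = \mathcal H$ over a Zariski-dense open set of $X$; spreading out over the generic fiber and then over $C_0$ gives the factorization $(P,\mathcal H) = f^*(P_0,\mathcal H_0)$.

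\textbf{Main obstacle.} The delicate point is that $(P,\mathcal H)$ itself need \emph{not} have regular singularities — only its restriction to a general fiber of $f$ does — so one cannot simply invoke the regular-case rigidity statement on all of $X$. The argument must be genuinely fiberwise: establish the isomorphism of Riccati foliations on a general fiber $Z$ (where everything is regular), then propagate it horizontally. The horizontal propagation uses that the monodromy of $(P,\mathcal H)$ on $X_0$ factors through $f$, so the identification of the two foliations along one fiber is transported consistently along paths in $C_0$ — this is exactly where the commutativity of the diagram is used, and it is the same mechanism as in \cite[Proposition 2.14]{GaelJorge}, from which this proposition is borrowed; I would follow that proof, taking care that the possible irregularity of $(\mathcal P)_\infty$ is confined to the vertical divisor $F$ and therefore does not interfere with the fiberwise comparison.
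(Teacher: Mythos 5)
First, note that the paper does not actually prove this proposition: it is stated as ``borrowed from \cite[Proposition 2.14]{GaelJorge}'', so there is no internal proof to compare against. Judged on its own merits, your fiberwise analysis is correct and is indeed the heart of the matter: on a general fiber $Z$ of $f$ the restricted Riccati foliation is regular with trivial monodromy, hence birationally trivial by Deligne rigidity, hence admits a rational first integral.

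However, there is a genuine gap in your global strategy, and it occurs exactly in the case the proposition is designed for. You propose to take $(P_0,\mathcal H_0)$ to be the \emph{Deligne (logarithmic) extension} of $\rho_0$ over $C$ and then to produce a birational bundle map $\phi : P \dashrightarrow f^*P_0$ with $\phi^*(f^*\mathcal H_0)=\mathcal H$. This cannot work when $\mathcal H$ is irregular along fibers contained in $F$ --- which is precisely the situation in which Proposition \ref{P:pullRiccati} is invoked in the proof of Theorem \ref{THM:C}. Indeed, $f^*(P_0,\mathcal H_0)$ is regular everywhere, and a foliation that coincides with it on a Zariski-dense open set would itself be regular; so if $\mathcal H$ is irregular, no rational bundle map $\phi$ with $\phi^*(f^*\mathcal H_0)=\mathcal H$ can exist. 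Concretely, the two projective connections are holomorphically gauge-equivalent over $f^{-1}(C_0)$ because they have the same monodromy there, but that gauge transformation does \emph{not} extend meromorphically across an irregular fiber (this is the Stokes phenomenon: e.g.\ $d+\mathrm{diag}(1,-1)\,dx/x^2$ and the trivial connection have the same trivial monodromy on $\C^*$ yet are conjugated only by $\mathrm{diag}(e^{-1/x},e^{1/x})$, which has an essential singularity at $0$). So ``spreading out over $C_0$'' produces a comparison only over $f^{-1}(C_0)$, not a rational map on $P$, and the factorization in the paper's sense (rational maps on the projective $X$ and $P$) is not obtained. The correct route --- the one taken in \cite[Proposition 2.14]{GaelJorge} and mirrored in Proposition \ref{P:reduction2} of this paper --- is to build the target intrinsically from $(P,\mathcal H)$: the fiberwise rational first integrals cut out a codimension-two subfoliation $\mathcal G\subset\mathcal H$ on $P$ whose generic leaves are algebraic; the space of leaves of $\mathcal G$ (via the Hilbert/Chow scheme) is a $\P^1$-bundle $P_0$ over $C$ carrying the quotient Riccati foliation $\mathcal H_0$, and the quotient map $\Phi:P\dashrightarrow P_0$ is rational on all of $P$ and automatically carries the irregular data of $\mathcal H$ into $\mathcal H_0$. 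Your argument is complete only in the everywhere-regular case, where it reduces to the Riemann--Hilbert rigidity statement already recorded in the paragraph preceding the proposition.
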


\begin{remark}For a Riccati or a linear connection, to have regular singularities  is a local property which,
if satisfied at some point of the polar divisor, remains true all along the irreducible component.
The support of the polar divisor splits into regular and irregular components.
The assumption in Proposition \ref{P:pullRiccati} that the general fiber of $f$ intersect only regular singularities
just means that the fiber intersects only regular components of the polar locus. Equivalently,
the connection (or Riccati foliation) restricts to the fiber as a regular singular connection (resp. Riccati foliation).
\end{remark}

\subsection{Reduction to the two-dimensional case}\label{sec:ReductionDim2}

The proposition below
allows us to reduce our study of Riccati foliations over arbitrary projective manifolds
to study of Riccati foliation over projective surfaces.

\begin{prop}\label{P:reduction2}
Let $(P,\mathcal H)$ be a Riccati foliation over a projective manifold $X$ and assume it has no rational first integral. If the restriction of
$(P,\mathcal H)$ to a sufficiently  general surface $S \subset X$ factors through a curve, then the same holds true
for $(P,\mathcal H)$ over $X$.
\end{prop}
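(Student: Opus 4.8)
The plan is to use a Lefschetz-type argument to relate the fundamental group of $X$ (minus the polar divisor) to that of a general surface section, and then transport the factorization downstairs. First I would choose a general complete intersection surface $Z \subset X$ obtained by intersecting $X$ with general members of a very ample linear system. By the Lefschetz hyperplane theorem (in the quasiprojective version due to Goresky--MacPherson, or Hamm--L\^{e}), the inclusion $Z \setminus |(\mathcal H)_\infty| \hookrightarrow X \setminus |(\mathcal H)_\infty|$ induces an isomorphism on fundamental groups once $\dim X \ge 3$ (surjectivity already holds for $\dim X = 2$, but we only need the case $\dim X \ge 3$, reducing dimension one step at a time). Consequently the monodromy representation $\rho_{\mathcal H}$ of $(P,\mathcal H)$ and the monodromy $\rho_{\mathcal H|_Z}$ of its restriction to $Z$ have \emph{the same} image, and in fact $\rho_{\mathcal H}$ is determined by $\rho_{\mathcal H|_Z}$.

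Next I would treat two cases according to whether $(P,\mathcal H)$ has regular or irregular singularities. Since $(P, \mathcal H)$ has no rational first integral, neither does its restriction to a sufficiently general $Z$ (a rational first integral on $Z$ for general $Z$ would, by a standard relative argument, spread out to one on $X$). So by hypothesis $(P,\mathcal H)|_Z$ genuinely factors through a curve. If the Riccati foliation has regular singularities, then by the discussion preceding Proposition \ref{P:pullRiccati} (and \cite[Lemma 2.13]{GaelJorge}), $(P,\mathcal H)$ is determined up to birational bundle equivalence by its monodromy; the Lefschetz isomorphism shows this monodromy factors through a curve $C$ (pull back the orbicurve structure from $Z$ via a fibration on $X$ extending the one on $Z$ — here one uses that the fibration $Z \to C_0$ is cut out by a pencil that, for general $Z$, is the restriction of a pencil/fibration structure on $X$, or one invokes Totaro/Neeman-type results to produce the fibration on $X$ directly from the proportional boundary divisors). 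Then Proposition \ref{P:pullRiccati} (with the general fiber meeting only regular components) gives the factorization of $(P,\mathcal H)$ through $f : X \to C$.

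For the irregular case I would argue more directly at the level of the Riccati foliation rather than its monodromy. The factorization of $(P,\mathcal H)|_Z$ through a curve is witnessed by a morphism $Z \to C$ with connected fibers such that $\mathcal H|_Z$ is the pull-back of a Riccati foliation on $C$; in particular $\mathcal H|_Z$ is tangent to the fibers of $Z \to C$. One shows that the foliation on $X$ induced by this fibration on general $Z$'s patches together (the fibers of $Z \to C$ sweep out a codimension-one foliation $\mathcal G$ on $X$, algebraically integrable because it is so on each $Z$ and these cover $X$), giving a fibration $f : X \dashrightarrow C'$ (after Stein factorization) whose restriction to general $Z$ recovers $Z \to C$. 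Since $\mathcal H$ is tangent to $\mathcal G$ on a Zariski-dense union of surfaces, it is tangent to $\mathcal G$ everywhere, hence $\mathcal H$ is constant along the fibers of $f$; the quotient foliation on the $\mathbb P^1$-bundle over $C'$ is transverse to the general fiber, so it is of Riccati type, and $(P,\mathcal H) = f^* (P_0, \mathcal H_0)$.

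The main obstacle I expect is the construction of the fibration $f : X \to C$ on the full manifold $X$ out of the fibrations on general surface sections --- ensuring that the curve $C$ and the fibration structure are independent of the chosen $Z$ and extend coherently. The cleanest route is probably to avoid reconstructing $f$ by hand and instead: (i) use the Lefschetz isomorphism to transport the boundary-divisor configuration (disjoint divisors with proportional Chern classes, or a divisor with torsion normal bundle, coming from the polar components that are "swallowed" by the fibration on $Z$) back to $X$, and (ii) apply Theorem \ref{T:Totaro} or Theorem \ref{T:Neeman} on $X$ itself to get the fibration, then feed it into Proposition \ref{P:pullRiccati} (regular case) or into the tangency argument above (irregular case). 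A secondary subtlety is checking that "sufficiently general" can be chosen so that simultaneously: $Z$ misses the indeterminacy and bad loci, the Lefschetz theorems apply, no rational first integral appears on $Z$ unless one exists on $X$, and the general fiber of the eventual $f$ meets only regular components of $(\mathcal H)_\infty$ --- all of which are open dense conditions, so a single general $Z$ works.
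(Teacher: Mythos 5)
Your overall strategy (restrict, transport, rebuild a fibration on $X$) is reasonable, but both branches of your argument stop exactly at the point where the real work lies, and the Lefschetz isomorphism does not supply what is missing. In the regular case, knowing that $\pi_1(Z-|(\mathcal H)_\infty|)\to\pi_1(X-|(\mathcal H)_\infty|)$ is an isomorphism tells you that $\rho$ factors \emph{group-theoretically} through $\pi_1^{\mathrm{orb}}(C_0)$ via $(f_Z)_*$, but it does not produce a morphism $f:X\to C$ realizing this factorization; your parenthetical suggestions (``the pencil on $Z$ is the restriction of a pencil on $X$'', ``invoke Totaro/Neeman on proportional boundary divisors'') are precisely the unproved step, and neither is automatic. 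In the irregular case, the assertion that ``the fibers of $Z\to C$ sweep out a codimension-one foliation $\mathcal G$ on $X$'' is the crux and is not justified: for two general surfaces $Z,Z'$ through a point $x$, nothing in your argument shows that the fiber of $Z\to C$ through $x$ and the fiber of $Z'\to C'$ through $x$ lie on a common divisor of $X$, which is what ``patching'' requires. You correctly identify this as the main obstacle, but identifying it is not resolving it, so as written the proof has a genuine gap.

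The paper closes this gap by working on the total space $P$ rather than on $X$, and by using a \emph{canonical} object. The factorization of $(P,\mathcal H)|_{\pi^{-1}(Z)}$ through a curve shows that the leaf of $\mathcal H$ through a general point $p\in P$ contains a positive-dimensional algebraic subvariety; by \cite[Lemma 2.4]{Croco3} the germ of leaf at a general point contains a \emph{unique maximal} germ of algebraic subvariety $A_p$, and this uniqueness is what makes the $A_p$ (obtained from varying $Z$) fit together into a subfoliation $\mathcal G\subset\mathcal H$ with algebraic leaves of codimension at most two, such that $\mathcal H$ is pulled back from the space of leaves of $\mathcal G$ (a curve or a surface). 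The hypothesis that $\mathcal H$ has no rational first integral rules out codimension one, so $\mathcal G$ has codimension two; its general leaf projects generically \'etale onto (an open subset of) a divisor $D_L\subset X$, and the family of these divisors yields the curve and the factorization. If you want to keep your architecture, the fix is to replace your ad hoc patching by this maximality/uniqueness argument (or cite \cite[Lemma 2.4]{Croco3} directly); the regular/irregular case division and the Lefschetz input then become unnecessary.
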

\begin{proof}
Denote by $\pi: P \to X$ the natural projection.
Let $(P_S, \mathcal H_S)$ be the restriction of $(P,\mathcal H)$  to $S \subset X$, i.e.
$P_S = P\vert_{\pi^{-1}(S)}$ and $\mathcal H_S = \mathcal H\vert_{\pi^{-1}(S)}$.
By assumption, we get a rational bundle map $\phi : P_S \dashrightarrow P_0$ such that $\phi^* \mathcal H_0 =  \mathcal H_S$.
This shows that $\mathcal H_S$  contains a foliation by algebraic curves.

Applying the same argument for different choices of $S$, we obtain that
through a general point $p \in P$ the leaf of $\mathcal H$ through $p$ contains an algebraic subvariety $A_p$.
It is known that the germ of leaf at a general point contains an unique germ of algebraic subvariety
maximal with respect to inclusion which turns out to be irreducible, see  the proof of \cite[Lemma 2.4]{Croco3}.
In our setup, we can thus assume that $A_p$ has codimension at most two, since otherwise we would be able to enlarge $A_p$ by  choosing an appropriate $S$ and applying the above argument.

It follows from \cite[Lemma 2.4]{Croco3}
that $\mathcal H$ contains a foliation $\mathcal G$ with all leaves algebraic and having
codimension at most  two, and that $\mathcal H$ is the pull-back of a foliation on a curve or a surface.
In the former case, we get a rational first integral for $\mathcal H$, contradiction.
Hence the codimension of $\mathcal G$ must be equal to two. Let now $L$ be a general leaf of $\mathcal G$ and
consider its projection to $X$. Since $\mathcal H$ is tranverse to the general of fiber of $\pi$, this
projection is generically \'{e}tale and $\pi(L)$ is (a Zariski open subset) of a divisor $D_L$ on $X$.
The construction method of $\mathcal G$ makes clear that  $\pi^{-1}(D_L)$ is invariant by $\mathcal G$,
and also that the restriction of $\mathcal H$ to $\pi^{-1}(D_L)$ is birationally equivalent to
the foliation on a trivial $\P^1$-bundle over $D_L$ given by the natural projection  to $\P^1$.
This is sufficient to show that $\mathcal H$ is the pull-back of a Riccati foliation on a $\P^1$-bundle
over a curve.
\end{proof}

\begin{prop}\label{P:reduction2closed}
Let $(P,\mathcal H)$ be a Riccati foliation over a projective manifold $X$, $\dim(X)\geq 2$.
If the restriction of
$(P,\mathcal H)$ to a  general hyperplane section $Y\subset X$ 
is defined by a closed rational $1$-form, then the same holds true for $(P,\mathcal H)$ over $X$.
\end{prop}

\begin{proof}Since being defined by a closed rational $1$-form is invariant under birational transformation,
we can assume without loss of generality that $P=X\times\mathbb P^1$. 
Set $Z=Y\times\mathbb P^1\subset P$. 
If $Y$ is sufficiently general, then $Z$ is transversal to $\mathcal H$ outside a proper algebraic subset of $\mathcal H$ and we can consider $\omega$,  the closed (non zero) rational $1$-form defining the restriction $\mathcal H\vert_Z$ on $Z$.
Then, following \cite[p.47-50]{CerveauMattei} (see also \cite[\S 5.6]{CanoCerveauDeserti}),
$\omega$ extends (uniquely) as closed a meromorphic $1$-form $\Omega$ defining $\mathcal H$ on the neighborghood 
of $Z$. Let $U$ some connected neighborhood of $Y$ over which $\Omega$ is defined and then can be written as \[\Omega=a_0 dz +a_1\omega_1+...+a_n\omega_n\] where $\omega_1,...,\omega_n$ are $n=\mbox{dim}_{\mathbb C} X$ meromorphically independent $1$-forms on $X$  and the $a_i$'s are rational functions of $z$ (a projective coordinate on ${\mathbb P}^1$) with coefficients  lying in the field of meromorphic functions on $U$. The extension of those coefficients on the whole $X$  is then automatic and implies the extension of $\Omega$ through $X\times {\mathbb P}^1$.
\end{proof}

\section{Polar divisor and reduction of Riccati foliations}\label{S:polar}
In the next two sections, we will restrict our attention to Riccati foliations over projective surfaces.
Proposition \ref{P:reduction2} allows to transfer the conclusions to Riccati foliations over arbitrary projective manifolds.
The general case  will come back into play  only at the proofs of Theorems \ref{THM:C} and \ref{THM:D}.

Let  $(P,\mathcal H)$ be a Riccati foliation over a projective surface $S$.
In this section, we review the local structure of $(P,\mathcal H)$ over a neighborhood 
of a general point of $(\mathcal H)_\infty$. Maybe blowing-up $S$ and passing to a ramified
cover, we arrive to a list of nice local models that either are defined by a first integral,
or factor through a curve (a local version of Theorem \ref{THM:C}).
Let us first recall the one dimensional classification following the description
of \cite[Chapter 4, Section 1]{Brunella}.

\subsection{A review of the one dimensional case}\label{SecReview1D}

Here, we follow the description of \cite[Chapter 4, Section 1]{Brunella} which 
corresponds to Levelt-Turritin normalization.

\begin{prop}\label{prop:BrunellaTurritin}
Let $(P\to \Delta,\mathcal H)$ be a Riccati foliation over a disc $0\in\Delta\subset\C$.
Maybe after passing to the two-fold cover 
$$\Delta\to\Delta\ ;\ x\mapsto x^2,$$
and after bimeromorphic bundle trivialization and change of $x$ coordinate, 
the Riccati $1$-form $\Omega$ defining the foliation fits into one of the following types:
\begin{itemize}
\item  $\Omega=dz$ (trivial case)
\item {\bf Log${}^{\mathbb G_m}$}: $\frac{\Omega}{z}=\frac{dz}{z}+\lambda\frac{dx}{x}$, with $\lambda\in\C\setminus\Z$;
\item {\bf Log${}^{\mathbb G_a}$}: $\Omega=dz+\frac{dx}{x}$;
\item {\bf Irreg}: $\Omega=dz+\left(\frac{dx}{x^{k+1}}+\lambda\frac{dx}{x}\right)z+(a(x)z^2+b(x)z+c(x))dx$ with $a,b,c$ holomorphic,
$k\in\Z_{\ge 1}$ and $\lambda\in\C$.
\end{itemize}
\end{prop}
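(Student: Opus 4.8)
The plan is to reduce a meromorphic Riccati equation over the disc to its normal form by the standard mechanism of Levelt--Turrittin theory, translated into the language of Riccati $1$-forms. First I would write the Riccati $1$-form as $\Omega = dz + (\omega_0 + \omega_1 z + \omega_2 z^2)$, where $\omega_i = a_i(x)\,dx$ are meromorphic at $0$, or equivalently pass to the linear $\sl$-connection $d + A$ with $A = \begin{pmatrix} \alpha & \beta \\ \omega & -\alpha \end{pmatrix}$ a matrix of meromorphic $1$-forms, trace-free. The claim has no content away from the pole, so I would assume a genuine pole at $x=0$, of some Poincar\'e rank $k \ge 1$ before any normalization. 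The key dichotomy is whether the ``leading term'' of the connection (the coefficient of highest pole order, viewed as an endomorphism of $\C^2$) is nilpotent or has two distinct eigenvalues, and whether, after successive ``shearing'' gauge transformations $z \mapsto x^j z$ and blow-ups in the bundle, the pole order can be lowered.

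The main steps I would carry out, following Brunella's exposition of \cite{Brunella} (Chapter 4, Section 1), are: (1) if the connection already has at worst a simple pole, reduce to the logarithmic case; here the residue matrix is a trace-free $2\times 2$ matrix, so either it is diagonalizable with eigenvalues $\pm\lambda/2$, giving after trivialization $\Omega/z = dz/z + \lambda\,dx/x$ --- and if $\lambda \in \Z$ one can kill the pole entirely by a meromorphic gauge transformation, landing in the trivial case $\Omega = dz$ --- or the residue is a nonzero nilpotent, giving the model $\Omega = dz + dx/x$ after normalizing the nilpotent Jordan block (this is \textbf{Log}${}^{\mathbb G_a}$), or the residue is zero and the pole was removable. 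The two-fold cover $x \mapsto x^2$ is what allows us to always split the eigenvalues of the leading matrix rationally when they differ, so half-integer residues become the stated $\lambda$. (2) If the pole order is $\ge 2$, split into the regular-singular subcase (where one reduces the pole order to $1$ by Deligne's result, going back to step (1)) and the genuinely irregular subcase. In the irregular case, after the ramified cover one may assume the leading matrix is diagonalizable (the ramification being precisely what is needed to diagonalize a ``ramified'' leading term) with distinct eigenvalues $\pm c/x^{k+1}$; a sequence of gauge transformations then brings $A$ to the block-normalized shape whose Riccati form is $\Omega = dz + b(x)z^2\,dx + \bigl(\tfrac{dx}{x^{k+1}} + \lambda\tfrac{dx}{x}\bigr)z + c(x)\,dx$, with the leftover $dx/x$ term carrying the ``formal exponent'' $\lambda$ and $b,c$ holomorphic. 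The normalization of $b,c$ to holomorphic functions uses that any pole they might carry can be absorbed by further bundle modifications (blow-ups along the fiber), which is exactly why we work up to bimeromorphic bundle trivialization rather than holomorphic gauge.

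The hard part --- or rather the part requiring care rather than cleverness --- will be bookkeeping the interplay between the three operations allowed in the statement: the ramified cover $x \mapsto x^2$, the change of coordinate in $x$, and the \emph{bimeromorphic} (not holomorphic) bundle trivialization. One must check that these suffice to reach exactly the four listed models and nothing coarser: e.g. that the diagonalization in the irregular case does not require a higher cover (ramification order $2$ is enough because the bundle has rank $2$, so the Puiseux expansion of the eigenvalue splitting involves at worst a square root), and that after diagonalizing the leading part the lower-order terms can be cleaned up to the stated shape by upper/lower-triangular gauge transformations that are meromorphic in $x$. Since Brunella's book is cited as the reference and this Proposition is explicitly ``following the description of \cite[Chapter 4, Section 1]{Brunella}'', I expect the proof to be short: state the reduction to the linear $\sl$-connection, invoke Levelt--Turrittin / Brunella's classification for the formal (or ramified) normal form, and translate the four formal types into the four displayed Riccati $1$-forms, noting that the integer-$\lambda$ logarithmic case degenerates to $\Omega = dz$.
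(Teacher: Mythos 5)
The paper does not actually prove this Proposition: it is stated as a recollection of the one--dimensional classification, with the argument delegated entirely to Brunella's book (Chapter~4, Section~1) and to Levelt--Turrittin theory. Your sketch reconstructs essentially the standard argument behind that citation --- pass to the trace-free linear connection, split according to pole order and to the leading coefficient, use the double cover only to unramify the eigenvalue splitting $\pm\sqrt{\det}$ of the leading term (a square root suffices in rank two), and clean up the off-diagonal data by meromorphic shearings and elementary bundle modifications --- so in outline it is the right proof, and you correctly anticipated that the paper would give only a pointer to the literature.

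There is, however, one step that fails as written. In the logarithmic case you claim that a diagonalizable residue always yields $\frac{dz}{z}+\lambda\frac{dx}{x}$ after trivialization, and that for $\lambda\in\Z$ one can ``kill the pole entirely, landing in the trivial case.'' This ignores resonance: when the eigenvalues $\pm\lambda/2$ of the residue differ by a nonzero integer, the connection need not be holomorphically diagonalizable; its Poincar\'e--Dulac normal form is $d+\begin{pmatrix}\lambda/2&0\\ cx^{\lambda}&-\lambda/2\end{pmatrix}\frac{dx}{x}$ with possibly $c\neq 0$ (e.g.\ $\lambda=1$, whose projective monodromy is a nontrivial parabolic), and the shearing $z\mapsto x^{\lambda}z$ then lands in \textbf{Log}${}^{\mathbb G_a}$, not in the trivial model. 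So your trichotomy ``diagonalizable residue $\Rightarrow$ \textbf{Log}${}^{\mathbb G_m}$ or trivial, nilpotent residue $\Rightarrow$ \textbf{Log}${}^{\mathbb G_a}$'' misassigns these connections, even though the final list of models is unaffected. The clean invariant in the regular-singular case is the conjugacy class of the local projective monodromy (identity, parabolic, or $z\mapsto az$ with $a\neq 1$), which by Deligne determines the meromorphic gauge class and sorts the three logarithmic/trivial models correctly. A second, harmless, misattribution: the two-fold cover is never needed in the logarithmic case (half-integer residues are projectively invisible, since only the difference of the eigenvalues matters); it is needed precisely for the ramified irregular case of Remark~\ref{rem:IrregRam}, where $\sqrt{\det}$ of the leading term is a genuine Puiseux series, exactly as you say later in your own discussion of the irregular case.
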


Since the statement does not appear in this form in the litterature, 
we provide a sketch of proof as well as definitions and useful remarks.
 
Let us start with a Riccati foliation $\mathcal H$ defined near $x=0$ in $\mathbb C_x\times\mathbb P^1_z$ by 
$$dz+\left(a(x)z^2+b(x)z+c(x)\right)dx=0.$$
Here, $a,b,c$ are meromorphic at $x=0$. If $a,b,c$ are holomorphic, then 
the Riccati foliation is equivalent to $dz=0$ by a biholomorphic bundle tranformation
(choose $3$ distinct solutions and send them to $z=0,1,\infty$).
If $a,b,c$ have at most simple poles, then by a bimeromorphic bundle tranformation
we can reduce $\mathcal H$ to one of the following models (see \cite[loc.cit]{Brunella})
$$dz+\lambda\frac{dx}{x}z=0,\ \ \ dz+\frac{dx}{x}=0\ \ \ \text{or}\ \ \ dz=0$$
with $\lambda\in\mathbb C\setminus\mathbb Z$ (for $\lambda\in\mathbb Z$, 
we get $d\tilde z=0$ for $\tilde z=x^\lambda z$). Note that the trivial case can really occur even if the original Riccati admits a pole at $x=0$ (see  \cite[loc.cit]{Brunella}). So far, the coordinate $x$ 
remains unchanged. The monodromy of $\mathcal H$ around $x=0$ is respectively 
given by $z\mapsto e^{2i\pi\lambda}z$, $z\mapsto z+1$ or trivial.
This normal form is unique except the choice of $\lambda$: 
bundle transformations $z\mapsto x^nz^{\pm1}$ 
change $\lambda$ into $\pm\lambda+n$; the birational invariant is $\cos(2\pi\lambda)$.

From now on, let us assume that the Riccati equation has a multiple pole at $x=0$, even up to 
bimeromorphic bundle transformations. We are in the so-called {\bf irregular case}.

We may assume $z=\infty$ is not $\mathcal H$-invariant,
i.e. $a\not\equiv0$. Then, after a (unique) bimeromorphic bundle transformation of the form $z\mapsto f(x)z+g(x)$,
with $f,g$ meromorphic at $x=0$, $f\not\equiv0$, we may assume the Riccati equation in the form
\begin{equation}\label{eq:companion}
dz+\left(z^2+\frac{\phi(x)}{x^{n+2}}\right)dx=0.
\end{equation}
(the corresponding $\mathfrak{sl}_2$-matrix connection is in companion form).
Here, we assume $\phi$ holomorphic non vanishing and $n\in\mathbb Z_{\ge0}$ 
is the order of pole. Define the {\bf irregularity index} (or Poincar\'e-Katz rank)
to be 
$$\kappa:=\frac{n}{2}\in\frac{1}{2}\mathbb Z_{\ge0}$$
(see \cite{Andre}). By an additional bundle transformation $z\mapsto x^{k+1}z$
with $k=\kappa$ or $\kappa+\frac{1}{2}$
(depending on the parity of $2\kappa$) we get the form
$$dz+\frac{z^2+(k+1)x^kz+\phi(x)}{x^{k+1}}dx=0.$$
Then we cannot decrease the order of poles by further bimeromorphic bundle transformations
(here we actually assume $\kappa>0$ otherwise we are back to the logarithmic case).

We say that we are in the {\bf unramified case} if $\kappa\in\mathbb Z_{>0}$, which is equivalent 
to say that $\phi(0)\not=0$. In this case, the Riccati foliation has two singular points, 
and through each of them passes a (unique) formal $\mathcal H$-invariant section,
i.e. a solution $z=f(x)$ of the Riccati equation with $f(x)$ a possibly divergent power series.
After sending these two formal leaves to $z=0$ and $z=\infty$ by a formal bundle transformation,
the Riccati equation becomes
$$\frac{dz}{z}+\omega=0\ \ \ \text{with}\ \ \ \omega=\frac{\tilde\phi(x)}{x^{k+1}}dx$$
with $\tilde\phi(x)$ formal power series. Usually, we kill the holomorphic part of $\omega$
by a last (formal) bundle modification of the form $z\mapsto h(x)z$, and the principal part 
of $\omega$ is an invariant by birational bundle transformations. Instead of this, 
we can use a (formal) change of $x$-coordinate to put the $1$-form into the normal form 
$\omega=\frac{dx}{x^{k+1}}+\lambda\frac{dx}{x}$ (see \cite[proof of Proposition 1.1.3]{Frankpseudo}).
In fact, a combination of both operations shows that, after analytic change of $x$-coordinate
and holomorphic bundle transformation, we can put the Riccati equation into the form
{\bf Irreg} of Proposition \ref{prop:BrunellaTurritin}: we use analytic approximation of 
formal solutions to get $a,c$ holomorphic and then change $x$-coordinate up to the order $k$
to normalize the principal part of $\omega$. The formal normal form (see also \cite[Section IV.1.2]{MartinetRamis1})
\begin{equation}\label{eq:FormalNormFormUnramified}
\frac{dz}{z}+\frac{dx}{x^{k+1}}+\lambda\frac{dx}{x}.
\end{equation}
can be obtained by another formal bundle transformation.
As in the logarithmic case, the residue $\lambda$ is not unique:
bundle transformations $z\mapsto x^nz^{\pm1}$ 
change $\lambda$ into $\pm\lambda+n$.

In the {\bf ramified case} (called nilpotent in \cite{Brunella}) $\kappa=k-\frac{1}{2}$ we get, 
after bimeromorphic bundle transformation, the following normal form 
\begin{itemize}
\item {\bf Irreg${}^{\text{ram}}$}: $\Omega=dz+\frac{z^2-2x^kz-x}{4x^{k+1}}dx+(a(x)z^2+b(x)z+c(x))dx$ with $a,b,c$ holomorphic and $k\in\Z_{\ge 1}$.
\end{itemize}
After ramification $x=\tilde x^2$, we get a singularity of irregular (unramified) type {\bf Irreg} with irregularity
index $\tilde k=2\kappa$ (and $\lambda=0$). More generally, all three types {\bf Log${}^{\mathbb G_m}$}, 
{\bf Log${}^{\mathbb G_a}$} and {\bf Irreg} are stable under ramifications $x=\tilde x^n$;
moreover $\lambda$ and the irregularity index $\kappa$ are both multiplied by $n$.

\begin{remark}\label{RemAlgoBrunella}
As explained in \cite[Chapter 4, Section 1]{Brunella}, there is a more geometric algorithmic way 
to minimize the order of pole of a Riccati foliation $(\pi:P\to\Delta,\mathcal H)$ at $0\in\Delta$.
Recall that an elementary transformation of $P$ at a point $p\in P_0:=\pi^{-1}(0)$ consists in blowing-up 
the point $p\in P$ and then contracting the strict transform of the fiber $P_0$. 
Assume $\mathcal H$ has a multiple pole at $0\in\Delta$. 
\begin{itemize}
\item If $\mathcal H$ has $2$ singular points on the fiber $P_0$, then the order of poles is minimal,
{\it i.e.} cannot be decreased by bimeromorphic bundle transformation: we are in the 
irregular unramified case. 
\item If $\mathcal H$ has only $1$ singular point $p$ on the fiber $P_0$, then apply an elementary transformation at $p$; if moreover the order of pole has not decreased, then again it is minimal:
we are in the irregular ramified case. 
\end{itemize}
Finally, when the order of pole is not minimal,
and we want to minimize it, we just have to apply an elementary transformation to the 
unique singular point of $\mathcal H$, and possibly repeat this operation, until we arrive 
at a simple pole or at one of the aforementioned two cases. This algorithmic procedure can easily be 
generalized for Riccati foliations $(P\to S,\mathcal H)$ over a surface $S$ (see \cite{RH}).
\end{remark}

\subsection{Irregular singular points and Stokes matrices}\label{S:stokes}
For details on what follows, see \cite[Chapitre VI]{MartinetRamis1} or \cite[Section 5]{Frankpseudo}.
In the normal form {\bf Irreg}, coefficients $a,b,c$ can be killed by formal (generally divergent) gauge transformation and
we arrive at the formal normal form (\ref{eq:FormalNormFormUnramified}):
$$\frac{\Omega}{z}=\frac{dz}{z}+\frac{dx}{x^{k+1}}+\lambda\frac{dx}{x}.$$
In the setting of $\mathfrak{sl}_2$-connections, this normal form writes
$$d+\begin{pmatrix} -\frac{\alpha }{2} & 0\\ 0 &\frac{\alpha}{2} \end{pmatrix}$$
where $\alpha= \frac{dx}{x^{k+1}}+\lambda\frac{dx}{x}.$

In general this last normalization is not convergent. Nevertheless, there are $2k$ closed
sectors covering a neighborhood of $0$ in the $x$-variable such that the differential equation is holomorphically conjugate
to the normal form over the interior of the sector, and the conjugation extends continuously to the boundary. Each of
the sectors contains exactly one of the arcs $\{ x \in \mathbb D_{\varepsilon}   | x^k \in i \mathbb R \} - \{ 0\}$ where $\mathbb D_{\varepsilon}=\{|x|<\varepsilon\}$.
Over each of these  sectors there are only two solutions with well defined limit when $x$ approaches zero which correspond to the solutions  $\{z=0\}$ and $\{z= \infty\}$ for the normal form.
When we change from a sector intersecting $x^k \in i\mathbb R_{>0}$
to a sector intersecting $x^k \in i\mathbb R_{<0}$ in the counter clockwise direction
then we can continuously extend the solution corresponding to $\{ z= + \infty \}$
but the same does not hold true  for the solution corresponding to $\{ z = 0\}$.
Similarly, when changing from sectors intersecting $x^k \in i\mathbb R_{<0}$ to
 sectors with points in $x^k \in i\mathbb R_{>0}$ in the counter clockwise direction we can extend continuously the solution corresponding to   $\{ z= 0\}$  but not
 the one corresponding to $\{ z = \infty\}$.

\begin{figure}
\begin{center}
\begin{tikzpicture}[scale=0.9,font=\small]
    \begin{scope}
     \draw (150:3) node {$Re(x^3)=0$}    ;
      \draw (0,-2.7) node {Sectors containing $x^3 \in i \mathbb R_{>0}$}    ;
    \foreach \angle in {30,90,150,210,270,330}
      \draw[dashed] (0,0) -- (\angle :2.3);

    \foreach \angle in {30,150,270 }
     \path [draw ,fill= lightgray,semitransparent] (0,0) -- (\angle-40:1.5) arc (\angle-40:\angle + 40 :1.5) -- (0,0);

    \foreach \angle in {30, 150, 270 }
     \path [draw ,thick] (0,0) -- (\angle-40:1.5) arc (\angle-40:\angle + 40 :1.5) -- (0,0);

    \end{scope}

      \begin{scope}[shift={(6,0)}]
           \draw (330:3) node {$Re(x^3)=0$}    ;
           \draw (0,-2.7) node {Sectors containing $x^3 \in i \mathbb R_{<0}$}    ;
        \foreach \angle in {30,90,150,210,270,330}
      \draw[dashed] (0,0) -- (\angle :2.3);

    \foreach \angle in {90,210,330}
     \path [draw ,fill= lightgray,semitransparent] (0,0) -- (\angle-40:1.5) arc (\angle-40:\angle + 40 :1.5) -- (0,0);

    \foreach \angle in {90,210,330}
     \path [draw ,thick] (0,0) -- (\angle-40:1.5) arc (\angle-40:\angle + 40 :1.5) -- (0,0);

    \end{scope}
\end{tikzpicture}
\end{center}
\caption{}
\end{figure}

The obstructions to glue continuously the two distinguished  solutions over adjacent sectors are the only obstructions to
analytically conjugate the differential equation to its normal form.   These obstructions are codified by the  Stokes
matrices (matrix point of view is more convenient here):
$$\begin{pmatrix}1&b_1\\0&1\end{pmatrix}\begin{pmatrix}1&0\\c_1&1\end{pmatrix}\ \cdots\ \begin{pmatrix}1&b_k\\0&1\end{pmatrix}\begin{pmatrix}1&0\\c_k&1\end{pmatrix}$$
(well-defined up to simultaneous conjugacy by a diagonal matrix). 
Precisely, the $b_i$'s (resp. the $c_i$'s) are responsible for the divergence of the central manifold at $z=\infty$ (resp. $z=0$).
In other terms, the $b_i$'s (resp. the $c_i$'s) are the obstructions to kill the coefficient $a(x)$ (resp. $c(x)$). The monodromy around $x=0$ is given by multiplying this sequence of Stokes matrices (in this cyclic order) with the formal monodromy $$\begin{pmatrix}e^{-i\pi\lambda}&0\\0&e^{i\pi\lambda}\end{pmatrix}$$
(on the left or the right, as this does not matter up to diagonal conjugacy).

If  $x=\varphi(\tilde x)$ is a change of coordinate compatible with normal form {\bf Irreg}, then 
the linear part $\varphi'(0)$ is a $k^{\text{th}}$ root of unity; it permutes the sectors, and consequently 
induces a cyclic permutation of (indices of) Stokes matrices.
All these classical results can be found in \cite[Chapitre VI.2]{MartinetRamis1}.
Furthermore, one also finds there explicit examples of Riccati foliations with non trivial Stokes matrices.
The most famous of them
is undoubtly  Euler's equation that can be interpreted as a  Riccati foliation over $\mathbb P^1$ defined by the
rational $1$-form
\[
dz - \frac{(z- x)}{x^2} dx.
\]
The pole over $\{x=0\}$ is   irregular unramified with non-trivial Stokes, since the weak separatrix through $(0,0)$
 is divergent.

\subsection{Local factorization at a generic point of the polar divisor}\label{sec:locfac}
After reviewing the situation in dimension one we now move back to the two dimensional case.
Let $ (\pi: P \to S,\mathcal H)$ be a Riccati foliation
over a surface $S$ and let $H\subset S$ be an irreducible component of the polar divisor $(\mathcal H)_\infty$.
The hypersurface $\pi^{-1}(H)$ can be $\mathcal H$-invariant or not. In fact, the following assertions are equivalent (see \cite[Lemma 4.1]{RH})
\begin{itemize}
\item $H$ is not $\mathcal H$-invariant,
\item the polar order of $d\Omega$ along $\pi^{-1}(H)$ is strictly greater than the polar order of $\Omega$.
\end{itemize}
Moreover, in this case, if $H$ is smooth (e.g. if $(\mathcal H)_\infty$ is simple normal crossing), then we can make a bundle modification over $H$ such that 
$H$ is no more a polar component of $\mathcal H$. 

The singular set of $\mathcal H$ is located over the polar divisor,
and in restriction to $\pi^{-1}(H)$, it consists in a curve $\Gamma$ intersecting the generic fiber $\pi^{-1}(p)$, 
$p\in H$
in $1$ or $2$ points. Vertical irreducible components $\pi^{-1}(p)$ of $\Gamma$ occur over singular points 
of $(\mathcal H)_\infty$ and so-called {\it turning points} for the connection. Let us call them {\it special points}.
The remaining (non vertical) part of $\Gamma$ can consist in one or two sections, or also of an irreducible double section.
When $H$ is $\mathcal H$-invariant, then we have (see \cite[Lemma 4.1]{RH})

\begin{lemma}\label{L:trivializa}
Let  $H$ be an irreducible component of the polar divisor of $(P,\mathcal H)$.
If $\pi^{-1}(H)$ is $\mathcal H$-invariant then the foliation $\mathcal H$
locally factors through a curve along $H$ minus its set of special points.
\end{lemma}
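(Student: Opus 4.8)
The plan is to localize near a generic point of $H$ and then settle a one-dimensional problem with the help of the normalization recalled in Proposition \ref{prop:BrunellaTurritin}. First I would fix a point $p\in H$ that is not special --- so that on a small polydisc $U=\Delta_x\times\Delta_t\subset S$ centred at $p$ the polar divisor $(\mathcal H)_\infty$ is just $H\cap U=\{x=0\}$ and $p$ is not a turning point --- and, after a birational bundle transformation, trivialize $P|_U\cong U\times\P^1$. As the first projection $\mathrm{pr}:U\to\Delta_x$ contracts $H\cap U$ to a point, it is enough to prove that $\mathcal H|_{\pi^{-1}(U)}$ is birationally equivalent to $\mathrm{pr}^*\mathcal H_0$ for some Riccati foliation $\mathcal H_0$ over the disc $\Delta_x$.

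The geometric mechanism is this. For a generic value $c\neq 0$ the transverse slice $\{x=c\}\times\Delta_t$ misses the polar divisor, so $\mathcal H$ restricts to $\pi^{-1}(\{x=c\}\times\Delta_t)\cong\Delta_t\times\P^1$ as a pole-free Riccati foliation over the disc $\Delta_t$; being pole-free over a simply connected base it has trivial monodromy, hence a holomorphic first integral $G_c$ whose level sets are holomorphic sections $z=\varphi(t)$. Normalizing $G_c$ on the fibre over $(c,0)$ makes it depend holomorphically on $c$, so the $G_c$ patch together into a rational map $G:\pi^{-1}(U)\dashrightarrow\P^1$ whose fibres form a one-dimensional subfoliation $\mathcal G\subset\mathcal H$ transverse to the $t$-direction; the leaf space of $\mathcal G$ is a $\P^1$-bundle over $\Delta_x$ with coordinate $(x,G)$, and $\mathcal H$ descends to a Riccati foliation $\mathcal H_0$ on it with $\mathcal H|_{\pi^{-1}(U)}$ birationally equal to $\mathrm{pr}^*\mathcal H_0$. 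When $\mathcal H$ is regular along $H$ this can be seen more cheaply: the local monodromy around $H$ near $p$ is generated by one small loop, hence trivially factors through $\mathrm{pr}_*$, and Deligne's Riemann--Hilbert correspondence \cite{Deligne} (compare Proposition \ref{P:pullRiccati}) produces the factorization at once.

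The step I expect to be the main obstacle is the passage across $\{x=0\}$: one must check that the transverse first integrals $G_c$ --- equivalently the subfoliation $\mathcal G$, a priori defined only over $U\setminus H$ --- extend meromorphically across $\pi^{-1}(H)$ rather than degenerating wildly as $x\to 0$. The foliation $\mathcal G$ extends automatically to a saturated subfoliation of $\mathcal H$, so the real point is that the leaf space of the extension is again a $\P^1$-bundle over $\Delta_x$, i.e. its generic leaf is still a $t$-graph. I would verify this from the explicit normal forms of Proposition \ref{prop:BrunellaTurritin} and Remark \ref{rem:IrregRam} (up to the harmless double cover $x\mapsto x^2$), which are valid along $H$ near $p$ --- because $p$ is not a turning point --- with locally constant invariants $k$ and $\lambda\bmod\Z$: in each of them $\mathcal H$ is visibly the pullback of a one-dimensional Riccati foliation, and the integrability of $\mathcal H$, which already forces a putative family of Euler-type equations over $U$ to be independent of $t$, ensures this pulled-back structure persists as $p$ moves along $H$. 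Granting this, $\mathcal H|_{\pi^{-1}(U)}$ descends as required; this is \cite[Lemma 4.1]{RH}, and its global refinement is the reduction of singularities stated in Theorem \ref{THM:ReductionRiccati}.
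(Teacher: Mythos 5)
Your strategy is genuinely different from the paper's: you integrate the flat connection along the slices $\{x=c\}\times\Delta_t$ to build a fiberwise M\"obius map $G$ that straightens $\mathcal H$ into a pullback from $\Delta_x$. Over $U\setminus H$ this is sound --- once the $dt$-components of the transformed triple vanish, the integrability relations (\ref{IntCondOmega}) do force the remaining coefficients to depend on $x$ alone. But the gap you flag yourself is genuine and is not closed by what you propose. The normalized fundamental solution $M(x,t)$ (equivalently $G$, equivalently the trivialization of the leaf space of $\mathcal G$) is a priori only holomorphic on $(\Delta_x\setminus\{0\})\times\Delta_t$, and nothing you say excludes an essential singularity along $\{x=0\}$. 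Invoking Proposition \ref{prop:BrunellaTurritin} and Remark \ref{rem:IrregRam} is circular: those normal forms only describe the restriction of $\mathcal H$ to a one-dimensional transversal $\Delta_x\times\{t\}$ for each fixed $t$; the assertion that these normalizations can be chosen holomorphically in $t$ and compatibly with $\mathcal H$ in the $t$-direction --- your ``the family is independent of $t$'' --- \emph{is} Lemma \ref{L:trivializa}, not a consequence of the constancy of $k$ and $\lambda\bmod\Z$. In the irregular case the difficulty is even worse, since the normalizing maps of Proposition \ref{prop:BrunellaTurritin} are only sectorial (Stokes phenomenon), so constancy of the formal type does not single out an isomorphism to vary in $t$. (Your cheaper argument for the regular case via Deligne and Proposition \ref{P:pullRiccati} is correct, but the lemma is needed above all at irregular components, where it does not apply.)

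The paper closes this gap by working transversally to $\mathcal H$ rather than to $H$. Since $\pi^{-1}(H)$ is $\mathcal H$-invariant, at a generic point of $\pi^{-1}(H)$ the foliation $\mathcal H$ is smooth and contains the fiber direction, so the image of a local section $\sigma$ of $\pi$ through such a point is automatically transverse to $\mathcal H$, and $\mathcal F=\sigma^*\mathcal H$ is a \emph{regular} foliation germ on $(S,p_0)$; it therefore admits a holomorphic submersive first integral $f$ having $H$ as a level set. The gauge freedom (\ref{ChangeTriple}) combined with (\ref{IntCondOmega}) --- this is Lemma \ref{LemFClosedHClosed} and the local version of Corollary \ref{CorFclosedHclosed} --- then normalizes the projective triple to $(df,0,\phi\,df)$ with $d\phi\wedge df=0$, so that $\Omega=dz+(1+\phi(f)z^2)\,df$ is manifestly a pullback through $f$. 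All the divergence you worry about is absorbed into the single meromorphic coefficient $\phi$, whose dependence on $f$ alone comes for free. If you wish to keep your construction, you must supply an independent proof that $M$ extends meromorphically across $\{x=0\}$; in the irregular case that amounts to the isoformal/isoStokes triviality of the family in the $t$-direction and is not easier than the lemma itself.
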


For the sake of completeness, we will give a short proof in Section \ref{SecBasicLemma}.
By local factorization we mean the following. At any non special point $p\in H$,
and for a sufficiently small disk $\Delta\subset S$ tranverse to $H$ at $p$,
there exists a neighborhood $U\subset S$ of $p$ and 
a submersion $f:U\to\Delta$ with $f^{-1}(p)=H\cap U$
such that the Riccati $(P,\mathcal H)\vert_U$ over the neighborhood $U$ is
 the pull-back of its restriction over $\Delta$  through a  fibre bundle isomorphism.
In other words, the Riccati foliation is locally a product
of a Riccati  foliation over a disk  by a disk (or a polydisk in higher dimension).
In this situation, the isomorphism class of the Riccati foliation $(P,\mathcal H)\vert_\Delta$ is called
{\it the transverse type} of $(P,\mathcal H)$ along the component $H$.

\begin{remark}For a linear $\mathfrak{sl}_2$-connection $(E,\nabla)$, the corresponding Riccati foliation
obtained by projectivization satisfies the assumption of Lemma \ref{L:trivializa} at a non special point $p\in H$
of the polar divisor if and only if the polar divisors of the matrix connection $A$ of $\nabla$ and its differential $dA$ satisfies in any local trivialization of $E$,
 $(dA)_\infty\le (A)_\infty$. In the case
of simple poles, this just means that the pole is actually logarithmic. 
\end{remark}

The following result follows from the existence of a canonical lattice (see \cite{Malgrange})
and is also proved in \cite{RH} for the Riccati setting. 

\begin{prop}Let $ (\pi: P \to S,\mathcal H)$ be a Riccati foliation
over a projective surface $S$. Up to birational bundle modification, we can assume that 
all irreducible components $H$ of $(\mathcal H)_\infty$ satisfy assumption of Lemma \ref{L:trivializa}
and the Riccati foliation locally factors through a curve at the neighborhood of any non special point $p\in S$.
\end{prop}

We now explain how special points can be simplified by blowing-up and finite flat morphism.

\subsection{Reduction of singularities for Riccati foliations over surfaces}\label{S:goodformal}

The following result will be proved in  Appendix \ref{S:normalform}.

\begin{thm}\label{THM:ReductionRiccati}
Let $ (\pi: P \to S,\mathcal H)$ be a Riccati foliation over a projective surface $S$.
There exists a generically finite morphism $\phi:\tilde S\to S$ (with $\tilde S$ smooth) 
and a birational bundle modification of the pull-back bundle $\phi^*P$ such that
the pull-back Riccati foliation has normal crossing polar divisor $D$, and is locally defined
over any point $p\in \vert D\vert$ by a Riccati $1$-form having one of the following types:
\begin{enumerate}
\item\label{ModelLogGm} {\bf Log${}^{\mathbb G_m}$}: $\frac{\Omega}{z}=\frac{dz}{z}+\lambda_x\frac{dx}{x}$, or $\frac{\Omega}{z}=\frac{dz}{z}+\lambda_x\frac{dx}{x}+\lambda_y\frac{dy}{y}$;
\item\label{ModelLogGa} {\bf Log${}^{\mathbb G_a}$}: $\Omega=dz+\frac{dx}{x}$, or 
$\Omega=dz+\frac{dx}{x}+\lambda\frac{dy}{y}$;
\item\label{ModelIrreg0} {\bf Irreg${}^{0}$}: $\frac{\Omega}{z}=\frac{dz}{z}+\frac{df}{f^{k+1}}+\lambda_x\frac{dx}{x}+\lambda_y\frac{dy}{y}$,
where $f=x$ or $xy$;
\item\label{ModelIrreg} {\bf Irreg}:  $\Omega=dz+\left(\frac{df}{f^{k+1}}+\lambda\frac{df}{f}\right)z+\left(a(f)z^2+b(f)z+c(f)\right) df$, where $f=x$ or $xy$.
\end{enumerate}
In all cases, we have $\lambda\in\C^*$, $\lambda_x,\lambda_y\in\C\setminus\mathbb Z$, $a,b,c$ holomorphic and $k\in\Z_{\ge 1}$.
\end{thm}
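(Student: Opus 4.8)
The strategy is to run a classical resolution-of-singularities loop for Riccati foliations on a surface, combining blowing-ups with ramified base changes, and then invoke the one-dimensional Levelt--Turrittin normalization (Proposition~\ref{prop:BrunellaTurritin}) transversally to the polar divisor. First I would apply the corollary following Lemma~\ref{L:trivializa}: after a birational bundle modification we may assume every irreducible component $H$ of $(\mathcal H)_\infty$ is $\mathcal H$-invariant (non-invariant smooth components can be removed by a bundle modification over $H$) and that the Riccati foliation locally factors through a curve at every non-special point of $S$. At such a non-special point, the transverse type is one-dimensional, and Proposition~\ref{prop:BrunellaTurritin} (applied after a two-fold cover $x\mapsto x^2$ to absorb the ramified irregular case of Remark~\ref{rem:IrregRam}) puts the germ into one of the shapes {\bf Log${}^{\mathbb G_m}$}, {\bf Log${}^{\mathbb G_a}$} or {\bf Irreg}; the only work left is along the finite set of special points (crossings of $|(\mathcal H)_\infty|$, turning points, and points where the horizontal part $\Gamma$ of $\sing(\mathcal H)$ is not already two disjoint sections).

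Next, at each special point $p$ I would choose local coordinates $(x,y)$ so that the polar divisor is $\{x=0\}$ or $\{xy=0\}$, and analyze the restriction of $\mathcal H$ to a generic pencil of transversals: the irregularity index $k$ and the formal invariants $(\lambda_x,\lambda_y)$, $b$, $c$ are read off from this one-dimensional family, and Sabbah-type ``good formal model'' theory (this is exactly what the Appendix provides, and what Theorem~\ref{THM:ReductionRiccati} cites) guarantees that after a sequence of point blowing-ups the exponential factor becomes \emph{monomial}, i.e. of the form $df/f^{k+1}$ with $f=x$ or $f=xy$, rather than a general non-normal-crossing divisor. During this process one also has to control the ``residual'' logarithmic part so that the residues $\lambda_x,\lambda_y$ end up not in $\mathbb Z$ (if an integer residue appears one normalizes it to $0$ by a bundle modification, possibly merging it into the exponential factor or the trivial case). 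The ramified covering $\phi:\tilde S\to S$ is assembled from the local two-fold covers needed: Remark~\ref{rem:IrregRam} shows a single $x\mapsto x^2$ cover replaces {\bf Irreg${}^{\text{ram}}$} by {\bf Irreg}, and a similar cover handles the dihedral/double-section behaviour of $\Gamma$ coming from Lemma~\ref{lem:MultisectionH}; a Kawamata-type covering (as in the proof of Theorem~\ref{T:criteriofactor}) globalizes these local covers to a single generically finite $\phi$ with $\tilde S$ smooth, and one then resolves $D=\phi^{-1}(|(\mathcal H)_\infty|)$ to normal crossings by further blowing-ups. Finally one records that the list of models {\bf Log${}^{\mathbb G_m}$}, {\bf Log${}^{\mathbb G_a}$}, {\bf Irreg${}^0$}, {\bf Irreg} is stable under the admissible operations (blow-up of a crossing of two model divisors, and pull-back by the cover), so the process terminates.

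The main obstacle I anticipate is the monomialization step: ensuring that after finitely many blowing-ups the exponential/irregular part of every germ along $D$ is supported on a normal-crossing divisor with the prescribed monomial shape $df/f^{k+1}$, uniformly along each component, and simultaneously that the blow-ups do not create new special points outside the allowed list. This is precisely the ``good formal model'' phenomenon of Sabbah for meromorphic connections, transported to the $\PSL$/Riccati setting; its proof is the content of Appendix~\ref{S:normalform}, and it is where the interplay between the formal classification (Section~\ref{S:stokes}), the geometry of $\Gamma\subset P$, and the combinatorics of iterated blow-ups all come together. A secondary subtlety is bookkeeping the rank-one twist: passing between $\mathfrak{sl}_2$-connections and Riccati foliations one is free to tensor by a rank-one meromorphic connection, and this freedom must be used consistently to kill unwanted diagonal/logarithmic terms while not spoiling the trace normalization; I would handle this by working throughout with the projective ($\PSL$) object and only at the end choosing the twist realizing each local model on the trivial bundle.
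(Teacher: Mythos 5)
Your outline gets the bookkeeping right (minimizing polar orders via \cite{RH} so that Lemma~\ref{L:trivializa} applies, invoking Proposition~\ref{prop:BrunellaTurritin} transversally at non-special points, and using a Kawamata covering at the end to kill the ramified models), but the heart of the theorem --- the reduction of the finitely many \emph{special} points to the listed local models --- is not actually proved. You defer it to ``Sabbah-type good formal model theory\dots this is exactly what the Appendix provides, and what Theorem~\ref{THM:ReductionRiccati} cites.'' That is circular: the Appendix \emph{is} the proof of Theorem~\ref{THM:ReductionRiccati}, so you cannot invoke it as an input. If instead you mean to cite Sabbah's or Andr\'e's good formal models for linear meromorphic connections, you would still need to (a) transport the statement from linear connections to the $\PSL$/Riccati setting (the rank-one twist you mention is genuinely non-trivial and you leave it unresolved), (b) check that the resulting formal models match the stated list including the residue normalizations $\lambda_x,\lambda_y\notin\mathbb Z$, and (c) handle the purely logarithmic special points (crossings and resonances), which are not covered by the irregular monomialization. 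None of this is carried out.

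The paper's actual mechanism is different and is entirely absent from your proposal: since $P$ is birationally trivial one chooses a rational section $\sigma:S\dashrightarrow P$ \emph{not} invariant by $\mathcal H$ and forms the auxiliary transversely projective foliation $\mathcal F=\sigma^*\mathcal H$ on $S$. Seidenberg's theorem reduces the singularities of $\mathcal F$ by point blow-ups, and the Berthier--Touzet classification of reduced singular points of transversely projective foliations (\cite{BerthierTouzet,FredTransvProj}) then lists the possible local germs of $\mathcal F$ (holomorphic first integral, closed $1$-form, or pull-back of a one-dimensional Riccati singularity by $f=x^py^q$). The local version of Corollary~\ref{CorFclosedHclosed} converts each of these into a local factorization of $\mathcal H$ through a curve, whence the normal forms --- first with $f=x^py^q$ and with the ramified model {\bf Irreg}${}^{\text{ram}}$ still present, and only then does the Kawamata covering (chosen so that $m_ik_i$ is a fixed common multiple $k$ along the whole irregular divisor) remove the ramification and equalize the irregularity indices. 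This auxiliary-foliation-plus-Seidenberg step is the missing idea; without it, or a fully worked-out substitute via Sabbah's theory, your argument has a gap exactly where the difficulty lies.
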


The first part of the proof has to be compared with Sabbah's result in \cite{Sabbah}. 
He defines, for linear meromorphic connections, 
a notion of good formal model at a point of a normal crossing divisor; this  allows him to define Stokes matrices.
Bad formal models occur in codimension $2$; they correspond to special points of Section \ref{sec:locfac}.
When the base manifold $X$ is a surface (and up to rank $5$ connection), Sabbah proved that, 
maybe blowing-up $X$, we can assume that all points of the polar divisor have good formal model; 
this was generalized by Kedlaya and Mochizuki for any rank \cite{Kedlaya,Mochizuki}. 

In Appendix \ref{S:normalform}, for the surface and rank $2$ case, we provide a proof using an auxiliary (transversely projective) foliation, 
Seidenberg's resolution of singular points and the classification of reduced singular points of transversely projective
foliations by Berthier and the third author in \cite{BerthierTouzet, FredTransvProj}. We use ramified coverings to get rid of ramified irregular singular points.

In the sequel, a Riccati foliation $(\pi: P \to S,\mathcal H)$ will be said in {\bf reduced} 
if it satisfies the conclusion of Theorem \ref{THM:ReductionRiccati}.

\subsection{Closed $1$-form and Stokes matrices}\label{sec:ClosedFormStokes}
Let $(P,\mathcal H)$ be a reduced Riccati foliation on a complex surface $S$.
Let $p\in S$ be a point of the support of the divisor $(\mathcal H)_\infty$.
If the local model for $\mathcal H$ at $p$ is of type (\ref{ModelLogGm}),
(\ref{ModelLogGa}) or (\ref{ModelIrreg0}) in Theorem \ref{THM:ReductionRiccati},
we see that $\mathcal H$ is locally defined by a meromorphic closed $1$-form. 
In the last item (\ref{ModelIrreg}) of Theorem \ref{THM:ReductionRiccati},
the coefficients $a(f),b(f),c(f)$ can be killed by a formal bundle transformation $\hat z=\hat\phi(z)$,
i.e. with $\hat\phi\in\mathrm{PGL}_2(\C[[f]])$. By this way, we arrive at the normal form
(see \S \ref{SecReview1D} or \cite[Section IV.1.2]{MartinetRamis1})
$$\hat{\Omega}=\frac{d\hat z}{\hat z} + \left(\frac{\lambda}{f} + \frac{1}{f^{k+1}}\right) df$$
which is closed. From this formal model, one easily check that $\hat z=0$ and $\hat z=\infty$
are the only formal sections on the bundle that are $\mathcal H$-invariant (i.e. in restriction to which
$\hat{\Omega}$ identically vanish). Therefore, $\Omega$ has  exactly two $\mathcal H$-invariant formal sections of $P$ at $p$. 
The obstruction to the convergence of these two formal sections is given by
non diagonal coefficients of Stokes matrices (see \cite[Chap.III, Th. 4.4]{MartinetRamis1}).

\begin{lemma}\label{lem:ClosedFormStokes}
At a sufficiently small neighborhood $V_p$ of $p$, the following assertions
are equivalent:
\begin{itemize}
\item $\mathcal H\vert_{V_p}$ is defined by a meromorphic closed $1$-form on the restriction $P\vert_{V_p}$,
\item there are two analytic sections $V_p\to P$ of the bundle that are $\mathcal H$-invariant,
\item Stokes matrices are all trivial at $p$ if the type is (\ref{ModelIrreg}).
\end{itemize}
In this case, the two analytic $\mathcal H$-invariant sections induce the formal sections at $p$.
\end{lemma}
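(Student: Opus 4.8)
The plan is to prove the three stated equivalences in a cycle, say first $\Rightarrow$ second $\Rightarrow$ third $\Rightarrow$ first, exploiting the formal normal form $\hat\Omega = \frac{d\hat z}{\hat z} + \left(\frac{\lambda}{f} + \frac{1}{f^{k+1}}\right)df$ and the Stokes data machinery recalled in Section \ref{S:stokes}. The key observation already isolated in the text is that $\hat z = 0$ and $\hat z = \infty$ are the only $\mathcal H$-invariant formal sections, and that their convergence is governed precisely by the off-diagonal coefficients $b_i, c_i$ of the Stokes matrices. So the content of the lemma is really an unwinding of the meaning of ``trivial Stokes''.

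First I would show that a meromorphic closed $1$-form defining $\mathcal H$ on $P|_{V_p}$ forces two analytic invariant sections. Apply Proposition \ref{prop:RiccClosedForm} to $\mathcal H|_{V_p}$: either $\mathcal H$ has a meromorphic first integral, or after a bimeromorphic bundle trivialization the closed form is $c(\frac{dz}{z}+\omega)$ or $dz + \omega$ with $\omega$ closed on $V_p$. The first integral case is incompatible with the irregular local model (an irregular point has divergent weak separatrix, hence no convergent first integral — this needs a short argument, perhaps by reducing to the $n=1$ situation of Proposition \ref{prop:BrunellaTurritin} on a generic transverse disk, where $\lambda$ would have to be rational and Stokes trivial, but then the reduced model Irreg has $\lambda \in \C^*$ and $k \ge 1$, so its restriction to a transversal is genuinely irregular with no meromorphic first integral). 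The $dz+\omega$ case is a $\mathbb G_a$-type (logarithmic) model, again excluded. So we land on $c(\frac{dz}{z}+\omega)$, and then $\{z=0\}$ and $\{z=\infty\}$ are honest analytic $\mathcal H$-invariant sections of $P|_{V_p}$.

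Next, second $\Rightarrow$ third: if there are two analytic invariant sections, then by the uniqueness of $\mathcal H$-invariant formal sections they must coincide with the formal sections $\hat z = 0, \infty$; hence these formal sections converge. By the Stokes theory recalled in Section \ref{S:stokes}, convergence of the central manifold at $z=\infty$ (resp. $z=0$) is exactly the vanishing of all $b_i$ (resp. all $c_i$). So all Stokes matrices reduce to diagonal (unipotent-trivial) form, i.e. are trivial. Finally, third $\Rightarrow$ first: if all Stokes matrices are trivial, the differential equation is analytically — not merely formally — conjugate to its normal form $\hat\Omega$ over a full neighborhood of $p$ (this is precisely the statement that the Stokes matrices are the complete obstruction to analytic normalization, cited from \cite{MartinetRamis1}). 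Pulling the closed form $\frac{d\hat z}{\hat z} + (\frac{\lambda}{f}+\frac{1}{f^{k+1}})df$ back by this analytic gauge transformation produces a meromorphic closed $1$-form on $P|_{V_p}$ defining $\mathcal H$. The closing remark that the two analytic invariant sections induce the formal ones follows from the same uniqueness of formal invariant sections used above.

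\textbf{Main obstacle.} The delicate point is the exclusion of the meromorphic-first-integral alternative in Proposition \ref{prop:RiccClosedForm} and, more generally, making sure the bimeromorphic bundle trivialization there does not destroy the analytic (as opposed to merely formal) control we need near $p$: one must argue that the closed-form structure can be realized after an \emph{analytic} — not just bimeromorphic — change of chart, which is where the equivalence with triviality of Stokes is genuinely used rather than assumed. I would handle this by running the argument in the Stokes direction as the backbone: establish third $\Leftrightarrow$ (the formal sections converge) $\Leftrightarrow$ second directly from Martinet--Ramis, and only then derive first from third via analytic normalization, using Proposition \ref{prop:RiccClosedForm} merely as a cross-check in the first $\Rightarrow$ second step rather than as the engine of the proof.
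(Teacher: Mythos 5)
Your proposal is correct and follows essentially the same route as the paper: Proposition \ref{prop:RiccClosedForm} (with the first-integral and $dz+\omega$ alternatives excluded by irregularity) gives the two analytic invariant sections, the uniqueness of the formal invariant sections $\hat z=0,\infty$ identifies them with the formal ones, and the Martinet--Ramis description of Stokes data as the obstruction to convergence/analytic normalization yields the equivalence with triviality of the Stokes matrices. The only cosmetic difference is that the paper closes the loop from two analytic sections back to a closed $1$-form directly via Lemma \ref{lem:MultisectionH} (sending the sections to $z=0,\infty$ gives $dz+\omega z$ with $\omega$ closed), whereas you route this implication through the analytic normalization to the formal model; both work.
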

\begin{proof}If $\mathcal H\vert_{V_p}$ is defined by a meromorphic closed $1$-form $\Omega$,
then it follows from Proposition \ref{prop:RiccClosedForm} that $\Omega=c\cdot \left(\frac{dz}{z}+\omega\right)$
after convenient bundle trivialization,
with $\omega$ a meromorphic closed $1$-form on $X$ and $c\in\C^*$.
Indeed, $\mathcal H$ cannot have a meromorphic first integral over $p$ since it is irregular,
and it cannot be defined by $dz+\omega$ since it has two $\mathcal H$-invariant formal sections.
Therefore, $z=0$ and $z=\infty$ are two $\mathcal H$-invariant sections which must coincide
with the two formal ones at $p$. Conversely, if $\mathcal H$ has two analytic sections,
then it is defined by a closed $1$-form (see Lemma \ref{lem:MultisectionH} and its proof).
Finally, that the two last assertions are equivalent is well-known (see \cite[Chap.III, Th. 4.4]{MartinetRamis1}).
\end{proof}

\begin{remark} At the neighborhood of an irregular singular point $p$,
a Riccati foliation $\mathcal H$ has no other local (formal) multi-section than the two formal sections discussed above.
It has no meromorphic (formal) first integral and the unique formal $1$-form defining $\mathcal H$
is, up to a scalar constant, the $1$-form $\hat{\Omega}$ above.
\end{remark}

\section{Irregular divisor}\label{S:irregular}

Throughout this section, $(P,\mathcal H)$ is a {\bf reduced} Riccati foliation over a projective surface $S$,
i.e. $\mathcal H$ is as in  the  conclusion of Theorem \ref{THM:ReductionRiccati}. 
We study the irregular part of the polar divisor of $\mathcal H$. Precisely, since $\mathcal H$ is reduced, 
we have $(\mathcal H)_\infty=\sum_i (1+k_i)D_i$ where $k_i$ is the irregularity index of $\mathcal H$ along $D_i$;
in particular, $k_i=0$ precisely if $\mathcal H$ is logarithmic at the generic point of $D_i$. Then we can decompose
$$(\mathcal H)_\infty=(\mathcal H)_{\infty,red}+I$$
where $(\mathcal H)_{\infty,red}$ is a reduced divisor with support $\vert (\mathcal H)_\infty \vert$, 
and $I:=\sum_i k_iD_i$ denotes the {\bf irregular divisor}.

We note that the irregularity index $k_i$ must be constant along each connected component of $I$, due to the local 
model of $\mathcal H$ at intersection points $p\in D_i\cap D_j$ (see (\ref{ModelIrreg0}) and (\ref{ModelIrreg}) of Theorem \ref{THM:ReductionRiccati}).

We also note that the singular set $\sing(\mathcal H)$ over the irregular divisor $I$
consists of finitely many $\P^1$-fibers, located over intersection points of the polar divisor,
and a smooth curve $Z\subset\pi^{-1}(I)$ on which $\pi$ induces a $2$-fold \'etale covering
$\pi\vert_Z:Z\to I$. If we restrict this picture to a connected component $D$ of $I$,
the curve $Z\vert_D$ may split as a union of two sections, say $Z_0$ and $Z_\infty$, 
or be irreducible (i.e. unsplit). Throughout the section, we will refer to this dichotomy
as the {\bf split} or {\bf unsplit case}. Note that, if we are working on a small analytic neighborhood $V$ of $D$, with $V$ and $D$ having the same homotopy type, then
maybe passing to a $2$-fold \'etale covering $\tilde V\to V$, we can always assume that we are in the split case. Indeed, the monodromy of $\pi\vert_{Z}$ induces a representation 
$\pi_1(V)\simeq\pi_1(|D|) \to \mathbb Z/ 2 \mathbb Z$ which defines such a covering.

\subsection{Flat coordinates}\label{Sec:Flat} Here, we show that any connected component 
$D$ of the irregular divisor $I$ has torsion normal bundle, in particular $D\cdot D=0$.
Indeed something even stronger holds true as proved in the proposition below. 

\begin{prop}\label{P:flat}
Let $(P,\mathcal H)$ be a reduced Riccati foliation on a projective surface $S$.
Let $D= k \cdot D_{red}$ be a connected component of the irregular divisor $I$.  
Then the normal bundle of $D_{red}$ is torsion of order $r$ dividing $2k$
(dividing $k$ in the split case) and moreover
$$\mathcal O_S(r D_{red})\vert_{k D_{red}}\simeq \mathcal O_{k D_{red}}.$$
In particular, in any case, we have $D\cdot D=0$.
\end{prop}

\begin{proof}
Take an open covering of a neighborhood of $\vert D\vert$ by sufficiently small open sets $V_i$ on which $\mathcal H$ 
is defined by models of Theorem \ref{THM:ReductionRiccati}, for convenient analytic coordinates $x_i,y_i$;
the restriction $|D|\cap V_i$ is defined by $\{ f_i=0\}$ with $f_i=x_i$ or $f_i=x_i y_i$. 
We can assume that intersections $V_i\cap V_j$ are simply connected and do not contain singular points of the polar divisor
$(\mathcal H)_\infty$. We want to prove that, on $V_i\cap V_j$, we have
$$f_i=a_{ij}f_j+b_{ij}f_j^{k+1}$$
with constant $a_{ij}^{2k}=1$ and $b_{ij}$ holomorphic; moreover, in the split case, we want to show that we can choose 
$f_i$'s such that $a_{ij}^{k}=1$. This will prove the Proposition.

We are going to work at the formal completion of $V$ along $D$. Over each intersection $V_i\cap V_j$, 
we can choose formal trivializing coordinates for the bundle $z_i,z_j:P\vert_{V_i\cap V_j}\to\mathbb P^1$ such that the foliation 
$\mathcal H$ is defined by the closed formal  $1$-forms
\[
\Omega_i = \frac{dz_i}{z_i} + \left( \lambda\frac{df_i}{f_i} + \frac{df_i}{f_i^{k+1}} \right)\ \ \ \text{and}\ \ \ 
\Omega_j = \frac{dz_j}{z_j} + \left( \lambda\frac{df_j}{f_j} + \frac{df_j}{f_j^{k+1}} \right).
\]
Indeed, if $\mathcal H$ has model (\ref{ModelIrreg}) over $V_i$, then this follows from 
\S \ref{SecReview1D} (see (\ref{eq:FormalNormFormUnramified})); on the other hand, for the model (\ref{ModelIrreg0}), with $f_i=x_i$ or $x_iy_i$, then we have 
\[
\frac{dz}{z}+\frac{df_i}{f_i^{k+1}}+\lambda_x\frac{dx_i}{x_i}+\lambda_y\frac{dy_i}{y_i}=
\left(\frac{dz}{z}+\tilde\lambda_y\frac{dy_i}{y_i}\right)+\frac{df_i}{f_i^{k+1}}+\lambda_x\frac{df_i}{f_i}
\]
and we get the normal form $\Omega_i$ by setting $z_i=z\cdot\exp(\tilde\lambda_y\log(y_i))$ 
for a convenient determination of the logarithm (note that $y_i\not=0$ on $V_i\cap V_j$). 
We do exactly the same over $V_j$.

We claim that we can choose $\Omega_i=\pm\Omega_j$ over $V_i \cap V_j$. Indeed,
since $\Omega_i$ and $\Omega_j$ define the same foliation, they must be proportional: we have
$\Omega_i=g_{ij}\Omega_j$ for some (formal) function $g_{ij}$. 
From closedness condition for these $1$-forms, 
we deduce the $g_{ij}$ must be a first integral for $\mathcal H$;
therefore, $g_{ij}$ is constant. But since residues of $\Omega_i$ and $\Omega_j$ 
on non vertical poles $z_i=0,\infty$ are equal to $\pm1$, we thus get $g_{ij}=\pm1$.

We note that the non vertical poles of $\Omega_i$ define two formal sections over $V_i\cap V_j$ 
that coincide over $\vert D\vert\cap V_i\cap V_j$ with the singular locus of $\mathcal H$.
In the split case, the non vertical poles therefore globally define two sections so that we can
globally fix the residues of all $\Omega_i$'s on these. Consequently, in the split case,
we can moreover assume $\Omega_i=\Omega_j$ over $V_i \cap V_j$.

If $\lambda\neq0$, then $\Omega_i=\Omega_j$ because of the sign of the residue over $D$.
Therefore, in the unsplit case, we necessarily have $\lambda=0$.

Let us first deal with the split case, therefore assuming $\Omega_i=\Omega_j$.
We must have  $f_i=f_{ij}(x_j,y_j)f_j$ and $z_i=g_{ij}(x_j,y_j) z_j$ for some invertible functions
$f_{ij}$ and $g_{ij}$ on $V_i\cap V_j$.  Then we get
\[
 {0=\Omega_i -  \Omega_j  =} \frac{dg_{ij}}{g_{ij}} + \lambda \frac{d f_{ij}}{f_{ij}} + \frac{1}{f_j^k} \frac{df_{ij}}{f_{ij}^{k+1}} + \left( \frac{1}{f_{ij}^k} - 1 \right) \frac{df_j}{f_j^{k+1}}
\]
We deduce that the $1$-form 
\[
\frac{1}{f_j^k} \frac{df_{ij}}{f_{ij}^{k+1}} + \left( \frac{1}{f_{ij}^k} - 1 \right) \frac{df_j}{f_j^{k+1}}
= \frac{1}{f_{ij}^{k+1}} \underbrace{\left( \frac{df_{ij}}{f_j^k} + \left( f_{ij} - {f_{ij}^{k+1}}  \right) \frac{df_j}{f_j^{k+1}} \right)}_{\Theta} \, .
\]
cannot have poles.

Notice that the residue of $f_j^k \Theta$ along $\{f_j=0\}$ is nothing but
$
f_{ij} - f_{ij}^{k+1} \mod f_j .
$
Therefore $f_{ij}^k = 1 \mod f_j$. 
Now, let us write $f_{ij}=a_{ij}(1+b_{ij}f_j^n)$ for some $n>0$ and $b_{ij}$ holomorphic (and $a_{ij}^k=1$).
Then we get
$$\Theta=\frac{(n-k)a_{ij}b_{ij}f_j^ndf_j+o(f_j^n)}{f_j^{k+1}}.$$
If $n<k$, then $b_{ij}$ must vanish along $D$ and we can set $f_{ij}=a_{ij}(1+\tilde b_{ij}f_j^{n+1})$.
By induction, we arrive at $n=k$. This  establishes the proposition in the split case.

Finally, in the unsplit case, whenever we have $\Omega_i=-\Omega_j$, we can replace 
$\Omega_i$ by $\tilde\Omega_i=-\Omega_i$, and 
$(f_i,z_i)$ by $(\tilde f_i,\tilde z_i)=(af_i,1/z_i)$
with $a^{k}=-1$, so that we are back to the previous discussion: we deduce that 
$$f_i=\frac{\tilde a_{ij}f_j+\tilde b_{ij}f_j^{k+1}}{a}=a_{ij}f_j+b_{ij}f_j^{k+1}$$
with $a_{ij}^{2k}=\left(\frac{a_{ij}}{a}\right)^{2k}=1$, ending the proof.
\end{proof}

\begin{remark}\label{rmk:FlatCoord}
In the system of ``transverse coordinates'' $f_i:V_i\to \C$ constructed in the course of the proof,
the Riccati foliation $\mathcal H\vert_{V_i}$ is, except at points of type (\ref{ModelIrreg0}), 
locally defined by
\[
\Omega_i = \frac{dz_i}{z_i} + \left( \alpha_i(f_i) z_i +  \lambda\frac{1}{f_i} + \frac{1}{f_i^{k+1}}  + \beta_i(f_i)+\frac{\gamma_i(f_i)}{z_i} \right)df_i   \,
\]
for holomorphic bundle coordinate $z_i$, where $\alpha_i,\beta_i,\gamma_i$ are holomorphic functions of $f_i$.
We can assume the functions $\alpha_i,\beta_i,\gamma_i$ vanishing at arbitrary high order along  $\vert D\vert$.
Recall from Section \ref{S:stokes} that the obstruction to make them just zero is given by the non triviality of Stokes matrices.
\end{remark}

\begin{remark}\label{rmk:representationIrregular}
On $V_i\cap V_j$ we get
$$
\left\{\begin{matrix}f_i=a_{ij}(f_j+b_{ij} f_j^{k+1})\\
z_i=c_{ij} z_j^{\pm1}\hfill\end{matrix}\right.
$$
with $a_{ij}^k=\pm1$ (same sign) and $b_{ij},c_{ij}\in\mathcal O_{V_i\cap V_j}$
satisfying $c_{ij}\exp(b_{ij}a_{ij}^k)\vert_{\vert D\vert}\equiv 1$.
Considering transition multiplicators $a_{ij}$, we have an induced  representation $\psi: \pi_1(|D|) \to \C^*$ taking values in the group of (respective) roots of the unity, which describe how the differentials $df_i$ change when we follow closed paths  along $D$.
\end{remark}

\subsection{Smooth fibration}
We have just proved that a connected component $D$ of the irregular divisor $I$
has local equations $f_i^{2k}:V_i\to \C$ that patch together up to order $k$, where $k$ is the irregularity index.
In the setting of real $C^\infty$ functions, we can modify  these local equations so that they can patch together to define a fibration
on $V$ having $D$ as a singular fiber and which is smooth elsewhere.

\begin{lemma}\label{lem:CinftyFibration}
Let $(P,\mathcal H)$ be a reduced Riccati foliation on a projective surface $S$.
Let $D$ be a connected component of the irregular divisor $I$ like in Proposition \ref{P:flat}
and $r$ be the torsion order of $D_{red}$.
Then, there exists a $C^\infty$ map $f:V\to\mathbb D$ defined on an analytic neighborhood $V$
of $\vert D\vert$ such that:
\begin{itemize}
\item $f$ induces a $C^\infty$ locally trivial fibration over $\mathbb D^*=\mathbb D\setminus\{0\}$,
\item $f$ coincides with local equations $f_i^{r}:V_i\to\C$ of Remark \ref{rmk:FlatCoord} up to order $k$.
\end{itemize}
\end{lemma}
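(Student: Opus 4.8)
The plan is to produce $f$ by gluing the holomorphic functions $f_i^{\,k}$ of Remark \ref{rmk:FlatCoord} with a $C^\infty$ partition of unity, and then to check that the (non-holomorphic) result still cuts out $|D|$ set-theoretically and is a submersion off $|D|$, so that Ehresmann's theorem yields the locally trivial fibration over $\mathbb D^{*}$.

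First I would fix the data. Since $|D|$ is compact I cover it by finitely many of the charts $(V_i,f_i)$ of Remark \ref{rmk:FlatCoord}, shrink to a locally finite family of open sets $V_i'$ relatively compact in $V_i$ that still cover $|D|$, and set $V:=\bigcup_i V_i'$ and $g_i:=f_i^{\,k}$. I work in the split situation of that remark, so that on $V_i\cap V_j$ one has $f_i=\alpha_{ij}f_j+\beta_{ij}f_j^{\,k+1}$ with $\alpha_{ij}^{\,k}=1$; in the non-split case one first passes to the two-fold cover of a neighbourhood of $|D|$ along which the curve $Z\subset\sing(\mathcal H)$ splits, as in Proposition \ref{P:flat}, and descends the $f$ constructed there. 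Expanding the transition law gives $g_i=g_j\,u_{ij}$ with $u_{ij}\in\mathcal O^{*}(V_i\cap V_j)$ and $u_{ij}\equiv 1$ modulo $f_j^{\,k}$; hence $g_i-g_j$ vanishes along $|D|$ to order at least $2k$, while each $g_i$ vanishes along $|D|$ to order exactly $k$.

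Next I would glue. Choosing a $C^\infty$ partition of unity $\{\rho_i\}$ subordinate to $\{V_i'\}$, I set $f:=\sum_i\rho_i\,g_i$. On $V_i'$ one may write $f=g_i(1+\varepsilon_i)$ with $\varepsilon_i=\sum_j\rho_j\,(g_j-g_i)/g_i$; by the vanishing orders above, $\varepsilon_i$ is $C^\infty$ and vanishes along $|D|$ to order at least $k$, so that $f-g_i=g_i\varepsilon_i$ vanishes to order at least $2k\ge k+1$ --- this is exactly the statement that $f$ agrees with $f_i^{\,k}$ up to order $k$. After shrinking $V$ so that $|\varepsilon_i|<1$ on every $V_i'$, the function $f$ vanishes on $V$ precisely along $|D|$; shrinking once more, $f(V)$ lies in a disc which I rescale to $\mathbb D$.

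Finally I would check submersivity and apply Ehresmann. On $V_i'$ one has $df=(1+\varepsilon_i)\,dg_i+g_i\,d\varepsilon_i$; in the coordinates of Remark \ref{rmk:FlatCoord}, $dg_i=k f_i^{\,k-1}df_i$ is $\C$-linear, of rank two away from $\{f_i=0\}$ and of size comparable to $|f_i|^{\,k-1}$, whereas $g_i\,d\varepsilon_i$ is of size comparable to $|f_i|^{\,k}\cdot|f_i|^{\,k-1}=|f_i|^{\,2k-1}$; hence for $V$ thin enough $df$ has rank two at every point of $V\setminus|D|$ (at the nodes of $|D|$, where $f_i=x_iy_i$, the leading term $g_i=(x_iy_i)^k$ is already a submersion off $\{x_iy_i=0\}$). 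Since $|D|$ is compact and $f^{-1}(0)=|D|$, for $\epsilon$ small $f\colon f^{-1}(\overline{\mathbb D}_\epsilon)\to\overline{\mathbb D}_\epsilon$ is proper, so $f\colon f^{-1}(\mathbb D_\epsilon^{*})\to\mathbb D_\epsilon^{*}$ is a proper submersion and therefore a $C^\infty$ locally trivial fibration by Ehresmann's theorem; rescaling $\mathbb D_\epsilon\cong\mathbb D$ finishes the proof. The one delicate point is the submersivity estimate near $|D|$: it uses that the gluing error is of strictly higher order than $g_i$ itself --- which is precisely what $\alpha_{ij}^{\,k}=1$ (Proposition \ref{P:flat}) provides --- and it has to be checked uniformly along $|D|$, including at its singular points.
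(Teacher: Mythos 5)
Your construction is correct, but it takes a genuinely different route from the paper's. The paper trivializes the multiplicative cocycle $\{f_{ij}^k\}$ \emph{exactly} in the $C^\infty$ category: since $f_{ij}^k$ restricts to $1$ on $|D|$, the class of $\{f_{ij}^k\}$ in $H^1(V,\mathcal A_V^*)\simeq H^2(V,\mathbb Z)\simeq H^2(|D|,\mathbb Z)$ vanishes ($\mathcal A_V$ being a fine sheaf), so one finds smooth units $g_i$ with $f_{ij}^k=g_i/g_j$ and $g_i|_{|D|}=1$, and sets $f=f_i^k/g_i$, which glues on the nose. You instead average the $f_i^k$ with a partition of unity and control the error using the key quantitative input --- namely that $f_i^k-f_j^k$ vanishes to order $2k$ along $|D|$, which is exactly what $\alpha_{ij}^k=1$ and the transition law $f_i=\alpha_{ij}f_j+\beta_{ij}f_j^{k+1}$ of Proposition \ref{P:flat} give. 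Your $\varepsilon_i=\sum_j\rho_j(u_{ji}-1)$ is of the form $f_i^k H_i$ with $H_i$ smooth, so $f=f_i^k\cdot(1+\varepsilon_i)$ is again locally a smooth unit times $f_i^k$; from that point on both proofs reduce to the same local computation of the critical set (a power of a submersion times a unit at smooth points, $h\cdot(x_iy_i)^k$ at the nodes) followed by properness and Ehresmann. What the cohomological route buys is that the unit appears for free and no norm estimates are needed; what your route buys is the avoidance of any sheaf cohomology, at the price of the uniform submersivity estimate near $|D|$ --- in particular at the nodes, where the co-norm of $y\,dx+x\,dy$ is only of order $|xy|^{1/2}$ --- which you correctly identify as the delicate point and which does check out ($g_i\,d\varepsilon_i=O(|f_i|^{2k-1})$ against a main term of co-norm $\gtrsim|f_i|^{k-1/2}$). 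Two minor remarks: your aside about descending $f$ from the two-fold cover in the non-split case is not automatic (the constructed $f$ need not be invariant under the deck involution), but the lemma is stated in the split setting of Remark \ref{rmk:FlatCoord} and the application passes to that cover anyway; and the paper's proof additionally records that the fibration is compatible with the representation $\psi$ of Remark \ref{rmk:representationIrregular}, a property used later which your $f$ also enjoys since it agrees with $f_i^k$ to order $2k$.
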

\begin{proof}
The transition functions $\{f_{ij} \in \mathcal O_V^* (V_i\cap V_j)\}$ of the proof of Proposition \ref{P:flat}
define an element in $H^1(V,\mathcal O_V^*)$ which corresponds to  {$\mathcal O_V(D)$}.
Let  $\mathcal A_V$ and $\mathcal A_V^*$ denote, respectively,
the sheaves of $C^{\infty}$ functions and of  $C^{\infty}$ invertible functions. Notice that  $H^1(V,\mathcal A_V^*)$ is isomorphic to $H^2(V,\mathbb Z)$,
as $\mathcal A_V$ has no cohomology in positive degree ($\mathcal A_V$ is a fine sheaf). Notice also that  the restriction morphism from $H^2(V,\mathbb Z)$
 to $H^2(|D|,\mathbb Z)$ is an isomorphism since the inclusion of $|D|$ in $V$ is a homotopy equivalence.

Since ${f_{ij}^{r}}\vert_{|D| \cap V_i}=1$, the element $\{ f_{ij}^{r} \} \in H^1(V,\mathcal O_V^*)$  maps to the trivial element of $H^1(V,\mathcal A_V^*)  \simeq H^2(V,\mathbb Z) \simeq H^2(|D|,\mathbb Z)$.
Thus  we can find non-vanishing $C^{\infty}$-functions $\{ g_i \in \mathcal A_V^*(V_i)\}$ such that
$
f_{ij}^{r} = \frac{g_i}{g_j} \, .
$
Moreover, we can choose the functions $g_i$ satisfying ${g_i}\vert_{|D| \cap V_i} = 1$.
Our assumptions imply that we can further assume that the function $f_{ij}^{r}$ are constant equal to one when restricted  to $D$.

Therefore we can define a  $C^{\infty}$ function $f: V \to \C$ by the formulas $f\vert_{V_i} = (f_i)^{r}/g_i$.
The function $f$ clearly satisfies $f^{-1}(0)=|D|$ set-theoretically. We claim that, after perhaps shrinking  $V$,
the  critical set of $f$  is contained in $|D|$. At a neighborhood
of a smooth point of $|D|$ the function $f$ is a power of a submersion. At a neighborhood of a singular point of $|D|$, the function $f$ is of the form
$ h(x,\overline x, y, \overline y) x y$ and therefore
$
df =  xy d h + h( y dx +  x dy ) \, .
$
Since this expression  has an isolated singularity at zero (the singular point of $|D|$) it follows  that the critical  set of $f$ is indeed contained in $D$.
Replacing $V$ by $f^{-1}(\mathbb D_{\varepsilon})$ for a sufficiently small $\varepsilon$ we have just proved  the existence of a $C^{\infty}$ proper map $f: V \to \mathbb D_{\varepsilon}$
from $V$ to the  disk of radius $\varepsilon$ which maps $|D|$ to the origin in $\mathbb D_{\varepsilon}$ and, when
restricted to $V-|D|$, becomes a locally trivial fibration over $\mathbb D_{\varepsilon}^*$. 
\end{proof}

\subsection{Closed $1$-form and Riccati foliations}\label{sec:ClosedFormStokesBis}
We present now a semi-local version of Lemma \ref{lem:ClosedFormStokes}

\begin{prop}\label{prop:ClosedFormStokes}
Let $(P,\mathcal H)$ be a reduced Riccati foliation on a projective surface $S$.
Let $D$ be a connected component of the irregular divisor of $(P,\mathcal H)$
and $V$ be a sufficiently small neighborhood of $\vert D\vert$.
Assume we are in the split case. Then the following assertions are equivalent:
\begin{itemize}
\item $\mathcal H\vert_{V}$ is defined by a meromorphic closed $1$-form on the restriction $P\vert_{V}$,
\item there are two analytic sections $V\to P$ of the bundle that are $\mathcal H$-invariant,
\item at some point $p\in\vert D\vert$ (in fact any), the local model for $\mathcal H$ is of type {\bf Irreg${}^{0}$},
or of type {\bf Irreg} with trivial Stokes (see Theorem \ref{THM:ReductionRiccati}).
\end{itemize}
\end{prop}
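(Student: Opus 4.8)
The plan is to reduce to the case where the non-vertical part $Z\subset\sing(\mathcal H)$ lying over $D$ splits as a disjoint union $Z_0\sqcup Z_\infty$ of two sections, and then to upgrade the pointwise statement of Lemma \ref{lem:ClosedFormStokes} to a semi-local one using the flat coordinates of Remark \ref{rmk:FlatCoord}; the propagation along $|D|$ will be carried out by an identity-theorem argument for an auxiliary line bundle. For the reduction: if $Z$ is not already split, $\pi|_Z\colon Z\to|D|$ is a connected \'etale double cover, classified by a non-zero element of $H^1(|D|,\Z/2\Z)=H^1(V,\Z/2\Z)$, and one lets $V'\to V$ be the associated connected \'etale double cover. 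Its pull-back of $(P,\mathcal H)$ is again a reduced Riccati foliation (an \'etale map being a local biholomorphism, the normal forms of Theorem \ref{THM:ReductionRiccati} are preserved), the preimage of $|D|$ is a connected component of the irregular divisor over $V'$, and $Z$ now splits over it. Since the three assertions all concern $P|_{V'}$, I henceforth assume $V'=V$ and $Z=Z_0\sqcup Z_\infty$, and write (i), (ii), (iii) for the three assertions.

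By Proposition \ref{P:flat} and Remark \ref{rmk:FlatCoord} I may, after shrinking $V$, choose an open cover $V=\bigcup_i V_i$, holomorphic transverse coordinates $f_i\colon V_i\to\C$ with $|D|\cap V_i=\{f_i=0\}$, and holomorphic bundle coordinates $z_i$ on $P|_{V_i}$, so that $\mathcal H|_{V_i}$ is defined by
\[
z_i\,\Omega_i=dz_i+\Bigl(b_i(f_i)\,z_i^2+\bigl(\tfrac{\lambda}{f_i}+\tfrac{1}{f_i^{k+1}}\bigr)z_i+c_i(f_i)\Bigr)df_i ,
\]
with $b_i,c_i$ holomorphic, with transition rules $f_i=\alpha_{ij}f_j+\beta_{ij}f_j^{k+1}$ ($\alpha_{ij}^k=1$, $\beta_{ij}\in\mathcal O$) and $z_i=\gamma_{ij}z_j$ ($\gamma_{ij}\in\mathcal O^*$), and with $\Omega_i=\Omega_j$ on overlaps; I also arrange that the overlaps $V_i\cap V_j$ avoid the singular points of $(\mathcal H)_\infty$. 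As $z_i=\gamma_{ij}z_j$, the loci $\{z_i=0\}$ and $\{z_i=\infty\}$ patch to two global holomorphic sections $Z_0,Z_\infty\colon V\to P|_V$; restricting $z_i\Omega_i$ to $\{z_i=0\}$ gives $c_i\,df_i$, and the analogous computation at $\{z_i=\infty\}$ gives $b_i\,df_i$, so $Z_0$ (resp. $Z_\infty$) is $\mathcal H$-invariant if and only if $c_i\equiv0$ (resp. $b_i\equiv0$) on every $V_i$, these functions depending on $f_i$ alone. Finally, Remark \ref{rmk:FlatCoord} and Section \ref{S:stokes} identify triviality of the Stokes matrices of $\mathcal H$ at a point of $|D|\cap V_i$ with $b_i\equiv c_i\equiv0$ on $V_i$; combined with Lemma \ref{lem:ClosedFormStokes} this says that (iii) holds at a given point $p\in|D|$ precisely when $b_i\equiv c_i\equiv0$ on every $V_i$ meeting a neighborhood of $p$.

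Now suppose (iii) holds at some $p_0\in V_{i_0}$, so $b_{i_0}\equiv c_{i_0}\equiv0$. Set $\theta_i:=z_i\Omega_i|_{\{z_i=0\}}=c_i\,df_i$, a holomorphic $1$-form on $V_i$ (holomorphic even across a singular point of $(\mathcal H)_\infty$, where $f_i=x_iy_i$ so $df_i=y_i\,dx_i+x_i\,dy_i$). From $\Omega_i=\Omega_j$ we get $z_i\Omega_i=\gamma_{ij}\,z_j\Omega_j$, hence $\theta_i=\gamma_{ij}\theta_j$ on overlaps; since $\{\gamma_{ij}\}$ is a multiplicative cocycle, the $\theta_i$ patch into a holomorphic section $\theta$ of a line bundle on the connected manifold $Z_0\cong V$. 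By hypothesis $\theta=c_{i_0}\,df_{i_0}=0$ on the nonempty open set $Z_0\cap V_{i_0}$, so $\theta\equiv0$ by the identity theorem, i.e. $c_i\equiv0$ for all $i$; the same argument with $Z_\infty$ gives $b_i\equiv0$ for all $i$. Therefore $Z_0$ and $Z_\infty$ are $\mathcal H$-invariant, which is (ii); and $\Omega_i=\tfrac{dz_i}{z_i}+\lambda\tfrac{df_i}{f_i}+\tfrac{df_i}{f_i^{k+1}}$ is now a closed $1$-form with $\Omega_i=\Omega_j$ on overlaps, so these glue to a meromorphic closed $1$-form on $P|_V$ defining $\mathcal H$, which is (i). Conversely each of (i) and (ii) implies, by restriction and Lemma \ref{lem:ClosedFormStokes}, that (iii) holds at every point of $|D|$. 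Hence (i), (ii) and ``(iii) at some point'' are equivalent, and in particular (iii) holds at one point if and only if it holds at every point.

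The step I expect to be the main obstacle is the propagation from $p_0$ to all of $|D|$: a priori the Stokes data, hence the local normal form, could vary along $|D|$, and the key point is that once $Z$ is split the two candidate sections $Z_0,Z_\infty$ are globally defined holomorphic sections over $V$, so the defect of their $\mathcal H$-invariance organizes into a global holomorphic section of a line bundle on a connected manifold, which cannot vanish on a proper open subset. The passage to an \'etale double cover at the outset is exactly what makes $Z_0,Z_\infty$ available globally; without it the equivalence genuinely fails, since $\mathcal H$ may have trivial Stokes everywhere along an unsplit $D$ yet admit neither a global invariant section nor a global closed defining form.
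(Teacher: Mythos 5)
Your overall architecture (reduce to the split case by an \'etale double cover, show that trivial Stokes at one point yields two global $\mathcal H$-invariant sections, then conclude with Lemma \ref{lem:MultisectionH} and Lemma \ref{lem:ClosedFormStokes}) matches the paper's. But your propagation step has a genuine gap. You assert that if the Stokes matrices are trivial at $p_0\in V_{i_0}$ then $b_{i_0}\equiv c_{i_0}\equiv 0$ in the coordinates of Remark \ref{rmk:FlatCoord}. That is not what trivial Stokes gives you: it gives the \emph{existence} of some holomorphic bundle coordinate near $p_0$ in which the $b$- and $c$-terms vanish, equivalently the existence of two invariant analytic sections inducing the formal ones. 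In the trivialization you fixed beforehand (chosen only so that $b_i,c_i$ vanish to high \emph{finite} order and so that the transitions are $z_i=\gamma_{ij}z_j$), the sections $\{z_{i_0}=0\}$ and $\{z_{i_0}=\infty\}$ agree with the true invariant sections only to high order, and $c_{i_0}$ can perfectly well be a nonzero function vanishing to high order even when the Stokes data is trivial (apply to the exact model a diagonal-breaking gauge transformation tangent to the identity to order $N$; this changes $b,c$ but not the Stokes matrices). Consequently your global section $\theta$ of the line bundle on $Z_0\cong V$ need not vanish on any open set, and the identity-theorem argument never gets started. The equivalence ``$Z_0$ invariant $\iff c_i\equiv 0$'' is fine; the false step is ``(iii) at $p_0$ $\Rightarrow$ $c_{i_0}\equiv 0$''.

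What is needed instead --- and what the paper does --- is to propagate the invariant analytic sections themselves rather than the coefficients of a fixed trivialization: along the smooth part of $|D|$ the Riccati foliation is locally a product of a one-variable Riccati foliation by a disk (Lemma \ref{L:trivializa}, Section \ref{sec:locfac}), so the property ``there exist two invariant analytic sections over a transversal'' is locally constant along $|D|$, and it is checked directly on the local models at the crossing points; since these local sections induce the unique pair of formal sections, they glue, and in the split case they assemble into two global $\mathcal H$-invariant sections, after which Lemma \ref{lem:MultisectionH} applies. Your reduction to the split case, your derivation of (i) from (ii), and the converse implications via Lemma \ref{lem:ClosedFormStokes} are all correct; only the propagation mechanism needs to be replaced.
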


\begin{proof}
If the local model for $\mathcal H$ has trivial Stokes at $p$ (which is automatic in the first item,  see Lemma \ref{lem:ClosedFormStokes}),
then it has two invariant
analytic sections. By using local trivializations of $\mathcal H$ given in Section \ref{sec:locfac} (see also Remark \ref{rmk:FlatCoord}), we see that this property
propagates all along $\vert D\vert$. By the way, we get a two-fold section of the bundle; since they
are locally attached to the two irreducible components of $Z$, we get actually two distinct global $\mathcal H$-invariant sections.
It follows from Lemma \ref{lem:MultisectionH} that $\mathcal H$ is defined by a meromorphic closed $1$-form.
Finally, it follows from Lemma \ref{lem:ClosedFormStokes} that, if defined by a closed $1$-form, then Stokes are
trivial at any point $p\in\vert D\vert$.
\end{proof}

\begin{prop}\label{prop:ClosedFormIrregRiccatiFiniteCover}
Let $(P,\mathcal H)$ be an irregular Riccati foliation on a connected complex surface $S$.
If there exists a generically finite morphism $f:\tilde S\to S$ such that $f^*(P, \mathcal H)$
is defined by a closed rational $1$-form, then there exists another one $f':\tilde S'\to S$ of degree at most two
with the same property.
\end{prop}
\begin{proof}The pull-back $(\tilde P,\tilde{\mathcal H}):=f^*(P, \mathcal H)$ must be also irregular; 
in particular, $\tilde{\mathcal H}$ does not admit non constant rational first integral.
By Proposition \ref{prop:RiccClosedForm}, we deduce that, up to a birational bundle transformation,
$\tilde{\mathcal H}$ is defined by a Riccati of the form $\frac{dz}{z}+\omega=0$ for a closed
rational $1$-form $\omega$ on $S$ (note that $dz+\omega=0$ is regular). In particular,
the two sections $z=0,\infty$ are $\tilde{\mathcal H}$-invariant. Pushing them down via $f$,
we get a $n$-section of $P$ which is $\mathcal H$-invariant, $n\ge2$. 
We can apply Lemma \ref{lem:MultisectionH}: by irregularity, $\mathcal H$ cannot have
non constant meromorphic first integral and is therefore defined by a closed rational $1$-form 
maybe passing to a two-fold cover $X'\to X$.
\end{proof}

\subsection{Monodromy around the irregular divisor}

\begin{prop}\label{P:monodromia irregular}
Let $(P,\mathcal H)$ be a reduced Riccati foliation on a complex surface $S$.
Let $D$ be a connected component of the irregular divisor $I$ of the form $D=k D_{red}$.
Then, there is a neighborhood $V$ of $|D|$ in $S$ in restriction to which
the monodromy of $(P,\mathcal H)$ is virtually abelian. More precisely,
at least one of the following assertions holds true.
\begin{enumerate}
\item\label{ItemClosedCase} Maybe after passing to an \'{e}tale covering $\tilde V\to V$ of degree two,
the (pulled-back) Riccati foliation $\mathcal H$ is defined by a closed meromorphic $1$-form 
over $\tilde V$ (and the monodromy is abelian).
\item\label{ItemFactorCase} The monodromy factors through a $C^{\infty}$-fibration $f: V - D \to \mathbb D^*$ 
and is therefore cyclic.
\end{enumerate}
If the supports of $(\mathcal H)_{\infty}$ and  $D$ do not coincide on $V$, then we are in case {\rm (1)}.
\end{prop}
\begin{proof}
Let $V$ be a small tubular neighborhood of $D$, and let  $R: V \to |D|$
be a deformation retract. Since the statement is local, from now on $\mathcal H$ will be seen as a Riccati foliation defined
over $V$. 
Maybe passing to an \'{e}tale covering $\tilde V\to V$ of degree two, we may assume 
we are in the split case:
 the singular locus $\sing(\mathcal H)$ consists in two disjoint sections $D\to P\vert_D$,
and fibers over singular points of $D$.

If all the  Stokes matrices along $D$ are trivial, then Proposition \ref{prop:ClosedFormStokes}
implies that we are in case (1) of the statement.
If $(\mathcal H)_{\infty}$ and $D$ have distinct supports in $V$,
this means that $(\mathcal H)_{\infty}$ contains at least one logarithmic component,
intersecting $|D|$ at some point $p$;
the model at $p$ is (\ref{ModelIrreg0}) in Theorem \ref{THM:ReductionRiccati}, 
and all Stokes are therefore trivial along $D$.

Let us now assume that Stokes matrices are non trivial at a smooth point $x_0 \in D$.
Let $\Sigma$ be a germ of curve transverse  to $D$ at $x_0$.
Let $\pi:P\to V$ be the natural projection. The restriction of
$\mathcal H$ to $\pi^{-1}(\Sigma)$,  is a Riccati foliation over $\Sigma$ with an invariant fiber $\{ x_0\} \times \P^1$ having two saddle-nodes over it. As explained in Section \ref{S:stokes}, if $k$ is the order of $D$ at $x_0$ then there are $2k$ closed sectors on $\Sigma$,
such that over the interior of each of them, the Riccati foliation is analytically conjugated to $\frac{dz}{z} + \frac{dx}{x^{k+1}}+\lambda\frac{dx}{x}$  and the
conjugation extends continuously to the boundary. Over each of these sectors there are exactly two leaves with distinguished topological behavior:
the closure of each of these distinguished leaves
intersect the central fiber $\{ x_0\} \times \P^1$  at a unique point. 

\begin{figure}
\begin{center}
\begin{tikzpicture}
    \begin{scope}
      \draw (150:3) node {$Re(x^3)=0$}    ;
     \draw  (40:1.9) node {$\mathscr S_{\Sigma}$}    ;
      \draw (0,-2.7) node {The  sector $\mathscr S_{\Sigma}$.}    ;
    \foreach \angle in {30,90,150,210,270,330}
      \draw[dashed] (0,0) -- (\angle :2.5);

    \foreach \angle in { 150 }
     \path [draw ,fill= lightgray,semitransparent] (0,0) -- (\angle-40:1.5) arc (\angle-40:\angle + 40 :1.5) -- (0,0);

    \end{scope}

    \begin{scope}[shift={(6,0)}]

      \draw (0,-2.7) node {The intersection of   $\mathscr S$ and $\Sigma$.}    ;
    \foreach \angle in {30,90,150,210,270,330}
      \draw[dashed] (0,0) -- (\angle :2.5);

    \foreach \angle in {30,150,270 }
     \path [draw ,fill= lightgray,semitransparent] (0,0) -- (\angle-40:1.5) arc (\angle-40:\angle + 40 :1.5) -- (0,0);

    \end{scope}
\end{tikzpicture}
\caption{}
\end{center}
\end{figure}

Let $\mathscr S_{\Sigma}$ be the interior
of one of these   sectors. The local triviality of $\mathcal H$ along the smooth part of $D$, and the local normal form of $\mathcal H$ at the singularities of $D$, allow us to construct an open set $\mathscr S \subset V - |D|$ which  extends $\mathscr S_{\Sigma}$
and over which we never lose sight of the two distinguished leaves of the restriction of $\mathcal H$ to $\pi^{-1}(\mathscr{S}_{\Sigma})$. This open subset can be indeed realized as the saturation of $\mathscr S_{\Sigma}$ by the fibers 
of the fibration $f\vert_{V - |D|} : V-|D| \to \mathbb D^*$  defined by Lemma \ref{lem:CinftyFibration}. The  intersection of the resulting open set $\mathscr S$ and the initial transversal $\Sigma$
has $r = \# \psi(\pi_1(|D|))$ distinct connected components,  where $\psi:\pi_1(|D|) \to \C^*$ is the unitary representation
defined in Remark \ref{rmk:representationIrregular}.

 The restriction of the monodromy representation of $\mathcal H$ to $\mathscr S$ has its image (up to conjugation)
contained in  $\C^*$ as $\mathcal H$ over $\mathscr S$   has two distinguished leaves which are not permuted
because of the assumption on $\mathrm{sing}(\mathcal H)$. By construction, the set $\mathscr S$ has the form $f^{-1} (S)$ with $S$ a contractible open set of ${\mathbb D}^*$. Therefore, it has the same homotopy type as the general fiber. 
In particular,
$\pi_1(\mathscr S)$ is a normal subgroup of $\pi_1(V-|D|)$ thanks to the vanishing of $\pi_2({\mathbb D}^*)$. If we do this construction choosing base points at two adjacent sectors with
non trivial Stokes transition matrix, we conclude that indeed the monodromy of $\mathcal H$ on $\mathscr S$ is trivial since a
nontrivial Stokes matrix at one hand conjugates the corresponding monodromy representations, and at the other hand it does not
respect the fixed points of the two monodromy representations. We conclude that in the presence of a nontrivial Stokes matrix
the monodromy factors through $f_*: \pi_1(V-|D|) \to \pi_1(\mathbb D^*)$ as wanted.
\end{proof}

\begin{lemma}\label{L:finalpiece}
Let $(P,\mathcal H)$, $D$, and $k$ be as in the statement of Proposition \ref{P:monodromia irregular}
and assume that we are not in case (\ref{ItemClosedCase}).
If the normal bundle of $D_{red}$ has torsion order $\ge k$,
then the monodromy of $(P,\mathcal H)$ is non-trivial.
\end{lemma}
\begin{proof}
Notation as in the proof of Proposition \ref{P:monodromia irregular}. 
Consider the unitary representation $\psi:\pi_1(|D|) \to \C^*$
defined in Remark \ref{rmk:representationIrregular};  
the torsion order $r$ of the normal bundle of $D$ satisfies $r = \# \psi(\pi_1(|D|))$.
By Proposition \ref{P:flat}, we know that $r$ divides $k$ (resp. $2k$) in the split (resp. unsplit) case. 
Since $r\ge k$, we have $r=k$ or $2k$.

Consider first the split case: this obviously implies $r=k$. 
Let $\gamma \in |D|$ a loop such that $\psi(\gamma)= \exp(2\pi i /k)$
We can lift $\gamma$ to a path $\tilde{\gamma}$ in a region $\mathscr S$ such that the initial and final points lie in two consecutive
small sectors. Moreover, since $\mathcal H$ is not given by a closed meromorphic $1$-form we can assume that at least one of the two distinguished leaves over the initial
sector is not a distinguished leaf for the final sector. Closing the path $\tilde{\gamma}$ using an arc on the transversal $\Sigma$,
we obtain a path in $V-|D|$ with non-trivial
monodromy.

Consider now the unsplit case.
Going back to the proof of Proposition \ref{P:flat}, we see that there must be some 
loop $\gamma \in |D|$ such that $\psi(\gamma)^k= -1$. This shows that $r=k$ is impossible,
thus $r=2k$ in this case. After a two-fold \'etale covering $\tilde V\to V$, we are in the previous split case
with normal bundle having torsion $k$ coinciding with irregularity index. We thus obtain a loop
$\tilde\gamma$ with non trivial monodromy on $\tilde V$; its projection on $V$ has the same property.
\end{proof}

\section{Structure}\label{S:structure}

\subsection{Proof of Theorem \ref{THM:C}}
Let $(P,\mathcal H)$ be a Riccati foliation over the projective manifold $X$ with irregular singular points.
Assuming that it is not defined by a closed $1$-form after a two-fold covering, our aim is to prove that it factors through a curve. 

By Proposition \ref{prop:ClosedFormIrregRiccatiFiniteCover},
we may assume that $f^*(P,\mathcal H)$ is not defined by a closed rational $1$-form 
for any generically finite dominant rational map $f:X'\dashrightarrow X$.
According to Proposition \ref{P:reduction2}, it suffices to prove the factorization of $(P,\mathcal H)$
in restriction to a surface $S\subset X$ obtained as an intersection of general hyperplane sections. According to Proposition \ref{P:reduction2closed} (and an obvious induction on the dimension),
we may assume that the restriction is not defined by a closed rational $1$-form after any 
generically finite dominant rational map $f:S'\dashrightarrow S$.
Furthermore, Proposition \ref{P:pushRiccati}  allows us  to assume that the singularities of $(P,\mathcal H)$ are as in the conclusion of Theorem \ref{THM:ReductionRiccati}. 

Let $D$ be a connected component of the irregular divisor $I$: we have $D=kD_{red}$.
It follows from Proposition \ref{P:flat} that $D$ has torsion normal bundle, and from Proposition \ref{P:monodromia irregular}
that the monodromy of $(P,\mathcal H)$ is virtually abelian at the neighborhood $V$ of $D$.

{\it First case: the global monodromy of $(P,\mathcal H)$ on $S$ is not virtually abelian.}
In particular, it is strictly larger than the local monodromy around $D$: 
$$\rho(\pi_1(S-|(\mathcal H)_\infty|))\ \not=\ \rho(\pi_1(V-|(\mathcal H)_\infty|)).$$
Then arguments used in the proof of Theorem \ref{T:criteriofactor} show that $D$ is the fiber of a fibration
$f:S\to C$ (we use extra topology in $S-V$ to construct a ramified cover with several disjoint copies of $D$
and then apply Theorem \ref{T:Totarobis}). Let $U$ a dense open Zariski subset of $S-|(\mathcal H)_\infty|$ such that $f_{|U}$ is a topologically a locally trivial fibration on its image\footnote{Beware that the general fiber of $f_{|U}$ is not necessarily compact, especially when $D$ intersect a logaritmic component of the polar locus $(\mathcal H)_\infty$.}. As in the proof of Theorem \ref{T:criteriofactor}, the monodromy of a general fiber of $f_{|U}$ is a normal subgroup
of the global monodromy group and is therefore trivial. Thus, the monodromy representation factors through $f$.
But the general fiber of $f$ does not intersect
the irregular divisor and we can apply Proposition \ref{P:pullRiccati} to conclude that the Riccati foliation
factors as well.

We can now assume that the global monodromy is virtually abelian, and after passing to a finite covering,
that it is  torsion free. Thus the global monodromy either is abelian and infinite, or trivial.

{\it Second case: the global monodromy of $(P,\mathcal H)$ on $S$ is abelian and infinite.}

We first aim to prove that $D$ is a fiber of a fibration $S\to C$ (up to finite cover).
If the global monodromy is strictly larger than the local monodromy around $D$,
we can argue as in the first case: we can use the extra monodromy to produce a finite cover
on which we have several copies of $D$ providing a fibration. Assume now
$$\rho(\pi_1(S-|(\mathcal H)_\infty|))\ =\ \rho(\pi_1(V-|(\mathcal H)_\infty|)).$$
If the Riccati foliation $\mathcal H$ nearby $D$ satisfies conclusion (1) of Proposition \ref{P:monodromia irregular}, then there is a $2$-section $V\dashrightarrow P\vert_V$ which is invariant by $\mathcal H$,
and in particular by the monodromy. This $2$-section therefore extends outside the extra poles 
of $\mathcal H$. If the Riccati foliation has only logarithmic poles outside of $D$, then the $2$-section
extends on the whole of $S$, proving that $\mathcal H$ is given by a closed $1$-form on a two-fold cover
(see  Lemma \ref{lem:MultisectionH}), contradiction. If, on the contrary, $\mathcal H$ has 
an extra irregular pole, say $D'$, disjoint from $D$, both of them having torsion normal bundle by 
Proposition \ref{P:flat}; then, we get a fibration by Theorem \ref{T:Totarobis}.

In conclusion (2) of Proposition \ref{P:monodromia irregular}, the monodromy factors, around $D$,  
through the $C^{\infty}$-fibration $f:V-D\to\mathbb D^*$: the monodromy is infinite cyclic. If the representation takes values in $\mathbb G_a$  we  compose a suitable multiple of it with an exponential so that we can assume that it takes  values in $\mathbb G_m$ and is still infinite.   
Consider the Deligne logarithmic flat connection $(L,\nabla)$
realizing this representation. By construction, the residue of $\nabla$ along an irreducible component $E_i$ 
of the polar divisor takes the form $k_i\lambda+n_i$, where $\lambda$ is an irrational number (the monodromy is infinite) and $k_i,n_i$ are integers. 
The residue formula gives 
$$0=\sum_i\mathrm{Res}_{E_i}c_1(E_i)\ +\ c_1(L)=\left(\sum_ik_ic_1(E_i)\right)\lambda\ +\ \left(\sum_in_ic_1(E_i)\ +\ c_1(L)\right).$$
Since $\lambda$ is irrational, each term in parenthesis is zero and we get in particular
$$0=\sum_ik_ic_1(E_i)=c_1(D)-c_1(E)$$
where $D$ is our connected component of the irregular divisor, and $E$ is a divisor  disjoint
from $D$ and contained in  $\vert(\mathcal H)_\infty\vert$. After splitting $E=E_+-E_-$
with $E_+$ and $E_-$ effective, we know that $E_+$ and $E_-$ have non positive
self-intersection by Hodge index Theorem. On the other hand, the equality
$$0=D\cdot D=E\cdot E=E_+\cdot E_+ - 2 E_+\cdot E_- + E_-\cdot E_-$$
forces each term, {\it a priori} $\le0$, to be zero. Finally, again by Hodge 
index Theorem, $E_+$ and $E_-$ must be disjoint and have Chern classes  proportional to
the Chern class of $D$. We can apply Theorem \ref{T:Totarobis} to produce a fibration.

We have just proved, in the case monodromy is   abelian and infinite, that $D$ is fiber of a holomorphic fibration $f:S\to C$ (we keep the same notation as for the $C^{\infty}$-fibration).
It remains to show that the Riccati foliation $\mathcal H$ factors. When the monodromy
is trivial along a generic fiber of $f$, this clearly follows from Proposition \ref{P:pullRiccati}.
If not, we have infinite monodromy along fibers and, by looking at a fiber close to $D$, we must be in case (1)
of Proposition \ref{P:monodromia irregular}. In particular, there is a $2$-section of the $\P^1$-bundle $P$ that are invariant by the Riccati foliation $\mathcal H$. In fact, since the monodromy is torsion-free,
we are in the split case: the $2$-section splits into two disjoint sections.
These two sections are tangent to the Riccati foliation $\mathcal H$ and also to the pull-back to $P$ of the foliation defined by the fibration. We obtain two
curves in the Hilbert scheme of $P$. Since tangency to $\mathcal H$ imposes a closed condition on the Hilbert scheme of $P$,
these two curves have Zariski closure of dimension one (recall that over each fiber of $f$ we have exactly two sections of $P$). Thus they spread two surfaces, sections of $P$, which are invariant by $\mathcal H$.
By Lemma \ref{lem:MultisectionH}, $\mathcal H$ is defined by a closed $1$-form, contradiction.

{\it Third case: the monodromy of the Riccati foliation $\mathcal H$ is trivial.}
Like in the previous case, if $\mathcal H$ is defined by a closed $1$-form at the neighborhood of $D$,
then it extends as a global $1$-form, except if there is another irregular polar component, in which case
we get two disjoint divisors with torsion normal bundle (see Proposition \ref{P:flat})
and therefore a fibration (see Theorem \ref{T:Totarobis}) through which $\mathcal H$ factors (see Proposition \ref{P:pullRiccati}).
On the other hand, if we are in case (2) of Proposition \ref{P:monodromia irregular},
then Lemma \ref{L:finalpiece} implies that $r < k$, i.e. the order of the normal bundle of $D_{red}$ is strictly smaller
than the multiplicity of the irregular divisor. The existence of a fibration with a fiber supported on $|D|$ follows
from Theorem \ref{T:Neeman}, and the Riccati foliation factors according to Proposition \ref{P:pullRiccati}.
\qed

\subsection{Proof of Theorem \ref{THM:E}}
Let $(E,\nabla)$ be a flat meromorphic $\mathfrak{sl}_2$-connection on $X$.
In the case $(E,\nabla)$ is regular, then the conclusion of Theorem  \ref{THM:E}
directly follows from Corollary \ref{COR:B}. Indeed, the Riemann-Hilbert correspondance
establishes a one-to-one correspondance between representations up to conjugacy 
and regular connections up to birational bundle tranformations (see \cite{Deligne}). For instance, 
if the monodromy is virtually abelian, i.e. abelian after a finite cover, then it is 
either diagonal, or unipotent, and can be realized by one of the two models 
of case (1) in Theorem  \ref{THM:E}; the Riemann-Hilbert correspondance
provides the birational equivalence. Similarly, cases (1) and (2) of Corollary \ref{COR:B}
for the monodromy respectively imply cases (2) and (3) of Theorem  \ref{THM:E}
for the regular connection. Let us now assume $(E,\nabla)$ irregular.

Let us consider $P:=\P (E)$ the $\P^1$-bundle associated to $E$; horizontal section of $\nabla$
induce a Riccati foliation $\mathcal H$ on $\pi:P\to X$ which is irregular by assumption.
We can apply Theorem \ref{THM:C} to the projective connection/Riccati foliation $(P,\mathcal H)$
and discuss the  two possible conclusions.

Assume first that $\mathcal H$ is defined by a closed $1$-form (maybe passing to a finite covering of $X$).
After birational bundle transformation, we can assume $P_0=X\times\P^1$ and $\mathcal H_0$
defined by
$$\Omega_0= \frac{dz}{z}+2\omega\ \ \ \text{or}\ \ \ \ \Omega_0 = dz+\omega$$
with $\omega$ a closed $1$-form on $X$ (see Proposition \ref{prop:RiccClosedForm}).
These Riccati foliations are   induced by those explicit connections of Corollary \ref{THM:E} (1):
$$\nabla_0=d+\begin{pmatrix}\omega&0\\ 0&-\omega\end{pmatrix}\ \ \ \text{or}\ \ \ \nabla_0= d+\begin{pmatrix}0&\omega\\ 0&0\end{pmatrix}$$
on the trivial bundle $E_0:=\mathcal O_X\oplus\mathcal O_X$.
There is a birational bundle trivialization $E\dashrightarrow E_0$ making commutative the diagram
$$\xymatrix{
    E \ar@{.>}[r]^\phi \ar[d] & E_0 \ar[d] \\
    P \ar@{.>}[r]  & P_0
  }
  $$
Obviously, $\phi_*\nabla$ is projectively equivalent to one of the models $\nabla_0$:
$\phi_*\nabla=\nabla_0\otimes\zeta$
with $(\mathcal O_X,\zeta)$ a flat rank one connection over $X$, birationally equivalent to the trivial connection by construction.
This means that one can write $\zeta=d+\frac{df}{f}$ and, maybe tensoring by the logarithmic connection $(\mathcal O_X,d+\frac{1}{2}\frac{df}{f})$
(whose square has trivial monodromy), we get equality $\phi_*\nabla=\nabla_0$. Note that, passing to the $2$-fold covering defined
by $z^2=f$, the connection $\nabla$ is birationally gauge equivalent to $\nabla_0$ (without tensoring).

Assume now that $(P,\mathcal H)$ is birationally gauge equivalent to the pull-back $f^*(P_0,\mathcal H_0)$
of a Riccati foliation over a curve, $f:X\dashrightarrow C$ with $P_0=C\times\P^1$. Denote by $\nabla_0$
the unique $\mathfrak{sl}_2$-connection on the trivial bundle $E_0$ over $C$ inducing the projective connection $(P_0,\mathcal H_0)$.
Then $(E,\nabla)$ is birationally equivalent to $f^* (E_0,\nabla_0) \otimes ( \mathcal O_X ,\zeta)$ with $\zeta$ logarithmic rank one connection  having
  monodromy in the center of $\SL(\C)$.
\qed

\section{Transversely projective foliations}\label{SecTransvProj}

\subsection{Basic Lemma}\label{SecBasicLemma}
Let $\mathcal F$ be a transversely projective foliation on a projective manifold $X$
defined by a triple $(\omega_0,\omega_1,\omega_2)$.
The Riccati foliation $\mathcal H$ defined on $X\times\P^1$ by
$$dz+\omega_0+z\omega_1+z^2\omega_2=0$$
is integrable (\ref{IntCondOmega})
$$\left\{\begin{matrix}
d\omega_0=\hfill\omega_0\wedge\omega_1\\
d\omega_1=2\omega_0\wedge\omega_2\\
d\omega_2=\hfill\omega_1\wedge\omega_2
\end{matrix}\right.$$
and the foliation $\mathcal F$ is defined by restricting $\mathcal H$ to the section $\sigma:X\to P$ given by $z=0$.
Another projective triple $(\omega_0',\omega_1',\omega_2')$ defines the same
transversely projective foliation, i.e. the same
foliation $\mathcal F$ with the same collection of local first integrals at a general point of $X$ if,
and only if, there are rational functions $a,b$ on $X$ such that $a\not\equiv0$ and
\begin{equation}\label{ChangeTriple}
\left\{\begin{matrix}
\omega_0'=a\omega_0\hfill\\
\omega_1'=\omega_1-\frac{da}{a}+2b\omega_0\hfill\\
\omega_2'=\frac{1}{a}\left(\omega_2+b\omega_1+b^2\omega_0-db\right)
\end{matrix}\right.
\end{equation}
(see \cite{Scardua}).
This exactly means that the Riccati foliation $\mathcal H'$ defined by $\Omega'=d\tilde z+\omega_0'+\tilde z\omega_1'+\tilde z^2\omega_2'$
is derived by gauge transformation $\frac{1}{z}=a\frac{1}{\tilde z}+b$. 
The following is well known (cf \cite[Chap. II, Prop. 2.1, p.193]{Scardua}
and \cite[Lemma 2.20] {Croco2}).

\begin{lemma}\label{LemFClosedHClosed}
If $\mathcal F$ is defined by a closed $1$-form $\omega$, $d\omega=0$, then up to gauge transformation (\ref{ChangeTriple}), 
the Riccati foliation $\mathcal H$ is defined by 
$$\Omega=dz+\omega(1+\phi z^2)\ \ \ \text{where}\ \phi\ \text{is a first integral for}\ \mathcal F.$$
\end{lemma}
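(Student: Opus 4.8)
The plan is to use the freedom in the projective triple, encoded by the gauge transformations (\ref{ChangeTriple}), to successively normalize $\omega_0$, then $\omega_1$, and finally to read off the stated form of $\mathcal H$ directly from the integrability relations (\ref{IntCondOmega}).

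First I would note that $\mathcal F$ is defined both by $\omega_0$ and by the closed form $\omega$, so these two rational $1$-forms are proportional: $\omega_0 = g\,\omega$ for some $g \in \C(X)^{\ast}$ (the quotient of two proportional rational $1$-forms on a projective manifold is a rational function). Applying (\ref{ChangeTriple}) with $a = g^{-1}$ and $b = 0$ replaces the triple by one with $\omega_0 = \omega$, hence $d\omega_0 = 0$, at the cost of modifying $\omega_1$ and $\omega_2$. After relabelling, we may therefore assume from the outset that $\omega_0 = \omega$ is closed.

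Next I would exploit the first relation in (\ref{IntCondOmega}): $0 = d\omega_0 = \omega_0 \wedge \omega_1 = \omega \wedge \omega_1$, so $\omega_1 = h\,\omega$ for some $h \in \C(X)$. Applying (\ref{ChangeTriple}) a second time, now with $a = 1$ and $b = -h/2$, leaves $\omega_0 = \omega$ unchanged and turns $\omega_1$ into $\omega_1 + 2b\,\omega = 0$. We have thus reduced to a triple of the form $(\omega, 0, \omega_2)$. Feeding this back into (\ref{IntCondOmega}): the middle relation gives $0 = d\omega_1 = 2\,\omega \wedge \omega_2$, so $\omega_2 = \phi\,\omega$ for some $\phi \in \C(X)$; the last relation gives $0 = d\omega_2 = d\phi \wedge \omega + \phi\,d\omega = d\phi \wedge \omega$, which is exactly the statement that $\phi$ is a rational first integral of $\omega$, i.e. of $\mathcal F$. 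Consequently $\mathcal H$ is defined by $\Omega = dz + \omega_0 + z\,\omega_1 + z^2\,\omega_2 = dz + \omega(1 + \phi z^2)$, as claimed.

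I do not expect a serious obstacle: the argument is a short computation with the explicit gauge action, and the fact that (\ref{ChangeTriple}) preserves the integrability system (\ref{IntCondOmega}) is built into its derivation. The only points requiring care are that the proportionality statements ($\omega_0 = g\,\omega$, $\omega_1 = h\,\omega$, $\omega_2 = \phi\,\omega$) hold with genuine rational functions, and that the relabelling after the first normalization be carried out before invoking (\ref{IntCondOmega}) again, since at that stage $\omega_1$ and $\omega_2$ are no longer the original forms.
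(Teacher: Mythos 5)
Your proposal is correct and follows essentially the same route as the paper's proof: normalize $\omega_0=\omega$ by a gauge with $a=g^{-1}$, deduce $\omega_1=h\,\omega$ from $d\omega_0=\omega_0\wedge\omega_1=0$ and kill it with $b=-h/2$, then read $\omega_2=\phi\,\omega$ and $d\phi\wedge\omega=0$ off the remaining integrability relations. The only difference is that you make the choice of $b$ explicit where the paper merely asserts that $\omega_1$ can be killed.
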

\begin{proof}One can first write $\omega=a\omega_0$ for some rational function $a$ and use 
gauge transformation (\ref{ChangeTriple}) to set $(\omega_0,\omega_1,\omega_2)=(\omega,0,\omega_2')$.
Indeed, once $\omega_0=\omega$ is closed, we deduce from (\ref{IntCondOmega}) that $\omega_0\wedge\omega_1=0$,
and therefore $\omega_1=b\omega_0$
for some rational function $b$; consequently, $\omega_1$ can be killed by gauge transformation.
Finally, integrability condition gives $d\omega_2'=\omega\wedge\omega_2'=0$, which means that $\omega_2'=f\omega$
and $d(f\omega)=0$ for some rational function $f$. The latter condition, together with closedness of $\omega$, 
gives $df\wedge\omega=0$, i.e. $f$ is a first integral for $\mathcal F$.
\end{proof}

\begin{cor}\label{CorFclosedHclosed}
If $\mathcal F$ is defined by a closed $1$-form, then $\mathcal H$ is also defined by a closed $1$-form, 
or factors through a curve. If $\mathcal F$ admits a rational first integral, then $\mathcal H$ factors through a curve.
\end{cor}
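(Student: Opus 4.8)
The plan is to run everything through the normal form supplied by Lemma~\ref{LemFClosedHClosed}: once $\mathcal F$ is defined by a closed rational $1$-form $\omega$, that lemma puts $\mathcal H$, after a gauge transformation (\ref{ChangeTriple}), in the shape $\Omega=dz+\omega(1+\phi z^2)$ with $\phi$ a rational first integral of $\mathcal F$ (so $d\phi\wedge\omega=0$). I would then split according to whether or not $\mathcal F$ admits a non-constant rational first integral; in the first case $\mathcal H$ will factor through a curve, in the second $\mathcal H$ will be defined by a closed $1$-form.

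Suppose first that $\mathcal F$ admits a non-constant rational first integral $\psi$; this case covers the whole of the second assertion (a rational first integral makes $\mathcal F$ definable by a closed $1$-form, to which Lemma~\ref{LemFClosedHClosed} applies) together with part of the first. The field of rational first integrals of the codimension-one foliation $\mathcal F$ has transcendence degree one over $\C$ — two algebraically independent ones $\psi_1,\psi_2$ would force $d\psi_1\wedge d\psi_2=0$, since each $d\psi_i$ is a rational multiple of $\omega$ — so $\mathcal F$ is the pull-back of the foliation by points under a rational fibration $f:X\dashrightarrow C$ onto a curve $C$, which one may take so that every rational first integral of $\mathcal F$ is pulled back from $C$. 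Then $\phi$ descends to $C$; and writing $\omega=g\,d\psi$ with $g\in\C(X)$, closedness of $\omega$ forces $dg\wedge d\psi=0$, so $g$ (being a first integral) also descends, whence $\omega$ is the pull-back of a rational $1$-form on $C$. Consequently $\Omega=dz+\omega(1+\phi z^2)$ is the pull-back under $f\times\mathrm{id}$ of a Riccati $1$-form on $C\times\mathbb P^1$, i.e. $\mathcal H$ factors through $C$.

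Suppose now that $\mathcal F$ has no non-constant rational first integral; then the $\phi$ above is a constant $c$. If $c=0$ then $\Omega=dz+\omega$ is already closed. If $c\neq 0$, fix $\sqrt c\in\C^*$: the restriction of $\Omega$ to a constant section $\{z=z_0\}$ is $\omega(1+cz_0^2)$, which vanishes exactly for $z_0=\pm i/\sqrt c$, so $\{z=i/\sqrt c\}$ and $\{z=-i/\sqrt c\}$ are two disjoint $\mathcal H$-invariant sections. The Möbius bundle automorphism $w=(\sqrt c\,z-i)/(\sqrt c\,z+i)$ carries them to $\{w=0\}$ and $\{w=\infty\}$; invariance of these kills the $w^0$- and $w^2$-coefficients of the transformed Riccati form, leaving $dw+w\,\eta$, and the integrability system (\ref{IntCondOmega}) then forces $d\eta=0$, so $\frac{dw}{w}+\eta$ is a closed $1$-form defining $\mathcal H$. (Alternatively one could invoke Lemma~\ref{lem:MultisectionH} for these two sections; no double cover is needed, as they are genuine disjoint sections and not an irreducible double section.) I do not anticipate a real obstacle: the whole argument is a short case split on top of Lemma~\ref{LemFClosedHClosed}, the only point requiring a little care being the descent along $f$ in the first case, which is the standard structure theory of algebraically integrable codimension-one foliations.
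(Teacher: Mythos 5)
Your proof is correct and takes essentially the same route as the paper: both reduce to the normal form $\Omega=dz+\omega(1+\phi z^2)$ of Lemma~\ref{LemFClosedHClosed} and then split on whether $\mathcal F$ has a non-constant rational first integral, obtaining a factorization through the curve given by the (Stein factorization of the) first integral in one case and a closed defining $1$-form in the other. The only cosmetic difference is in the constant-$\phi$ case: the paper observes directly that $\Omega$ is proportional to the closed form $\frac{dz}{1+cz^2}+\omega$, whereas you reach the equivalent expression $\frac{dw}{w}+\eta$ via the M\"obius change of fiber coordinate sending the two invariant sections to $0$ and $\infty$.
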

\begin{proof}If $\mathcal F$ admits a rational first integral $f$, then after resolution of indeterminacy points of $f$ by blowing-ups $\tilde X\to X$,  Stein Factorization gives a fibration $\tilde f:\tilde X\to C$ over a curve with connected 
fibers coinciding generically with leaves of $\tilde{\mathcal F}$. Applying Lemma \ref{LemFClosedHClosed} to $\omega=df$,
we get that $\phi=\phi(\tilde f)$ and $\mathcal H$ actually factors through $\tilde f$.
If $\mathcal F$ is now defined by a closed $1$-form $\omega$, but does not admit a rational first integral,
then applying Lemma \ref{LemFClosedHClosed}, we get that $\phi=c\in\C$ is a constant, and 
$\mathcal H$ is defined by $\frac{dz}{1+cz^2} +\omega$ which is closed.
\end{proof}

\begin{remark}\label{RmFclosedHclosedLocal}
Statements similar to Lemma \ref{L:finalpiece} and Corollary \ref{CorFclosedHclosed} hold with the very same proofs in the local setting, 
replacing rational functions and $1$-forms by their meromorphic analogues. 
\end{remark}
\begin{proof}[Proof of Lemma \ref{L:trivializa}]
At a generic point $p\in\pi^{-1}(H)$, the foliation $\mathcal H$ is smooth and vertical: 
let $p_0=\pi(p)$ be the projection and $\sigma:(X,p_0)\to P$ be a germ of section transverse to $\mathcal H$. 
The induced transversely projective foliation is regular and therefore admits a holomorphic first integral.
By the local version of Corollary \ref{CorFclosedHclosed}, $\mathcal H$ locally factors through a curve.
\end{proof}

\subsection{Proof of Theorem \ref{THM:D}}
Under previous notations, we now apply to $(P,\mathcal H)$ our results on projective connections (Corollary \ref{COR:B} and Theorem \ref{THM:C}, 
or equivalently Theorem \ref{THM:E}). There are three cases.

{\bf First case.} There is a generically finite morphism $f:Y\to X$ such that $f^*\mathcal H$ is defined
by a closed $1$-form $\Omega$. The pull-back $f^*\mathcal F$ on $Y$ is still defined by restricting 
the Riccati foliation to the pull-back section $\tilde \sigma:Y\to f^*P$; it is therefore also defined by restricting
the closed $1$-form $\Omega$ to the section.

{\bf Second case.} There is a map $f:X\dashrightarrow C$ to a curve
and a Riccati $\mathcal H_0$ on $P_0=C\times\P^1$ such that $(P,\mathcal H)$ is equivalent to
$f^*(P_0,\mathcal H_0)$ by birational bundle transformation. We thus deduce a map
$\Phi:P\dashrightarrow P_0$ such that $\mathcal H=\Phi^*\mathcal H_0$. Consider the composition 
$\Phi\circ\sigma:X\dashrightarrow P_0$. Then either it is dominant and $\mathcal F$ is the corresponding pull-back  
of $\mathcal H_0$ (we are in case (2) of the statement), or the image is a curve and fibers force the leaves of $\mathcal F$ to be algebraic (we are in case (1)).

{\bf Third case.} There exists a rational map $f: X \dashrightarrow \mathfrak H$ to a polydisk Shimura modular orbifold
such that $(P,\mathcal H)$ is equivalent to the pull-back of one the tautological Riccati foliations  $(P_\rho:=\mathfrak H\times_\rho\P^1,\mathcal H_\rho)$ by birational bundle transformation. Again, we deduce a map
$\Phi:P\dashrightarrow P_\rho$ such that $\mathcal H=\Phi^*\mathcal H_\rho$, and considering
$\Phi\circ\sigma:X\to P_\rho$, we can conclude as before: we are in the case (1) or (3) of the statement 
depending whether the image is a curve or higher dimensional (not necessarily dominant).\qed

\subsection{Theorem \ref{THM:D} implies Theorem \ref{THM:E}}

Although we have followed the other direction, it is interesting to notice that our results on
projective (or $\sl$) connections and transversely projective foliations are actually equivalent.
Indeed, given say a Riccati foliation $ (\pi: P \to S,\mathcal H)$, with $P$ birationally trivial,
we can take a general rational section $\sigma:S\to P$ and consider the induced transversely projective foliation
$\mathcal F:=\sigma^*\mathcal H$.
Applying Theorem \ref{THM:D} to $\mathcal F$ gives the following possibilities.

{\bf First case.} The foliation $\mathcal F$ has a rational first integral. By Corollary \ref{CorFclosedHclosed},
we deduce that $\mathcal H$ factors through a curve.

{\bf Second case.} There is a generically finite morphism $f:Y\to X$ such that $f^*\mathcal F$ is defined
by a closed $1$-form $\omega$, but $\mathcal F$ does not admit a rational first integral.
By Corollary \ref{CorFclosedHclosed}, we deduce that $f^*\mathcal H$ is also defined by a closed $1$-form.

{\bf Third case.} Maybe after blowing-up $S$, there is a morphism $f:S\to P_0:=C\times \P^1$ 
and a Riccati foliation $\mathcal H_0$ on $P_0$ such that $\mathcal F=f^*\mathcal H_0$. 
Then, considering now the pull-back $\phi^*\mathcal H_0$ the fiber product:
$$\xymatrix{
	P \ar@{->}[r]^{\phi} \ar[d]_{\pi} & P_0 \ar[d]^{\pi_0} \\
	X \ar@{->}[r]^f  & C
}$$
we get another transversely projective structure for $\mathcal F$.
If $\mathcal F$ is not defined by a closed $1$-form up to finite cover, then its projective 
structure is unique, and $\mathcal H$ is birational to $\mathcal H_0$, thus pull-back from a curve.

{\bf Fourth case.} There exists a polydisk Shimura modular orbifold $\mathfrak H$ and a rational map $f: X \dashrightarrow P_\rho:=\mathfrak H\times_\rho\P^1$ towards one of its tautological Riccati foliations such that $\mathcal F= f^* \mathcal H_\rho$. Like in the previous case, we can prove that $\mathcal F$ is defined by a closed $1$-form after a finite cover
(and go back to the first two cases), or $\mathcal H$ is birationally equivalent to the pull-back of $(P_\rho,\mathcal H_\rho)$
by $f$.\qed

\section{Examples}\label{SecExamples}

We will now  present  some examples which show that our results are sharp.

\begin{example}
Let $Y$ be a projective manifold and consider a representation  $\rho : \pi_1(Y) \to (\C,+)$.   It determines a cohomology class $[\rho] \in H^1(Y,\C)$. If its image under the natural morphism $H^1(Y,\C) \to H^1(Y,\mathcal O_Y)$ is non zero then it determines a non trivial extension $0\to \mathcal O_Y \to E\to  \mathcal O_Y \to 0$, endowed with
a flat connection with monodromy given by 
\[
   \begin{pmatrix}1&\rho\\ 0&1\end{pmatrix}.
\] 
The projectivization $X=\mathbb P(E)$ is a $\mathbb P^1$-bundle over $Y$ with a Riccati foliation defined by a closed rational $1$-form $\omega$ with polar divisor equal to $2\Delta$, where $\Delta$ is image of the unique  section $Y \to \mathbb P(E)$.   If $P$ is the trivial $\mathbb P^1$-bundle $P$ over $X$ then we have a family of Riccati foliations $\mathcal H_{\lambda}$, $\lambda \in \C$, on it defined by
\[
dz + \omega ( 1 + \lambda z^2).
\]
The Riccati foliation $\mathcal H_{\lambda}$ is irregular for $\lambda \neq 0$,  does not factor through a curve, and its irregular divisor is not a
fiber of a fibration. If we take $Y$ equal to an elliptic curve, then $X - |\Delta|$ is nothing but Serre's example
of Stein quasi-projective surface which is not affine.
\end{example}

\begin{example}
Let $n\ge 2$ and let $X$ be the quotient of $\mathbb H^n$ by cocompact torsion-free irreducible lattice $\Gamma \subset \PSL(\mathbb R)^n$.
The natural projections $\mathbb H^n \to \mathbb H$ define $n$ codimension one smooth foliations on $X$ which
are transversely projective (indeed transversely hyperbolic). Contrary to what have been stated by the second author and Mendes in \cite[Theorem 1]{LG}, countably many leaves of these foliations may have nontrivial topology (with  fundamental groups  isomorphic to  isotropy groups of the action of $\Gamma$ on the corresponding one dimensional factor of $\mathbb H^n$),
 but the very general leaf is biholomorphic to $\mathbb H^{n-1}$. The maximal principle tells us that the general leaf cannot contain
 positive dimensional subvarieties, and consequently the foliations are not pull-backs from lower dimensional manifolds.
\end{example}

Again the assumptions on $\Gamma$ can be considerably weakened. All we have to ask is that $\Gamma$ is an irreducible lattice of $\PSL(\mathbb R)^n$ for some $n\ge 2$.
Notice that the rigidity theorem of Margulis (resp. the classification of representations by Corlette and Simpson) implies that all these lattices are commensurable  to (resp. conjugated to a subgroup of) arithmetic lattices of the form $U(P,\Phi)/ {\pm \Id}$ for some totally imaginary quadratic extension $L$ of a totally real number field $F$, some rank two projective $\mathcal O_L$-module $P$ and some skew Hermitian form $\Phi$. Besides the $n$ representations
coming from the $n$  projections $\pi_1^{orb}(X) \simeq \Gamma \subset \PSL(\mathbb R)^n \to \PSL(\mathbb R)$, we also have $[L:\mathbb Q] - 2n$ representations
of $\pi_1^{orb}(X)$ with values in $\PSL(\C)$ which do not factor through lower dimensional projective manifolds. The associated $\mathbb P^1$-bundles
are birationally trivial (since the underlying representation is a Galois conjugate of the representations coming from the transversely projective
foliations on $X$ defined by the submersions $\mathbb H^n \to \mathbb H$), and by taking a rational section we can produce further examples
of transversely projective foliations on $X$ which do not factor. Although the underlying representations are Galois conjugate to the representations
in $\PSL(\mathbb R)$, the topology of the Riccati foliations over $X$ associated to embeddings $\sigma : L \to \C$ for which
$\sqrt{-1} \Phi$ is definite is quite different. In the former case the Riccati foliation leaves invariant two open subsets, corresponding to the complement of $\mathbb P^1(\mathbb R)\simeq S^1$ in $\mathbb P^1$, while in the latter case the Riccati foliation is quasi-minimal: all the leaves not contained in $\pi^{-1}((\mathcal H)_{\infty})$
are dense in the corresponding $\mathbb P^1$-bundle.

Explicit examples of foliations on $\mathbb P^2$  defined by the submersions $\mathbb H^2 \to \mathbb H$ with $\Gamma \subset \PSL(2,\mathbb R)^2$ and $\Gamma$ isomorphic
to $\PSL(\mathcal O_K)$ (and certain subgroups) have been determined by the second author and Mendes in \cite{LG} ($K= \mathbb Q(\sqrt{5})$) using the work of
Hirzebruch on the description of these surfaces, and by Cousin
in \cite{Gael}, see also \cite{GaelTese}, ($K=\mathbb Q(\sqrt{3})$) using an  algebraic solution of  Painlev\'{e} VI equation.

\begin{example}
The degree of the generically finite morphism in Assertion (1) of Theorem \ref{THM:D} cannot be bounded, even if we restrict to transversely projective foliations
on a rational surface.  Let   $C_d = \{ x^d + y^d + z^d =0 \}  \subset \mathbb P^2$ be the
Fermat curve of degree $d \ge 3$. On $S_d = C_d \times C_d$ consider the action of $\mathbb Z/ d \mathbb Z$ given by  $\varphi(x,y) = ( \xi_d x ,  \xi_d y)$
where $\xi_d$ is a primitive $d$-th root of the unity. Let $ \omega \in H^0 (S_d, \Omega^1_{S_d})$ be a general holomorphic $1$-form satisfying $\varphi^* \omega = \xi_d \omega$.
The induced foliation is invariant by the action of $\mathbb Z/ d \mathbb Z$, but the $1$-form $\omega$ is not. The quotient of $S_d$ by $\mathbb Z/d\mathbb Z$
is a rational surface $R$  and the foliation induced by $\omega$ on $R$ is transversely projective (indeed transversely affine). The monodromy group is an
extension of the group of $d$-th of unities by an infinite  subgroup of $(\C, +)$. Explicit equations for birational models of these foliations on $\mathbb P^2$
can be found in \cite[Example 3.1]{jvpSad}.
\end{example}

\appendix

\section{Reduction of singularities of Riccati foliations}\label{SecProofMinimalModel}\label{S:normalform}

This appendix is devoted to the proof of Theorem \ref{THM:ReductionRiccati}.

Let $ (\pi: P \to S,\mathcal H)$ be a Riccati foliation over a projective surface $S$.
One can first blow-up $S$ until we get a simple normal crossing polar divisor $(\mathcal H)_\infty$,
and then apply elementary transformations over irreducible components of $(\mathcal H)_\infty$
until we minimize order of poles  on each component. This is explained in \cite{RH}. Then all components
of  $(\mathcal H)_\infty$  satisfy assumptions of Lemma \ref{L:trivializa} and, outside of special $\pi$-fibers,
the Riccati foliation locally factors into one dimensional models of Proposition \ref{prop:BrunellaTurritin}
or {\bf Irreg${}^{\text{ram}}$}. In particular, we can define the irregularity divisor as $I=\sum_i k_i D_i$
where $D_i$ run over irreducible components of $(\mathcal H)_\infty$, and $k_i\in \frac{1}{2}\Z_{\ge0}$
is the irregularity index.

To get rid of special points, we use the fact that $\pi: P \to S$ is birationally trivial and choose a rational section
$\sigma : S \dashrightarrow P$ which is not invariant by $\mathcal H$. 
The foliation $\mathcal F= \sigma^* \mathcal H$ is transversely projective; indeed, after birational
trivialization $z:P\to\mathbb P^1$ of the bundle, we may assume $\sigma$ given by $z=0$:
the foliation $\mathcal H$ defined by $dz+\omega_0+z\omega_1 +z^2\omega_2=0$
and $(\omega_0,\omega_1,\omega_2)$ is a projective triple for $\mathcal F$.
Now, apply Seidenberg's Theorem:
maybe blowing-up $S$, we can now assume that $\mathcal F$ has only reduced singular points.
Following the classification of Berthier and the third author \cite{BerthierTouzet,FredTransvProj},
reduced singular points of transversely projective foliations fall into one of the following types:
\begin{enumerate}
	\item\label{Case1stIntegral} $\mathcal F$ admits the holomorphic first integral:
	\begin{itemize}
		\item {\bf First integral:} $\omega=d(x^py^q)$ with $p,q\in\mathbb Z_{>0}$;
	\end{itemize}
	\item\label{CaseClosed1Form} $\mathcal F$ is defined by a closed $1$-form $\omega$ (but without first integral):
	\begin{itemize}
		\item {\bf Linear:} $\omega=\frac{dx}{x}+\lambda\frac{dy}{y}$ with $\lambda\in\C-\mathbb Q$;
		\item {\bf Saddle-node:} $\omega=\frac{dy}{y}+\frac{dx}{x^{k+1}}+\lambda\frac{dx}{x}$ with $\lambda\in\C$;
		\item {\bf Resonant saddle:} $\omega=\frac{dy}{y}+\frac{df}{f^{k+1}}+\lambda\frac{df}{f}$ with $f=x^py^q$ 
		and $\lambda\in\C$;
	\end{itemize}
	\item\label{CasePullBackRiccati} $\mathcal F$ is the pull-back of a singular point of a Riccati foliation by a ramified cover:
	\begin{itemize}
		\item {\bf Riccati saddle-node:} $\omega=dy+ \left(b(x)y^2+(\frac{1}{x^{k+1}}+\frac{\lambda}{x})y+c(x)\right)dx$;
		\item {\bf Bernoulli saddle-node:} $\omega=dz+ \left(b(x)z^2+(\frac{1}{x^{k+1}}+\frac{\lambda}{x})z\right)dx$ with $z=y^{\nu}$;
		\item {\bf Resonant saddle:} $\omega=dz+ \left(b(f)z^2+(\frac{1}{f^{k+1}}+\frac{\lambda}{f})z+c(f)\right)df$ with $f=x^py^q$ and $z=y^{\nu}$;
	\end{itemize}
\end{enumerate}

In case (\ref{Case1stIntegral}), the local version of Corollary \ref{CorFclosedHclosed} (see Remark \ref{RmFclosedHclosedLocal})
tells us that $\mathcal H$ factors through $f$, and we can reduce it to the models given by Proposition \ref{prop:BrunellaTurritin}.

In case (\ref{CaseClosed1Form}), where $\mathcal F$ is defined by a closed $1$-form $\omega$, we can
similarly reduce $\mathcal H$ to either $\Omega=dz+\omega$, or $\frac{dz}{z}+\lambda\omega$ with $\lambda\in\C^*$.

Finally, in case (\ref{CasePullBackRiccati}), we again see that $\mathcal H$ factors through $f=x$ or $f=x^py^q$,
reducing to the model 
$\Omega=dz+ \left(b(f)z^2+(\frac{1}{f^{k+1}}+\frac{\lambda}{f})z+c(f)\right)df$ and $\mathcal F$ is defined
by $z=y$ or $z=y^\nu$.

We summarize all the possibilities studied above in the next result. 

\begin{thm}\label{THM:ReductionRiccatiRamified}
	Let $ (\pi: P \to S,\mathcal H)$ be a Riccati foliation over a projective surface $S$,
	with $P$ a birationally trivial bundle.
	There exists a birational morphism $\phi:\tilde S\to S$ (with $\tilde S$ smooth) 
	and a birational bundle modification of the pull-back bundle $\phi^*P$ such that
	the pull-back Riccati foliation has normal crossing divisor $D$, and is locally defined
	over any point $p\in \vert D\vert$ by a Riccati $1$-form having one of the following types:
	\begin{enumerate}
		\item {\bf Log${}^{\mathbb G_m}$}: $\frac{\Omega}{z}=\frac{dz}{z}+\lambda_x\frac{dx}{x}$, or $\frac{\Omega}{z}=\frac{dz}{z}+\lambda_x\frac{dx}{x}+\lambda_y\frac{dy}{y}$;
		\item {\bf Log${}^{\mathbb G_a}$}: $\Omega=dz+\frac{dx}{x}$, or 
		$\Omega=dz+\frac{dx}{x}+\lambda\frac{dy}{y}$;
		\item {\bf Irreg${}^{0}$}: $\frac{\Omega}{z}=\frac{dz}{z}+\frac{df}{f^{k+1}}+\lambda_x\frac{dx}{x}+\lambda_y\frac{dy}{y}$,
		where $f=x$ or $x^py^q$;
		\item {\bf Irreg}:  $\Omega=dz+\left(\frac{df}{f^{k+1}}+\lambda\frac{df}{f}\right)z+\left(b(f)z^2+c(f)\right) df$, where $f=x$ or $x^py^q$;
		\item {\bf Irreg${}^{\text{ram}}$}: $\Omega=dz+(z^2+x^{\epsilon_1}y^{\epsilon_2}\phi(f))\frac{df}{x^{\tilde p}y^{\tilde q}f^k}-\left(\tilde p\frac{dx}{x}+\tilde q\frac{dy}{y}+k\frac{df}{f}\right)$ with $f$ and $\phi$ holomorphic, $\phi(0)\neq0$, $\epsilon_1,\epsilon_2=0,1$ and $(p,q)=(2\tilde p-\epsilon_1,2\tilde q-\epsilon_2)$.
	\end{enumerate}
\end{thm}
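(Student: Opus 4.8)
The plan is to assemble the case analysis carried out just before the statement into a single reduction result, organizing it as an induction on blow-ups in two stages.

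\emph{Stage 1 (generic normal form).} First I would blow up $S$ finitely many times so that $(\mathcal H)_\infty$ is simple normal crossing, and then perform elementary transformations of the $\P^1$-bundle along each polar component in order to minimize the pole order along it; this is exactly the procedure of \cite{RH}. After this step every component of $(\mathcal H)_\infty$ satisfies the hypothesis of Lemma~\ref{L:trivializa}, so away from the finitely many \emph{special} $\pi$-fibres (those over singular points of $(\mathcal H)_\infty$ and over turning points) the Riccati foliation locally factors through a disc; Proposition~\ref{prop:BrunellaTurritin}, together with Remark~\ref{rem:IrregRam}, then puts it near a generic point of every component into one of the one-dimensional models $\mathrm{Log}^{\mathbb G_m}$, $\mathrm{Log}^{\mathbb G_a}$, $\mathrm{Irreg}$ or $\mathrm{Irreg}^{\mathrm{ram}}$, which are the one-variable specializations of the forms (1)--(5).

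\emph{Stage 2 (special points).} To obtain the listed models at the special points I would use that $\pi\colon P\to S$ is birationally trivial: choose a rational section $\sigma\colon S\dashrightarrow P$ not tangent to $\mathcal H$ and set $\mathcal F=\sigma^*\mathcal H$, a transversely projective foliation on $S$. By Seidenberg's theorem, after further blow-ups $\mathcal F$ has only reduced singularities, and since $\mathcal F$ is transversely projective the Berthier--Touzet classification \cite{BerthierTouzet,FredTransvProj} of reduced singular points applies, splitting them into: (i) a holomorphic first integral $x^p y^q$; (ii) a closed $1$-form (linear, saddle-node, or resonant saddle); (iii) a ramified pull-back of a singular point of a one-dimensional Riccati foliation. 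In case (i) the local analogue of Corollary~\ref{CorFclosedHclosed} shows that $\mathcal H$ factors through $f=x^p y^q$, reducing it again to the one-dimensional models of Proposition~\ref{prop:BrunellaTurritin}; in case (ii) the same corollary gives that $\mathcal H$ is itself defined by a closed $1$-form, of type $dz+\omega$ or $\frac{dz}{z}+\lambda\omega$, which unwinds to $\mathrm{Log}^{\mathbb G_a}$, $\mathrm{Log}^{\mathbb G_m}$ or $\mathrm{Irreg}^0$; in case (iii) $\mathcal H$ factors through $f=x$ or $f=x^p y^q$ and one reads off $\mathrm{Irreg}$, or, in the Bernoulli and ramified resonant sub-cases where $z=y^\nu$, the model $\mathrm{Irreg}^{\mathrm{ram}}$, the exponent relation $(p,q)=(2\tilde p-\epsilon_1,2\tilde q-\epsilon_2)$ arising precisely from feeding a monomial change of variables and the ramification $z=y^\nu$ into the nilpotent one-dimensional normal form of Remark~\ref{rem:IrregRam}.

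\emph{Main obstacle.} The delicate point is the interaction of the two stages: the blow-ups and bundle modifications used to resolve $\mathcal F$ also act on $\mathcal H$, so one must verify that the pole-order minimality from Stage~1 is preserved (or can be cheaply re-established) and that no new special fibre carrying a local model outside the list (1)--(5) is created. Concretely I expect the bulk of the effort to lie in checking that, after each elementary blow-up needed to reduce $\mathcal F$, the transformed polar components of $\mathcal H$ still fall into one of the five types, with the discrete invariants (the index $k\ge 1$ and the residue classes) behaving consistently; for this one combines the explicit shapes in the Berthier--Touzet list with the fact that $\mathcal H$ is determined near each polar component by its transverse type (cf.\ Lemma~\ref{L:trivializa} and Section~\ref{sec:ClosedFormStokes}) and by the auxiliary section $\sigma$.
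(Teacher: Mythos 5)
Your proposal follows essentially the same two-stage argument as the paper: first the reduction of \cite{RH} to a simple normal crossing polar divisor with minimal pole orders and the one-dimensional models of Proposition~\ref{prop:BrunellaTurritin} and Remark~\ref{rem:IrregRam} at generic points, then the auxiliary transversely projective foliation $\mathcal F=\sigma^*\mathcal H$, Seidenberg's theorem, and the Berthier--Touzet classification at the special points. The ``interaction'' issue you flag is exactly what the paper disposes of in the remark following the statement, by observing that the meromorphic gauge reduction can be performed globally along each irreducible component of the polar divisor via elementary transformations along the non-vertical singular section.
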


We note that the meromorphic gauge reduction process of Proposition \ref{prop:BrunellaTurritin}
which is {\it a priori} local can be done globally along irreducible components of the polar divisor.
Indeed, it can be checked on the proof of Proposition \ref{prop:BrunellaTurritin} that when the polar divisor 
is not minimal along an irreducible component $D_i$, then the non vertical part of the singular set 
of $\mathcal H$ is a (smooth) section; after an elementary transformation along it (blowing-up the section 
and then contracting the strict transform of $\pi^{-1}(D_i)$), the polar order decreases.
After a finite number of steps, the polar order is minimal, and all local models reduce to those of 
Proposition \ref{prop:BrunellaTurritin} by local biholomorphic bundle trivialization.
For details, see the proof of the main result of \cite{RH}.

The last step towards Theorem \ref{THM:ReductionRiccati} consists in passing to use a ramified covering
in order to kill ramifications. Going back to the irregular divisor $I=\sum_{i}k_iD_i$, we can choose
positive integers $m_i$ such that $m_ik_i=k$ for some fixed positive integer $k$ (a common multiple of all $k_i$'s).
We now consider a Kawamata covering (\cite[Proposition 4.1.12]{Lazarsfeld}): there exists a  ramified cover
$f:\tilde S\to S$ with $S$ smooth such that $f^*D_i=m_i\tilde D_i$ for some smooth reduced divisors $\tilde D_i$ on $\tilde S$,
and $\sum_iD_i$ has simple normal crossings. One easily check from models of Theorem \ref{THM:ReductionRiccatiRamified} that they are stable under ramified covers in variables $x$ or $y$,
and that ramifying at order $m_i$ along $D_i$ multiply the irregularity index by $m_i$.
Finally, after covering, the irregularity index is $k$ all along the irregular divisor. In particular, 
there are no more ramified components, and we have no more to consider the model {\bf Irreg${}^{\text{ram}}$}
of Theorem \ref{THM:ReductionRiccatiRamified}.\qed

\bibliographystyle{amsplain}

\end{document}